\newcommand{\co}{\mathfrak c}
\newcommand{\EE}{\mathcal E}
\newcommand{\EEE}{\mathbb E}
\numberwithin{equation}{section}
\newtheorem{theorem}{Theorem}[section]
\newtheorem{corollary}[theorem]{Corollary}
\newtheorem{lemma}[theorem]{Lemma}
\newtheorem{proposition}[theorem]{Proposition}
\newtheorem{definition}[theorem]{Definition}
\newtheorem{conjecture}[theorem]{Conjecture}
\DeclareMathOperator{\supp}{supp}
\DeclareMathOperator{\spann}{span}
\newcommand{\N}{\mathbb{N}}
\newcommand{\Z}{\mathbb{Z}}
\newcommand{\Q}{\mathbb{Q}}
\newcommand{\R}{\mathbb{R}}
\def\a{\alpha}
\def\b{\beta}
\def\d{\delta}
\def\G{\Gamma}
\def\l{\lambda}
\def\o{\omega}
\def\s{\sigma}
\def\z{\zeta}
\newcommand{\mc}{\mathcal}
\newcommand{\mf}{\mathfrak}
\DeclareMathOperator{\vol}{\mathbf{vol}}
\begin{document}
	\title[Isometric classification of the $L^{p}$-spaces of infinite Lebesgue measure]{Isometric classification of the $L^{p}$-spaces of infinite dimensional Lebesgue measure} 
	
	\author[D. L. Rodríguez-Vidanes]{Daniel L. Rodríguez-Vidanes}\thanks{}
	\address{Departamento de Matemática Aplicada a la Ingeniería Industrial \\
		E.T.S.I.D.I. \\
		Ronda de Valencia 3 \\
		Universidad Politécnica de Madrid \\
		Madrid, 28012, Spain.}
	\email{dl.rodriguez.vidanes@upm.es}
	
	\author[J. C. Sampedro]{Juan Carlos Sampedro} \thanks{The second author has been supported by the Research Grant PID2024--155890NB-I00 of the Spanish Ministry of Science and Innovation and by the Institute of Interdisciplinar Mathematics of Complutense University.}
	\address{Departamento de Matemática Aplicada a la Ingeniería Industrial \\
		E.T.S.I.D.I. \\
		Ronda de Valencia 3 \\
		Universidad Politécnica de Madrid \\
		Madrid, 28012, Spain.}
	\email{juancarlos.sampedro@upm.es}

\begin{abstract}
	We investigate the isometric structure of $L^{p}$-spaces for the infinite-dimensional Lebesgue measure $(\mathbb{R}^{\mathbb{N}},\mu)$. Under the continuum hypothesis (CH) we prove $L^{p}(\mu)\cong \ell^{p}(\mathfrak{c},L^{p}[0,1])$, where $\mf{c}$ denotes the cardinality of the continuum, and without CH we obtain an isometric, complemented copy of $\ell^{p}(\mathfrak{c},L^{p}[0,1])$ inside $L^{p}(\mu)$. In a general framework, we characterize precisely when $L^{p}(\nu)\cong \ell^{p}(\kappa,L^{p}[0,1])$ and classify all such isometries.
\end{abstract}

\keywords{$L^{p}$-spaces, Continuum Hypothesis, Lebesgue measure, Isometric Classification}
\subjclass[2010]{46B04 (primary), 28C20, 46G12 (secondary)}

\maketitle

\tableofcontents

\section{Introduction}

The isometric classification of $L^p$-spaces has been a central topic of research throughout the 20th and 21st centuries (see, for instance, \cite{EnSt,EnRo,La,HaRoSu,Fremlin,Fremlin2,Fremlin3,Fremlin4,Fremlin51,Fremlin52,LiPe,M,Pa,Ro,Ro2,Ro3,SP4}, among many others). A fundamental consequence of Maharam's theorem (see, for example, \cite[Th. 14, p. 130]{La}, \cite[Th. 332B]{Fremlin3}, or the original work \cite{M}) states that, for any measure space $(X, \Sigma, \nu)$, there exist two sets $\G, I$ and two families of cardinals $\tau_i$, $\kappa_i$, $i \in I$, such that the following isometric isomorphism holds:

\begin{equation} \label{Eqq}
	L^{p}(\nu)\cong\ell^{p}(\G)\oplus_{p}\left(\bigoplus_{i\in I}\ell^{p}(\tau_{i},L^{p}([0,1]^{\kappa_{i}}))\right)_{p}.
\end{equation}

Throughout this article, given a family $(X_{j})_{j\in J}$ of Banach spaces, we define the space
\[
\left(\bigoplus_{j\in J}X_{j}\right)_{p} := \left\{(x_{j})_{j\in J}\in \bigtimes_{j\in J}X_{j} \; : \; \sum_{j\in J} \|x_{j}\|_{X_{j}}^{p}<\infty\right\},
\]
with norm
\[
\|(x_{j})_{j\in J}\|_{\ell^{p}}:=\left(\sum_{j\in J} \|x_{j}\|_{X_{j}}^{p}\right)^{\frac{1}{p}}.
\]
In the case where $X_{j} = X$ for every $j \in J$, we denote $\ell^{p}(J,X) := \left(\bigoplus_{j\in J}X\right)_{p}$.

In this paper, we study the isometric structure of the $L^p$-spaces associated with an infinite-dimensional analogue of the Lebesgue measure constructed in \cite{RB2} and generalized in \cite{SP3}. To be more precise, we deal with the measure space $(\mathbb{R}^{\mathbb{N}}, \mathcal{B}_{\infty}, \mu)$, where $\mathcal{B}_{\infty}$ is the $\s$-algebra generated by the \textit{cylinder sets}
\begin{equation*}
	\left\{\bigtimes_{i=1}^{m}C_{i} \times \bigtimes_{i=m+1}^{\infty} \mathbb{R} \; : \; C_{i} \in \mathcal{B}, \ \forall i\in\{1,2,...,m\}, \ m\in\mathbb{N}\right\},
\end{equation*}
and $\mc{B}$ denotes the Borel $\s$-algebra of $\R$.
It is worth noting that $\mathcal{B}_{\infty}$ coincides with the Borel $\sigma$-algebra of $\mathbb{R}^{\mathbb{N}}$ when endowed with the product topology.

For completeness, we sketch the construction of the measure $\mu$.
By $\mathcal{F}(\mc{B},\l)$ we denote the set of \textit{finite rectangles} on $\R^{\N}$ defined by
\begin{equation*}
	\mathcal{F}(\mc{B},\l):=\left\{\bigtimes_{i\in\mathbb{N}}C_{i} \; : \; C_{i}\in\mc{B}, \; \forall i\in \N, \; \prod_{i\in\mathbb{N}}\l(C_{i})\in[0,\infty) \right\},
\end{equation*}
where $\l$ denotes the standard Lebesgue measure on $(\R,\mc{B})$; and $\vol$ for the \textit{volume} map $\vol: \mathcal{F}(\mc{B},\l)\to[0,\infty)$ given by 
\begin{equation*}
	\vol\left(\bigtimes_{i\in\mathbb{N}}C_{i}\right):=\prod_{i\in\mathbb{N}}\l(C_{i}).
\end{equation*}
We denote by $\mathcal P(\R^{\N})$ the set of all subsets of $\R^{\N}$.
The measure $\mu$ is defined as the restriction to $\mc{B}_{\infty}$ of the outer measure $\mu^{\ast}:\mathcal{P}(\R^{\N})\to [0,\infty]$ defined, for every $A\in\mathcal{P}(\R^{\N})$, by
\begin{equation}
	\label{JCS}
\mu^{\ast}(A):=\text{inf}\left\{ \sum_{n\in\mathbb{N}}\vol(\mathscr{C}_{n}) : \{\mathscr{C}_{n}\}_{n\in\mathbb{N}}\subset\mathcal{F}(\mc{B},\l)\text{ and } A\subset\bigcup_{n\in\mathbb{N}}\mathscr{C}_{n} \right\},
\end{equation}
where we set $\inf \varnothing =\infty$. Therefore, $\mu$ is a Borel translation-invariant measure on $\R^{\N}$ satisfying the identity
$$
\mu\left(\bigtimes_{i\in\N}C_{i}\right)=\prod_{i\in\N}\l(C_{i}),
$$
for each $\bigtimes_{i\in\N}C_{i}\in \mathcal{F}(\mc{B},\l)$.

The study of translation-invariant measures on $\mathbb{R}^{\mathbb{N}}$ has been a subject of interest in the literature. In \cite{RB}, Baker constructed for the first time a translation-invariant Borel measure $\mu_B$ on $(\mathbb{R}^{\mathbb{N}}, \mathcal{B}_{\infty})$ that behaves as an infinite-dimensional analogue of the Lebesgue measure, using rectangles of the form $\bigtimes_{i\in\mathbb{N}}(a_i,b_i)$. Later, in \cite{RB2}, Baker introduced the measure $\mu$ as described above. (In \cite{RB2}, Baker considered the completion of $\mu$ to obtain an infinite-dimensional analogue of the Fubini--Tonelli theorem.) In that work, Baker raised the question of whether the measure spaces corresponding to $\mu_B$ and $\mu$ coincide. However, in \cite{Pan}, Pantsulaia showed in Remark~2.2 that while $\mu_B$ is absolutely continuous with respect to $\mu$, it is not equivalent to it. In fact, a simple observation shows that $\mu(\{0,2\}^{\mathbb{N}}) = 0$ while $\mu_B(\{0,2\}^{\mathbb{N}}) = \infty$.

It is well known that infinite-dimensional locally convex topological vector spaces do not admit a nontrivial translation-invariant $\sigma$-finite Borel measure \cite[p. 143]{Y}. Consequently, $\mu$ is not $\sigma$-finite.  Moreover, it is known that $\mu_B$ is not localizable in the sense of Fremlin \cite[Def. 211G]{Fremlin2}, since it is not semi-finite (see \cite[Remark~5.2]{Pan}). In Appendix \ref{AP}, we prove that $\mu$ is neither localizable nor semi-finite.

For the measure space \( (\R^{\N},\mu) \), determining the family of cardinals involved in the isometric isomorphism \eqref{Eqq} remains an open problem. The first main result of this paper solves this problem under the validity of the continuum hypothesis (CH).

\begin{theorem}
	\label{Th8.1.2}
	Let \( 1\leq p <\infty \). Then, under CH, the following isometric isomorphism holds:  
	\begin{equation}
		\label{Eq}
	L^{p}(\mu)\cong \ell^{p}(\mf{c},L^{p}[0,1]).
	\end{equation}
\end{theorem}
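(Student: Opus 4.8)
The plan is to feed the general Maharam decomposition \eqref{Eqq} for $\nu=\mu$ into a sequence of reductions that collapse the right-hand side to a single homogeneous summand, and then to identify the multiplicity of that summand as $\mathfrak{c}$. First I would dispose of the atomic part: since the product measure on any unit cube $m+[0,1)^{\mathbb{N}}$ is non-atomic and every set of positive finite $\mu$-measure contains sub-rectangles of arbitrarily small positive volume, $\mu$ has no atoms, so $\Gamma=\varnothing$. Next I would show that only the countable Maharam type occurs, i.e.\ $\kappa_i=\aleph_0$ for all $i$. The key point is that $\mathcal{B}_\infty$ is the Borel $\sigma$-algebra of the Polish space $\mathbb{R}^{\mathbb{N}}$, hence countably generated; consequently the restriction of $\mu$ to any set of finite measure has a separable measure algebra. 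Since every $f\in L^p(\mu)$ is supported on a $\sigma$-finite set (the level sets $\{|f|>1/n\}$ have finite measure), no homogeneous band of uncountable type can appear, and \eqref{Eqq} reduces to $L^p(\mu)\cong\ell^p(\tau,L^p[0,1])$ for a single cardinal $\tau$, the total multiplicity of the separable part. It then remains to prove $\tau=\mathfrak{c}$.

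For the lower bound I would use the integer lattice. The cubes $\{m+[0,1)^{\mathbb{N}}\}_{m\in\mathbb{Z}^{\mathbb{N}}}$ are pairwise disjoint, each has $\mu$-measure $1$, and $\mu$ restricted to each is a separable non-atomic probability measure, so $L^p$ of each cube is isometric to $L^p[0,1]$. As $|\mathbb{Z}^{\mathbb{N}}|=\mathfrak{c}$, the band projection $f\mapsto\sum_{m}f\mathbf{1}_{m+[0,1)^{\mathbb{N}}}$ is a contractive projection whose range is isometric to $\ell^p(\mathfrak{c},L^p[0,1])$. This already yields the CH-free statement that $\ell^p(\mathfrak{c},L^p[0,1])$ embeds as a complemented isometric subspace, and gives $\tau\geq\mathfrak{c}$. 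The reverse inequality $\tau\leq\mathfrak{c}$ is cheap in isolation: $\mathbb{R}^{\mathbb{N}}$ carries only $\mathfrak{c}$ Borel sets, so the cellularity and the density character of $L^p(\mu)$ are at most $\mathfrak{c}$.

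The hard part, and the reason CH is invoked, is that the cube family does \emph{not} exhaust $L^p(\mu)$, so the two bounds do not immediately meet in ZFC. Concretely, a finite rectangle $\bigtimes_i D_i$ with $\lambda(D_i)\to 1$ but with the $D_i$ ``spread'' (for instance $D_i=[0,\tfrac12)\cup[N_i,N_i+\tfrac12)$ with $N_i\to\infty$) has positive measure yet meets every cube in a $\mu$-null set; its indicator is therefore annihilated by the cube band projection while being a nonzero element of $L^p(\mu)$. This is exactly the manifestation of the failure of localizability and semi-finiteness of $\mu$ noted in Appendix \ref{AP}, and it means that the passage from a complemented copy to an isometric identification is genuinely nontrivial. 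To close the gap I would build, by transfinite recursion, a maximal family $\{A_\xi\}_{\xi<\mathfrak{c}}$ of pairwise disjoint sets of positive finite measure whose associated bands are separable and together exhaust $L^p(\mu)$, absorbing the diffuse functions as well as the cube-concentrated ones. Here CH enters as $\mathfrak{c}=\aleph_1$: fixing an enumeration of length $\aleph_1$ of a norm-dense family of finite-measure sets (there are only $\mathfrak{c}$ finite unions of rectangles), one threads the recursion so that at each countable stage the previously chosen pieces plus a countable ``catch-up'' capture the next set, and $\aleph_1$ steps then suffice because every $L^p$ function lives on a $\sigma$-finite, hence separable, support. I expect this exhaustion step to be the crux: without $\mathfrak{c}=\aleph_1$ the recursion need not terminate with all of $L^p(\mu)$ accounted for, which is precisely why, absent CH, one retains only the complemented copy rather than the full isometry \eqref{Eq}.
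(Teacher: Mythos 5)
Your route is genuinely different from the paper's, which never touches Maharam's theorem in its proof of this statement: under CH the paper enumerates \emph{all} finite rectangles as $\{R_{\xi} : \xi<\omega_{1}\}$, forms the disjoint sets $B_{\xi}=R_{\xi}\setminus\bigcup_{\zeta<\xi}R_{\zeta}$, splits each $B_{\xi}$ into countably many finite rectangles to get a partition $\{C_{\xi} : \xi<\omega_{1}\}$ (Proposition \ref{Pr2.1}), and then uses that every $f\in L^{p}(\mu)$ has $\sigma$-finite support, so that by cofinality of $\omega_{1}$ the support sits inside a countable initial segment of the partition (Proposition \ref{Le5.1}); the restriction map $f\mapsto (f\restriction C_{\xi})_{\xi<\omega_{1}}$ is then an isometric isomorphism onto $\left(\bigoplus_{\xi<\omega_{1}}L^{p}(C_{\xi})\right)_{p}$, and each $L^{p}(C_{\xi})\cong L^{p}[0,1]$ by separability and pure nonatomicity. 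Your second paragraph (the lattice cubes giving a complemented isometric copy of $\ell^{p}(\mathfrak{c},L^{p}[0,1])$) reproduces the paper's Section \ref{Se3}, not its proof of Theorem \ref{Th8.1.2}. The deeper problem is that your proposal is internally inconsistent, and the inconsistency sits exactly on the step carrying all the difficulty: your first paragraph claims that \eqref{Eqq} collapses for $\nu=\mu$ to $L^{p}(\mu)\cong\ell^{p}(\tau,L^{p}[0,1])$ for a \emph{single} cardinal $\tau$, and your second paragraph proves $\mathfrak{c}\le\tau\le\mathfrak{c}$. If both stood, the theorem would be proved in ZFC and CH would be irrelevant; yet your third paragraph asserts that ``the two bounds do not immediately meet in ZFC'' and launches a CH recursion. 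You cannot have it both ways.

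The unjustified step is the collapse of \eqref{Eqq} for this particular $\mu$. The forms of Maharam's theorem underlying \eqref{Eqq} (e.g.\ Fremlin's 332B) are theorems about localizable measure algebras, and $\mu$ is neither localizable nor even semi-finite (Theorem \ref{A1}, Proposition \ref{NF}); so for this pathological measure you must \emph{prove}, not merely cite, that $L^{p}(\mu)$ is lattice-isometric to a single $\ell^{p}$-sum of copies of $L^{p}[0,1]$ --- including that no uncountable homogeneous type occurs and that the identification preserves enough disjointness structure (Lamperti, $p\neq 2$) for your separable-principal-band argument to transfer. Be aware of the stakes: by the paper's Theorem \ref{TP3}, the isometry $L^{p}(\mu)\cong\ell^{p}(\mathfrak{c},L^{p}[0,1])$ for $p\neq 2$ forces $(\R^{\N},\mu)$ to admit a $\mathfrak{c}$-separable envelope, so a correct ZFC proof along your lines would refute the paper's Conjecture \ref{C4}; indeed a rigorous version of your collapse amounts to exhibiting, in ZFC, a family of $\sigma$-finite supports satisfying the additivity property \eqref{P} of Definition \ref{De4.2} --- precisely what the authors believe cannot be done in every model of ZFC. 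As written, your proposal neither supplies that argument nor confronts this contradiction, and the third paragraph's recursion sketch never explains how maximality or ``catch-up'' steps would guarantee \eqref{P} for \emph{every} finite-measure set, which is the whole content of the problem; so it does not constitute a proof. A further factual slip: your claim that ``$\mu$ has no atoms'' is false --- hyperplanes are atoms of infinite measure (Proposition \ref{HY}); what is true, and what you actually need to kill the $\ell^{p}(\Gamma)$ summand, is that $\mu$ is relatively nonatomic, i.e.\ has no $\sigma$-finite atoms (Corollary \ref{cor:purely-nonatomic-sigma-finite}).
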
  

In the general case in which CH is not guaranteed, we prove that, $L^{p}(\mu)$ admits at least an isometric complemented copy of $\ell^{p}(\mf{c},L^{p}[0,1])$ via a cube decomposition of $\R^{\N}$. We believe (and therefore, we conjecture) that the isometric isomorphism \eqref{Eq} is independent of ZFC. More precisely, we conjecture that there is a model of ZFC in which the isometric isomorphism \eqref{Eq} does not hold.

Throughout the article we use standard notation and terminology from set theory \cite{Ci,Je} and measure theory \cite{C,F}, unless stated otherwise. For latter considerations, we recall that in a measure space \((X,\Sigma,\nu)\), a measurable set \(A\in\Sigma\) with \(\nu(A)>0\) is an \emph{atom} if every measurable \(B\subset A\) satisfies \(\nu(B)\in\{0,\nu(A)\}\).
The space \((X,\Sigma,\nu)\) is \emph{purely nonatomic} if it has no atoms (equivalently, no measurable subset of positive measure is an atom).
We say that \((X,\Sigma,\nu)\) is \emph{relatively nonatomic} if for every \(\sigma\)-finite measurable subset \(E\in\Sigma\), the restricted measure space \((E,\Sigma|_{E}, \nu)\) is purely nonatomic, where $\Sigma|_{E}$ is the restriction of $\Sigma$ to $E$, i.e., $\Sigma|_{E}:=\{B\cap E : B\in \Sigma\}$; equivalently, no \(\sigma\)-finite measurable subset of positive measure is an atom.
The latter notion is new and is introduced in this article.

\par The paper is organized as follows. In Section \ref{Se2}, we prove Theorem \ref{Th8.1.2} by carefully choosing an appropriate partition of $\R^{\N}$ whose existence strongly depends on CH. Section \ref{Se3} is devoted to study the general case, that is, when CH is not guaranteed. We prove that $L^{p}(\mu)$ contains an isometric and complemented copy of $\ell^{p}(\mf{c},L^{p}[0,1])$. This analysis is performed via a cube decomposition of $\R^{\N}$ that splits $L^{p}(\mu)$ into two complemented subspaces $G^{p}$ and $B^{p}$. We prove that the space $G^{p}$ is isometrically isomorphic to $\ell^{p}(\mf{c},L^{p}[0,1])$ while $B^{p}$ contains an isometric and complemented copy of $\ell^{p}(\mf{c},L^{p}[0,1])$. The final Section \ref{Se4} characterizes when $L^{p}(\nu)$, for some relatively nonatomic measure $\nu$, is isometrically isomorphic to $\ell^{p}(\kappa, L^{p}[0,1])$ for some cardinal number $\kappa$.
Moreover, under natural hypotheses, when these spaces are isometrically isomorphic we describe all isometric isomorphisms between them. We conclude by applying these results to conjecture that the isometric isomorphism $L^{p}(\mu)\cong \ell^{p}(\co,L^{p}[0,1])$ is independent of ZFC. Finally, we include Appendix A in which we prove some properties and aspects of the measure $\mu$ not present in the literature.

\section{Isometry under the Continuum Hypothesis}\label{Se2}

In this section we prove that, under the Continuum Hypothesis, CH, $L^{p}(\mu)$ is isometrically isomorphic to $\ell^{p}(\mf{c},L^{p}[0,1])$.  Therefore, throughout this section we work under the validity of CH. 
\par Under this assumption, since $|\mc{F}(\mc{B},\l)|=\mf{c}$, we can consider 
an enumeration $\left\{R_{\xi} : \xi < \o_{1}\right\}$ of $\mc{F}(\mc{B},\l)$. For each $\xi < \o_{1}$, we denote
\begin{equation}
	\label{Eq2}
	B_{\xi}:=R_{\xi} \setminus \bigcup_{\z<\xi} R_{\z}.
\end{equation}
Note that each $B_{\xi}$ is Borel as $\bigcup_{\z<\xi}R_{\z}$ is a countable union and $\mu(B_{\xi})<\infty$ as $B_{\xi}\subset R_{\xi}$. Moreover, for distinct $\xi_{1},\xi_{2}<\o_{1}$, $B_{\xi_{1}}\cap B_{\xi_{2}}=\varnothing$. Therefore,
$$
\R^{\N}= \bigsqcup_{\xi<\o_{1}}B_{\xi}.
$$
Throughout this article, the notation $ \sqcup$ stands for the disjoint union.
\par The first result of this section yields a partition of $\R^{\N}$ into finite rectangles with a property needed for the proof of the section’s main result.

\begin{proposition}
	\label{Pr2.1}
	There exists a partition $\mathcal F=\{C_\xi\colon \xi<\o_{1}\}\subset\mc{F}(\mc{B},\l)$ of $\R^\N$ into continuum many finite rectangles such that for every Borel $B\in\mc{B}_{\infty}$ of finite measure
	the set $\mathcal F(B):=\{\xi<\o_{1}\colon B\cap C_\xi\neq \varnothing\}$ is countable. 
\end{proposition}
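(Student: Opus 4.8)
The plan is to build the partition by transfinite recursion of length $\o_{1}$, exploiting that under CH both $\mc F(\mc B,\l)$ and $\R^{\N}$ have size $\mf c=\aleph_1$, so that every proper initial segment of $\o_{1}$ is countable. Before starting, two reductions simplify the target. First, it suffices to arrange that \emph{every finite rectangle} $R\in\mc F(\mc B,\l)$ meets only countably many $C_\xi$: indeed, if $B\in\mc B_{\infty}$ has $\mu(B)<\infty$, then by the very definition \eqref{JCS} of $\mu^{\ast}$ there is a countable family $\{\mathscr C_n\}\subset\mc F(\mc B,\l)$ with $B\subset\bigcup_n\mathscr C_n$, whence $\mc F(B)\subset\bigcup_n\mc F(\mathscr C_n)$ is a countable union of countable sets. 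Second, for a fixed finite rectangle $R$ the intersections $R\cap C_\xi$ of \emph{positive} measure are automatically countable, since $\sum_\xi\mu(R\cap C_\xi)=\mu(R)<\infty$; thus the whole difficulty lies in controlling the nonempty intersections of measure zero (the ``straddling'' of $R$ across pieces). That this is a genuine issue is shown by the naive partition of $\R^{\N}$ into unit cubes $\bigtimes_i[n_i,n_i+1)$: the finite rectangle $\bigtimes_i[-\tfrac12,\tfrac12)$ meets continuum many of them, each in measure zero.

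The engine of the recursion is the following decomposition, which I would prove first. For finite rectangles $R=\bigtimes_iD_i$ and $R'=\bigtimes_iD_i'$ one has the \emph{first-escape} identity
\[
R\setminus R'=\bigsqcup_{i\in\N}\Big(\bigtimes_{j<i}(D_j\cap D_j')\Big)\times(D_i\setminus D_i')\times\bigtimes_{j>i}D_j,
\]
where $x$ is placed in the $i$-th summand according to the least coordinate at which it leaves $R'$; since each summand changes $R$ in only finitely many coordinates its volume product still converges, so $R\setminus R'$ is a countable disjoint union of finite rectangles. Iterating over \emph{finitely} many subtractions and disjointifying, I obtain the key fact $(\star)$: \emph{any countable union of finite rectangles is a countable disjoint union of finite rectangles}, by writing $\bigcup_kR^{(k)}=\bigsqcup_k\big(R^{(k)}\setminus\bigcup_{j<k}R^{(j)}\big)$, each term being a finite rectangle minus \emph{finitely} many finite rectangles.

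With $(\star)$ in hand I would run the recursion as follows. Fix the enumeration $\{R_\xi:\xi<\o_{1}\}$ and build an increasing chain of ``covered'' regions $U_\xi=\bigsqcup C_\zeta$, each (for $\xi<\o_{1}$) a countable disjoint union of finite-rectangle pieces, with $\bigcup_{\xi<\o_{1}}U_\xi=\R^{\N}$ (possible since $\bigcup_\xi R_\xi=\R^{\N}$). The design goal is that each $R_\eta$ be \emph{finalized} at a countable stage, i.e.\ $R_\eta\subset U_{\gamma}$ for some $\gamma=\gamma(\eta)<\o_{1}$; once this holds, every later piece lies in $U_{\gamma}^{\,c}$ and is therefore disjoint from $R_\eta$, so $R_\eta$ meets only the countably many pieces contained in $U_{\gamma}$. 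Feeding this back into the two reductions above yields the countability of $\mc F(B)$ for all finite-measure Borel $B$, which is exactly the assertion of the Proposition.

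The hard part—and the place where CH is genuinely used—is the finalization step, because one cannot simply ``fill in'' the newly exposed region $R_\eta\setminus U_\eta$ by countably many rectangles while leaving the earlier pieces fixed: a finite rectangle minus \emph{countably} many finite rectangles need not be a countable union of finite rectangles. The sharpest obstruction is the anti-diagonal: inside $[0,1)^{\N}$ the off-diagonal $\{x_1\neq x_2\}$ is a countable union of rectangles $\bigcup_{q\in\Q}\big([0,q)\times[q,1)\times\bigtimes_{i\ge3}[0,1)\big)\cup(\text{symmetric})$, yet its complement $\{x_1=x_2\}$ contains no rectangle of more than one point and so is \emph{not} a countable union of finite rectangles. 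Thus the two naive strategies fail in opposite ways: keeping old pieces fixed forces an impossible \emph{countable} subtraction, while re-partitioning $U_\eta\cup R_\eta$ from scratch via $(\star)$ at every stage never lets pieces stabilize, and a single region could be re-partitioned cofinally often, destroying the count. The resolution I would pursue is to refine only the (countably many) pieces that actually meet $R_\eta$, leaving the rest untouched, and to verify by a closing-off/reflection argument along the recursion that each piece is refined only below some countable stage, so that each $R_\eta$ interacts with pieces of only countably many ``ranks''. Organizing this bookkeeping so that finalized rectangles remain met by only countably many pieces is the crux of the argument.
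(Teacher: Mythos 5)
Your proposal is not a proof: the preparatory material is sound (both reductions are correct, the first-escape identity is correct, and the fact $(\star)$ that a countable union of finite rectangles is a countable \emph{disjoint} union of finite rectangles is correct, since disjointification only ever subtracts finitely many rectangles from each term), but the construction itself is never carried out, and the missing step is precisely the hard one. In your recursion the successor step requires covering $R_\eta\setminus U_\eta$ by countably many disjoint finite rectangles avoiding $U_\eta$, and your own diagonal example shows this can be outright impossible rather than merely delicate: if $U_\eta$ comes to contain the rectangles $A_q=[0,q)\times[q,1)\times[0,1)\times[0,1)\times\cdots$ and their mirror images for all rational $q\in(0,1)$ while missing $D=\{x\in[0,1)^{\N}:x_1=x_2\}$ (this state really occurs if the enumeration lists the $A_q$'s and their mirrors before the cube), then any finite rectangle $\bigtimes_i E_i$ that meets $D$ and is disjoint from $U_\eta$ must satisfy $E_1\cap[0,1)=E_2\cap[0,1)=\{a\}$ for a single point $a$ (if $b\neq a$ both lay in $E_1\cap[0,1)$, a rational strictly between $a$ and $b$ would force a nonempty intersection with some $A_q$ or its mirror), so countably many such pieces cover only countably many diagonal values and no admissible continuation of the partial partition exists. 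Your suggested remedy---refining the old pieces that meet $R_\eta$---does not address this at all, since refining pieces inside $U_\eta$ cannot cover the newly exposed set $R_\eta\setminus U_\eta$; and the ``closing-off/reflection argument'' that is supposed to stabilize the pieces is only named, never defined. What is missing is the actual mechanism (a provably safe processing order, or a rule for covering additional, not-yet-listed null sets at each stage) that prevents every such limit configuration; that mechanism is the entire content of the proposition.

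For comparison, the paper's own proof runs the same route non-recursively and founders, silently, on exactly the rock you identified. It sets $B_\xi=R_\xi\setminus\bigcup_{\zeta<\xi}R_\zeta$ as in \eqref{Eq2} and asserts in \eqref{e2.2} that $B_\xi=\bigsqcup_{n}C_{\xi,n}$ with $C_{\xi,n}=R_\xi\cap\bigcap_{\zeta<\xi}F_{\zeta,n}$, where $R_\zeta^{c}=\bigsqcup_n F_{\zeta,n}$ is the first-escape decomposition. This interchanges $\bigcap_{\zeta<\xi}$ with $\bigsqcup_n$, which is invalid: a point of $B_\xi$ escapes each $R_\zeta$ at a first coordinate $n(\zeta)$ that depends on $\zeta$, so the correct identity is a disjoint union over choice functions $\phi\colon\xi\to\N$, of which there are continuum many. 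Already for three rectangles the displayed identity fails: with $R_0=[0,1)^{\N}$, $R_1=[2,3)\times[0,1)\times[0,1)\times\cdots$, $R_2=[0,4)\times[0,4)\times[0,1)\times\cdots$, the point $(\tfrac12,2,\tfrac12,\tfrac12,\dots)$ lies in $B_2$ but escapes $R_0$ first at coordinate $2$ and $R_1$ first at coordinate $1$, hence lies in no $C_{2,n}$. Moreover, your diagonal obstruction shows the defect is not reparable by re-indexing: for an enumeration listing the off-diagonal rectangles before the cube, $B_\xi=D$, which is not a countable union of finite rectangles at all. So although your proposal does not prove the proposition, its central observation is correct and in fact exposes a genuine gap in the paper's argument; both the paper's proof and your plan need the same missing ingredient, namely a controlled choice of the enumeration (or an anticipation device along the recursion), to become complete.
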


\begin{proof}
	By \eqref{Eq2}, we deduce that
	$$
	B_{\xi}=R_{\xi}\cap \bigcap_{\z<\xi}R^{c}_{\z}=R_{\xi}\cap \bigcap_{\z<\xi} \bigsqcup_{n\in\N}F_{\z,n},
	$$
	for some
	$$
	F_{\z,n}\in\mc{R}(\mc{B}):=\left\{\bigtimes_{i\in\N}C_{i} \; : \; C_{i}\in\mc{B}, \; \forall i\in\N\right\}.
	$$ 
	For the last equality, it is convenient to recall the following set theoretical identity: if $\mathscr{C}=\bigtimes_{i\in\mathbb{N}}C_{i}\in\mathcal R(\mathcal B)$, then
	\begin{equation*}
		\mathscr{C}^{c}= \bigsqcup_{n\in\mathbb{N}}\left(\bigtimes_{i=1}^{n-1}C_{i}\times C_{n}^{c}\times\bigtimes_{i=n+1}^{\infty}\R\right).
	\end{equation*}
	Therefore, we deduce that
	\begin{equation}
		\label{e2.2}
	B_{\xi}= \bigsqcup_{n\in\N}C_{\xi,n},
	\end{equation}
	where
	$$
	C_{\xi,n}:=R_{\xi}\cap \bigcap_{\z<\xi}F_{\z,n}\in\mc{F}(\mc{B},\l).
	$$
	Let  $\mathcal F=\{C_\xi\colon \xi<\o_{1}\}\subset\mc{F}(\mc{B},\l)$ be a enumeration of $\{C_{\xi,n} : \xi < \o_{1}, \; n\in\N\}$. Then, clearly $\mathcal F$ is a partition of $\R^{\N}$.
	
	Take $B\in\mc{B}_{\infty}$ with finite measure. Then, by the definition of the measure $\mu$, there exists a countable subset $\{R_{\xi_n}\}_{n<\o}\subset \mc{F}(\mc{B},\l)$ such that
	$$
	B\subset \bigcup_{n<\o}R_{\xi_n}=\bigcup_{n<\o}\bigcup_{\eta\leq\xi_n} B_\eta.
	$$
	Since $\{\xi<\o_{1} : B\cap B_\xi\neq \varnothing\}$ is countable, necessarily $\mathcal F(B)=\{\xi<\o_{1}\colon B\cap C_\xi\neq \varnothing\}$ is also countable. The proof is concluded.
\end{proof}

The next result is the key to prove the required isometric isomorphism.

\begin{proposition}
	\label{Le5.1}
	Let $1\leq p<\infty$. Then, for every $f\in L^{p}(\mu)$, it holds that
	$$
	\int_{\R^{\N}}|f|^{p} \; d\mu = \sum_{\xi<\o_{1}}\int_{C_{\xi}}|f|^{p}\; d\mu,
	$$
	where $\{C_\xi\colon \xi<\o_{1}\}$ are the sets introduced in Proposition \ref{Pr2.1}.
\end{proposition}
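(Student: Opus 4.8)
The plan is to reduce the uncountable sum on the right-hand side to a genuinely countable one and then invoke ordinary countable additivity of the integral. The key point is that, although the index set $\o_{1}$ is uncountable and $\mu$ is far from $\sigma$-finite, any fixed $f\in L^{p}(\mu)$ can only meet countably many pieces of the partition $\mathcal F$, which is exactly the property established in Proposition \ref{Pr2.1}.

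First I would record that the set $S:=\{|f|>0\}$ is $\sigma$-finite. Writing $S_{n}:=\{|f|\geq 1/n\}$, Chebyshev's inequality yields $\mu(S_{n})\leq n^{p}\int_{\R^{\N}}|f|^{p}\,d\mu<\infty$, so each $S_{n}$ is a Borel set of finite measure and $S=\bigcup_{n\in\N}S_{n}$. Next I would apply Proposition \ref{Pr2.1} to each $S_{n}$: the index set $\mathcal F(S_{n})=\{\xi<\o_{1}\colon S_{n}\cap C_{\xi}\neq\varnothing\}$ is countable. Since $S\cap C_{\xi}\neq\varnothing$ precisely when $S_{n}\cap C_{\xi}\neq\varnothing$ for some $n$, the set
$$
\Xi:=\{\xi<\o_{1}\colon S\cap C_{\xi}\neq\varnothing\}=\bigcup_{n\in\N}\mathcal F(S_{n})
$$
is a countable union of countable sets, hence countable.

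Then I would observe that for every $\xi\notin\Xi$ the rectangle $C_{\xi}$ is disjoint from $S$, so $f\equiv 0$ on $C_{\xi}$ and $\int_{C_{\xi}}|f|^{p}\,d\mu=0$; thus the right-hand side collapses to $\sum_{\xi\in\Xi}\int_{C_{\xi}}|f|^{p}\,d\mu$, a countable sum. Moreover, because $\{C_{\xi}\colon\xi<\o_{1}\}$ partitions $\R^{\N}$, we have $S\subseteq\bigsqcup_{\xi\in\Xi}C_{\xi}$, so $f$ vanishes off this countable disjoint union. Applying the countable additivity of the integral of the nonnegative measurable function $|f|^{p}$ over $\bigsqcup_{\xi\in\Xi}C_{\xi}$ (valid for an arbitrary measure, requiring no $\sigma$-finiteness) gives
$$
\int_{\R^{\N}}|f|^{p}\,d\mu=\int_{\bigsqcup_{\xi\in\Xi}C_{\xi}}|f|^{p}\,d\mu=\sum_{\xi\in\Xi}\int_{C_{\xi}}|f|^{p}\,d\mu=\sum_{\xi<\o_{1}}\int_{C_{\xi}}|f|^{p}\,d\mu,
$$
which is the desired identity. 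The only genuine obstacle is the bookkeeping with the uncountable index set $\o_{1}$, since the failure of $\sigma$-finiteness forbids a naive application of additivity over the whole partition; this is entirely resolved by the countability of $\Xi$, and once that is in place the conclusion follows routinely from monotone convergence.
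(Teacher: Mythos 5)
Your proof is correct, and it rests on the same two pillars as the paper's: Chebyshev's inequality to make the support $\sigma$-finite, and countable additivity of $A\mapsto\int_A|f|^p\,d\mu$ over a countable disjoint subfamily of the partition. The route you take to the countability of the relevant index set is, however, genuinely cleaner than the paper's. The paper does not invoke the stated conclusion of Proposition \ref{Pr2.1} at all; instead it re-runs the covering argument from that proposition's proof: it covers each level set $[|f|^p>1/n]$ by countably many rectangles $R_\xi$ from the enumeration, bounds the indices by some $\alpha_n<\omega_1$, uses the fact that $\omega_1$ has uncountable cofinality to get a single bound $\alpha=\sup_n\alpha_n<\omega_1$, and then reindexes via the decomposition $B_\xi=\bigsqcup_n C_{\xi,n}$ to land in an initial segment $\{C_\xi:\xi<\beta\}$. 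You bypass all of this ordinal bookkeeping by applying the countability property of Proposition \ref{Pr2.1} (which is exactly what that proposition was formulated to provide) directly to the finite-measure level sets $S_n$, and taking a countable union. This modular use of the proposition avoids both the cofinality argument and the reindexing step, and arguably shows that the paper's Proposition \ref{Pr2.1} is stated in precisely the right form; the paper's version, in exchange, keeps the argument self-contained in terms of the explicit enumeration $\{R_\xi\}$, but gains nothing beyond that. One small point of care in your write-up that is handled correctly: the additivity $\int_{\bigsqcup_{\xi\in\Xi}C_\xi}|f|^p\,d\mu=\sum_{\xi\in\Xi}\int_{C_\xi}|f|^p\,d\mu$ needs no $\sigma$-finiteness of $\mu$, only that $\Xi$ is countable, which is exactly what you established.
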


\begin{proof}
	Let $f\in L^{p}(\mu)$. Then $\supp(f)$ is $\sigma$-finite. Indeed, 
	$$
	\supp(f)=\bigcup_{n\in\N}[|f|> 1/n],
	$$
	where
	$$
	[|f|> 1/n]:=\left\{x\in\R^{\N} : |f(x)|>1/n\right\},
	$$
	and, by Chebyshev's inequality,
	$$
	\mu\left([|f|^{p}>1/n]\right)\leq n\int_{\R^{\N}}|f|^{p}\; d\mu = n \|f\|^{p}_{L^{p}}<\infty.
	$$
	This implies that there exists an $\alpha_{n}<\omega_1$ such that 
	$$
	[|f|^{p}>1/n]\subset \bigcup_{\xi<\alpha_{n}} R_\xi.
	$$
	Consequently, 
	$$
	\supp(f)=\bigcup_{n\in\N}[|f|^{p}>1/n]\subset \bigcup_{n\in\N}\bigcup_{\xi<\alpha_{n}} R_\xi \subset \bigcup_{\xi <\a} R_{\xi},
	$$
	where $\a:=\sup_{n}\a_{n}<\o_{1}$ since the cofinality of $\omega_1$ is $\omega_1$. 
	
	On the other hand, each $B_{\xi}$ is decomposed into countably many finite rectangles $C_{\xi,n}$ by \eqref{e2.2}, so $ \bigsqcup_{\xi < \a} B_{\xi}$ can
	be rewritten as a countable union of such $C_{\xi,n}$. As a countable
	union of countable sets is countable, one can reindex this family
	as $\{C_{\xi} :  \xi < \b\}$ for some $\b < \o_{1}$. Therefore,
	$$
	\supp(f)\subset \bigcup_{\xi<\alpha} R_\xi= \bigsqcup_{\xi<\alpha} B_\xi =  \bigsqcup_{\xi<\beta}C_{\xi}.
	$$
	Hence, we deduce that
	$$
	\int_{\mathbb{R}^{\mathbb{N}}} |f|^{p} \ d\mu
	=\int_{ \bigsqcup_{\xi<\beta} C_\xi} |f|^{p} \ d\mu
	=\sum_{\xi<\beta} \int_{C_{\xi}} |f|^{p} \ d\mu
	=\sum_{\xi<\omega_1} \int_{C_{\xi}} |f|^{p} \ d\mu.
	$$
	The proof is concluded.
\end{proof}

Before presenting the main result of this section, we need the following lemma.

\begin{lemma}
	\label{Le1}
	Let $M\in\mc{B}_{\infty}$ be such that $\mu(M)<\infty$. Then, $L^{p}(M,\mu)$ is separable for every $1\leq p <\infty$.
\end{lemma}

\begin{proof}
	Let $M\in\mc{B}_{\infty}$ with $\mu(M)<\infty$. By the definition of the Borel $\s$-algebra $\mc{B}_{\infty}$, it is apparent that $\mc{B}_{\infty}$ is the $\s$-algebra generated by
	$$
	\mc{Q}:=\left\{\bigtimes_{i=1}^{m}B_{i}\times \bigtimes_{i=m+1}^{\infty}\R \; : \; B_{i}\in\mc{B}_{r}, \; \forall i\in \{1,2,\dots,m\} \; \text{and} \; m\in\N\right\}
	$$
	with $\mc{B}_{r}:=\{(a,b)  :  a,b \in\Q, \; a<b\}$. In particular, $\mc{B}_{\infty}|_{M}:=\{B\cap M : B\in\mc{B}_{\infty}\}$ is a countably generated $\s$-algebra and $\mu|_{M}$ is a finite measure. The proof concludes by noting that the $L^{p}$-space of a finite measure space with a countably generated $\s$-algebra, is separable for every $1\leq p <\infty$ (see, for instance, Proposition 3.4.5 of Cohn \cite{C}).
\end{proof}

We proceed to state and prove the main result of this section.

\begin{theorem}
	\label{TeoPri}
	Let $1\leq p <\infty$. Then, the space $L^{p}(\mu)$ is isometrically isomorphic to $\ell^{p}(\co,L^{p}[0,1])$. Symbolically,
	$$
	L^{p}(\mu)\cong \ell^{p}(\co,L^{p}[0,1]).
	$$
\end{theorem}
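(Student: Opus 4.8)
The plan is to use Proposition \ref{Le5.1} to split $L^{p}(\mu)$ as an $\ell^{p}$-direct sum indexed by the partition $\{C_{\xi}\colon \xi<\o_{1}\}$, then to recognize each nontrivial summand as a copy of $L^{p}[0,1]$, and finally to count that exactly $\co$ of the summands are nontrivial. Concretely, I would consider the map
$$
\Phi\colon L^{p}(\mu)\longrightarrow\Big(\bigoplus_{\xi<\o_{1}}L^{p}(C_{\xi},\mu)\Big)_{p},\qquad \Phi(f):=(f|_{C_{\xi}})_{\xi<\o_{1}}.
$$
Proposition \ref{Le5.1} shows at once that $\Phi$ is well-defined and isometric, since $\sum_{\xi<\o_{1}}\|f|_{C_{\xi}}\|_{L^{p}}^{p}=\sum_{\xi<\o_{1}}\int_{C_{\xi}}|f|^{p}\,d\mu=\int_{\R^{\N}}|f|^{p}\,d\mu=\|f\|_{L^{p}}^{p}$. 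For surjectivity, given $(h_{\xi})_{\xi<\o_{1}}$ in the target, only countably many $h_{\xi}$ are nonzero; extending each $h_{\xi}$ by $0$ off the Borel set $C_{\xi}$ and setting $f:=\sum_{\xi}h_{\xi}$ yields, by disjointness of the $C_{\xi}$, a Borel function with $\|f\|_{L^{p}}^{p}=\sum_{\xi}\|h_{\xi}\|_{L^{p}}^{p}<\infty$ and $\Phi(f)=(h_{\xi})_{\xi}$. Hence $\Phi$ is an isometric isomorphism.

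Next I would identify each summand. Write $S:=\{\xi<\o_{1}\colon \mu(C_{\xi})>0\}$. If $\mu(C_{\xi})=0$ then $L^{p}(C_{\xi},\mu)=\{0\}$ and the summand may be discarded, so $\Phi$ already gives $L^{p}(\mu)\cong\big(\bigoplus_{\xi\in S}L^{p}(C_{\xi},\mu)\big)_{p}$. Fix $\xi\in S$. The space $(C_{\xi},\mc{B}_{\infty}|_{C_{\xi}},\mu|_{C_{\xi}})$ is a finite measure space whose $L^{p}$ is separable by Lemma \ref{Le1}, and it is moreover purely nonatomic: writing $C_{\xi}=\bigtimes_{i}D_{i}$ with $0<\prod_{i}\l(D_{i})<\infty$, the pushforward of $\mu|_{C_{\xi}}$ under the first-coordinate projection is $\big(\prod_{i\geq 2}\l(D_{i})\big)\,\l|_{D_{1}}$, a nonzero multiple of Lebesgue measure and hence nonatomic, so any $\mu$-atom $A\subset C_{\xi}$ would push forward to a nonzero two-valued (thus atomic) measure that is absolutely continuous with respect to this nonatomic one, a contradiction. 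By the classical isomorphism theorem for separable nonatomic finite measure spaces (Carathéodory's theorem; see, e.g., \cite{La}), the measure algebra of $C_{\xi}$ agrees, up to the scaling factor $\mu(C_{\xi})$, with that of $[0,1]$, whence $L^{p}(C_{\xi},\mu)\cong L^{p}[0,1]$ isometrically. Consequently $L^{p}(\mu)\cong\ell^{p}(S,L^{p}[0,1])$.

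It remains to show $|S|=\co$. Under CH the index set is $\o_{1}$ with $|\o_{1}|=\aleph_{1}=\co$, so $|S|\leq\co$ and therefore $\operatorname{dens}L^{p}(\mu)=\operatorname{dens}\ell^{p}(S,L^{p}[0,1])\leq\co$. For the reverse inequality, consider the continuum many unit cubes $R_{t}:=\bigtimes_{i\in\N}[t_{i},t_{i}+1]$ for $t\in\{0,2\}^{\N}$. Each satisfies $\mu(R_{t})=\prod_{i}\l([t_{i},t_{i}+1])=1$, and distinct $t,t'\in\{0,2\}^{\N}$ differ in some coordinate, forcing $R_{t}\cap R_{t'}=\varnothing$. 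Thus $\{\mathbf{1}_{R_{t}}\colon t\in\{0,2\}^{\N}\}\subset L^{p}(\mu)$ is a $\co$-sized family with $\|\mathbf{1}_{R_{t}}-\mathbf{1}_{R_{t'}}\|_{L^{p}}=2^{1/p}$ whenever $t\neq t'$, so $\operatorname{dens}L^{p}(\mu)\geq\co$. Hence $\operatorname{dens}L^{p}(\mu)=\co$, and since $L^{p}[0,1]$ is separable this density equals $\max(|S|,\aleph_{0})$; as $\co>\aleph_{0}$ we conclude $|S|=\co$ and $L^{p}(\mu)\cong\ell^{p}(\co,L^{p}[0,1])$.

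The main obstacle is the summand identification: pinning down that $\mu$ restricted to a positive-measure finite rectangle is genuinely nonatomic, so that the classical $L^{p}$-isomorphism theorem applies, is the step that truly uses the specific structure of $\mu$ and where I would be most careful. The cardinality bookkeeping $|S|=\co$, by contrast, is handled cleanly by the density-character computation above, and the global isometric decomposition is an immediate consequence of Proposition \ref{Le5.1}.
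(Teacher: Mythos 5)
Your proof is correct, and its skeleton is the same as the paper's: the same restriction operator onto $\left(\bigoplus_{\xi<\omega_1} L^{p}(C_{\xi})\right)_{p}$ built on Propositions \ref{Pr2.1} and \ref{Le5.1}, the same countable-support argument for surjectivity, and the same identification of the summands with $L^{p}[0,1]$ via separability (Lemma \ref{Le1}), pure nonatomicity, and the classical theorem cited from \cite{La}. You diverge in two places, both to your credit. First, you prove nonatomicity of a positive-measure rectangle by a direct pushforward argument (projecting $\mu$ restricted to $C_{\xi}$ onto the first coordinate, where it becomes a nonzero multiple of Lebesgue measure, so an atom would push forward to a two-valued measure absolutely continuous with respect to a nonatomic one); the paper instead invokes Proposition \ref{prop:purely-nonatomic-finite} from its appendix, whose halving-and-iteration argument covers arbitrary finite-measure Borel sets, not just rectangles. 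Second, and more substantively, you isolate $S=\{\xi<\omega_1 : \mu(C_{\xi})>0\}$, discard the null summands, and prove $|S|=\mathfrak{c}$ by a density-character computation based on the continuum many pairwise disjoint unit cubes indexed by $\{0,2\}^{\N}$. The paper applies the $L^{p}[0,1]$-identification to every $C_{\xi}$ and concludes directly, silently assuming each piece has positive measure; but the sets $C_{\xi,n}=R_{\xi}\cap\bigcap_{\zeta<\xi}F_{\zeta,n}$ produced in Proposition \ref{Pr2.1} can perfectly well be empty or $\mu$-null, in which case $L^{p}(C_{\xi})=\{0\}$. Some counting argument such as yours (or the observation that each of the $\mathfrak{c}$ disjoint unit cubes must meet some $C_{\xi}$ in positive measure, while a single $C_{\xi}$, having finite measure, can serve only countably many cubes) is genuinely needed to guarantee that $\mathfrak{c}$ many summands are nontrivial, so your version closes a step that the paper's write-up glosses over.
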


\begin{proof}
	Let $\{C_\xi\colon \xi<\o_{1}\}$ be the sets introduced in Proposition \ref{Pr2.1} and consider the linear operator
	\begin{equation*}
		T: L^{p}(\mu)  \longrightarrow \left(\bigoplus_{\xi < \o_{1}} L^{p}(C_{\xi})\right)_{p}, \quad  T(f) := (f_{\xi})_{\xi<\o_{1}},
	\end{equation*}
	where $f_{\xi}:= f  \upharpoonleft C_{\xi}$ for every $\xi<\o_{1}$.
	The operator $T$ is well defined as a consequence of Proposition \ref{Le5.1}. Indeed, note that
	\begin{align*}
		\|Tf\|_{\ell^{p}}^{p}=\sum_{\xi<\o_{1}}\|f_{\xi}\|_{L^{p}(C_{\xi})}^{p}= \sum_{\xi<\o_{1}}\int_{C_{\xi}}|f|^{p} \; d\mu  = \int_{\R^{\N}}|f|^{p} \; d\mu < \infty.
	\end{align*}
	Moreover, this proves that $T$ is an isometry. Now, let 
	$$
	\bar{f}=(f_{\xi})_{\xi<\o_{1}}\in \left(\bigoplus_{\xi < \o_{1}} L^{p}(C_{\xi})\right)_{p}.
	$$ 
	Note that $\supp(\bar{f})$ is countable, where $\supp(\bar{f}):=\{\xi < \o_{1} : \|f_{\xi}\|_{L^{p}} \neq 0\}$. 
	Define $f:\R^{\N}\to\R$ as
	$$
	f:=\sum_{\xi\in\supp(\bar{f})}f_{\xi} \; \chi_{C_{\xi}}.
	$$
	Then, it follows that $f\in L^{p}(\mu)$ as
	$$
	\|f\|^{p}_{L^{p}}=\sum_{\xi\in \supp(\bar{f})}\int_{C_{\xi}}|f_{\xi}|^{p}\; d\mu = \sum_{\xi\in \supp(\bar{f})} \|f_{\xi}\|^{p}_{L^{p}}= \sum_{\xi<\o_{1}} \|f_{\xi}\|^{p}_{L^{p}}=\|\bar{f}\|_{\ell^{p}}^{p}<\infty.
	$$
	Moreover, $T(f)=\bar{f}$. Therefore, $T$ is an isometric isomorphism.
	\par Now, as $\mu(C_{\xi})<\infty$ for each $\xi<\o_{1}$, by Lemma \ref{Le1}, it follows that $L^{p}(C_{\xi})$ is separable. Moreover, by Proposition \ref{prop:purely-nonatomic-finite}, $(C_{\xi},\mu)$ is purely nonatomic. Recall that, for $1\leq p <\infty$, if $(X,\Sigma, \nu)$ is a finite, purely nonatomic measure space such that $L^{p}(\nu)$ is separable, then $L^{p}(\nu)$ is isometrically isomorphic to $L^{p}[0,1]$ (see, for instance, \cite[Cor.~p.~128]{La}). Hence, $L^{p}(C_{\xi})$ is isometrically isomorphic to $L^{p}[0,1]$ for each $1\leq p <\infty$. Therefore, we conclude that $L^{p}(\mu)$ is isometrically isomorphic to $\ell^{p}(\co, L^{p}[0,1])$.
\end{proof}

Note that since $L^{2}(\mu)$ is a Hilbert space, its isometric classification reduces to computing the Hilbert dimension, i.e., the cardinality of an orthonormal basis. By Theorem \ref{TeoPri}, this dimension is $\mf{c}$. Hence, by the standard classification of Hilbert spaces, $L^{2}(\mu)\cong \ell^{2}(\co)$.

Theorem \ref{TeoPri} provides a finite-dimensional reduction formula for computing the integral of any $\mu$-integrable function. For its statement we recall some preliminaries. Given a sequence of finite measure spaces $\{(X_{n},\Sigma_{n},\nu_{n})\}_{n\in\N}$ with $\prod_{n\in\N}\nu(X_{n})<\infty$, consider the standard product measure $\nu:=\bigotimes_{n\in\N}\nu_{n}$. The construction is classical (see, for instance, \cite[\S 254]{Fremlin2} in the probability setting); the general case follows by normalizing each factor $\nu_{n}\mapsto \nu_{n}/\nu_{n}(X_{n})$. For this product measure, Jessen \cite{Je} (see also \cite[Th.~7.16]{St}) established a finite-dimensional reduction formula: for every $f\in L^{1}(\nu)$ there exists a sequence $f_{n}\in L^{1}(\nu^{n})$, where $\nu^{n}:=\bigotimes_{i=1}^{n}\nu_{i}$, such that
$$
\int_{\bigtimes_{n\in\N}X_{i}}f \; d\nu
=\lim_{n\to\infty}\int_{\bigtimes_{i=1}^{n}X_{i}}f_{n} \; d\nu^{n}.
$$
The final result of this section extends this formula to the measure $\mu$. For a finite rectangle $C_{\xi}=\bigtimes_{i\in\N}C_{\xi}(i)\in\mc{F}(\mc{B},\l)$, set
$$
C^{n}_{\xi}:=\bigtimes_{i=1}^{n}C_{\xi}(i),\qquad n\in\N.
$$
Since each $C_{\xi}(i)$ has finite measure, the restriction of $\mu$ to $C_{\xi}$ is the standard product measure $\bigotimes_{i\in\N}(\l\!\restriction C_{\xi}(i))$. For brevity, write $\mu_{\xi}^{n}:=\bigotimes_{i=1}^{n}(\l\!\restriction C_{\xi}(i))$ for every $n\in\N$.

\begin{theorem}
	Let $f\in L^{1}(\mu)$. Then, there exists a family of integrable functions 
	$$\{f_{\xi,n}\in L^{1}(\mu^{n}_{\xi}): \xi < \o_{1}, n\in\N\},$$
	such that
	\begin{equation}
		\label{E111}
		\int_{\mathbb{R}^{\mathbb{N}}}f \; d\mu =\lim_{n\to\infty}\sum_{\xi < \omega_{1}} \int_{C^{n}_{\xi}}f_{\xi, n} \; d\mu_{\xi}^{n}.
	\end{equation}
\end{theorem}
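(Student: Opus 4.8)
The plan is to glue together two ingredients already available: the $\sigma$-finite support decomposition behind Proposition~\ref{Le5.1}, and Jessen's finite-dimensional reduction formula applied rectangle by rectangle. First I would fix $f\in L^{1}(\mu)$ and recall, exactly as in the proof of Proposition~\ref{Le5.1}, that $\supp(f)$ is $\sigma$-finite, hence $\supp(f)\subset\bigsqcup_{\xi<\beta}C_{\xi}$ for some countable $\beta<\o_{1}$. Applying Proposition~\ref{Le5.1} to $f^{+}$ and $f^{-}$ (both in $L^{1}(\mu)$ since $|f^{\pm}|\leq|f|$) and subtracting --- equivalently, using the absolute convergence $\sum_{\xi<\o_{1}}\int_{C_{\xi}}|f|\,d\mu=\|f\|_{L^{1}}<\infty$ to justify countable additivity of the integral along $\R^{\N}=\bigsqcup_{\xi<\o_{1}}C_{\xi}$ --- I obtain the signed identity
\begin{equation*}
	\int_{\R^{\N}}f\,d\mu=\sum_{\xi<\o_{1}}\int_{C_{\xi}}f\,d\mu,
\end{equation*}
in which only the countably many terms with $\xi<\beta$ are nonzero.

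Next, for each $\xi<\o_{1}$ I would invoke that the restriction of $\mu$ to $C_{\xi}=\bigtimes_{i\in\N}C_{\xi}(i)$ is the product measure $\bigotimes_{i\in\N}(\l\!\restriction C_{\xi}(i))$ with $\prod_{i}\l(C_{\xi}(i))<\infty$, which is precisely the setting of Jessen's theorem recalled above. Rather than extract an arbitrary reducing sequence, I would take the canonical one given by the partial integrals
\begin{equation*}
	f_{\xi,n}:=\int_{\bigtimes_{i>n}C_{\xi}(i)} f\,d\Big(\bigotimes_{i>n}(\l\!\restriction C_{\xi}(i))\Big)
\end{equation*}
(the conditional expectation of $f\!\restriction C_{\xi}$ onto its first $n$ coordinates) when $\mu(C_{\xi})>0$, and $f_{\xi,n}:=0$ when $\mu(C_{\xi})=0$. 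For this choice Fubini's theorem yields the \emph{exact} identity $\int_{C^{n}_{\xi}}f_{\xi,n}\,d\mu^{n}_{\xi}=\int_{C_{\xi}}f\,d\mu$ for every $n\in\N$, not merely in the limit; in particular each $f_{\xi,n}\in L^{1}(\mu^{n}_{\xi})$.

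The only genuinely delicate point is exchanging $\lim_{n\to\infty}$ with the transfinite (but effectively countable) sum $\sum_{\xi<\o_{1}}$. With the partial-integral choice this obstacle dissolves: for every fixed $n$,
\begin{equation*}
	\sum_{\xi<\o_{1}}\int_{C^{n}_{\xi}}f_{\xi,n}\,d\mu^{n}_{\xi}=\sum_{\xi<\o_{1}}\int_{C_{\xi}}f\,d\mu=\int_{\R^{\N}}f\,d\mu,
\end{equation*}
so the sequence whose limit appears in \eqref{E111} is constant, and the formula follows at once. I expect the presentation could alternatively be carried out via a dominated-convergence argument for series (Tannery's theorem): the per-$\xi$ discrepancies tend to $0$ as $n\to\infty$ and, for the canonical sequence, are dominated by $2\int_{C_{\xi}}|f|\,d\mu$, whose sum is $2\|f\|_{L^{1}}<\infty$. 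Either route closes the proof, the main work having been the reduction to countably many product rectangles and the exactness afforded by Fubini.
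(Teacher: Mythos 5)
Your proof is correct, and its skeleton matches the paper's: decompose the integral over the rectangles $\{C_{\xi}\}$ via Proposition \ref{Le5.1}, then reduce each $\int_{C_{\xi}}f\,d\mu$ to finite-dimensional integrals over $C^{n}_{\xi}$. The difference lies in the final step. The paper invokes Jessen's theorem as a black box, obtaining \emph{some} sequence $f_{\xi,n}$ with $\int_{C_{\xi}}|f|\,d\mu=\lim_{n}\int_{C^{n}_{\xi}}|f_{\xi,n}|\,d\mu^{n}_{\xi}$, then interchanges $\lim_{n}$ with $\sum_{\xi<\omega_{1}}$ by appeal to the monotone convergence theorem, and finally splits $f=f_{+}-f_{-}$. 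You instead take the canonical conditional-expectation (partial-integral) sequence, for which Fubini gives the \emph{exact} identity $\int_{C^{n}_{\xi}}f_{\xi,n}\,d\mu^{n}_{\xi}=\int_{C_{\xi}}f\,d\mu$ for every $n$; the transfinite sum is then constant in $n$, no interchange of limits is needed, and signed $f$ is handled directly. This is arguably tighter than the paper's argument: for an arbitrary sequence furnished by Jessen's theorem the interchange of $\lim_{n}$ and $\sum_{\xi}$ is not automatic (one needs monotonicity or domination in $n$, which is precisely what the canonical choice provides), so your version makes explicit the choice that the paper's appeal to monotone convergence implicitly relies on. Two minor remarks: your fallback $f_{\xi,n}:=0$ when $\mu(C_{\xi})=0$ is genuinely needed, since for degenerate rectangles the tail products $\prod_{i>n}\lambda(C_{\xi}(i))$ may be infinite and the partial integral undefined; and the Tannery-type alternative you sketch at the end is sound but redundant once exactness is in hand.
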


\begin{proof}
	Let $f\in L^{1}(\mu)$. Then, by Proposition \ref{Le5.1}, 
	$$
	\int_{\R^{\N}}|f| \; d\mu = \sum_{\xi<\o_{1}}\int_{C_{\xi}}|f|\; d\mu.
	$$
	As $f\restriction C_{\xi}\in L^{1}(C_{\xi})$ and $\mu(C_{\xi})<\infty$, by Jessen theorem \cite[Th. 7.16]{St}, there exists a sequence $(f_{\xi,n})_{n\in\N}$ such that 
	$$
	\int_{C_{\xi}}|f|\; d\mu=\lim_{n\to\infty}\int_{C^{n}_{\xi}}|f_{\xi, n}| \; d\mu_{\xi}^{n}.
	$$
	Finally, by the monotone convergence theorem,
	\begin{align*}
		\int_{\R^{\N}}|f| \; d\mu & = \sum_{\xi<\o_{1}}\int_{C_{\xi}}|f|\; d\mu = \sum_{\xi<\o_{1}}\lim_{n\to\infty}\int_{C^{n}_{\xi}}|f_{\xi, n}| \; d\mu_{\xi}^{n}=\lim_{n\to\infty}\sum_{\xi<\o_{1}}\int_{C^{n}_{\xi}}|f_{\xi, n}| \; d\mu_{\xi}^{n}.
	\end{align*}
	This concludes the proof of \eqref{E111} for $f\geq 0$. In the general case, rewrite $f$ as $f=f_{+}-f_{-}$ and apply the same argument to $f_{+},f_{-}\geq 0$.
\end{proof}

\section{Study of the general case}\label{Se3}

In this section, we study the isometric structure of $L^{p}(\mu)$ in the general case, that is, if we do not impose the validity of CH. We prove that $L^{p}(\mu)$ contains an isometric and complemented copy of $\ell^{p}(\mf{c},L^{p}[0,1])$. This analysis is performed via a cube decomposition of $\R^{\N}$ that splits $L^{p}(\mu)$ into two complemented subspaces $G^{p}$ and $B^{p}$. 
\par Let us denote $I:=\Z^{\N}$. For each $\mathfrak{a}=(a_{n})_{n\in\mathbb{N}}\in I$, we introduce the unit cube
\begin{equation*}
	\mc{C}_{\mathfrak{a}}:=\bigtimes_{n\in\mathbb{N}}[a_{n},a_{n}+1).
\end{equation*}
Note that $\mc{C}_{\mf{a}}\in \mc{F}(\mc{B},\l)$ and $\mu(\mc{C}_{\mf{a}})=\vol(\mc{C}_{\mf{a}})=1$ for every $\mf{a}\in I$. 
Moreover, it is apparent that $\mc{C}_{\mf{a}}\cap \mc{C}_{\mf{b}}=\varnothing$ for each $\mf{a}\neq \mf{b}$ and 
$$
\R^{\N}= \bigsqcup_{\mf{a}\in I}\mc{C}_{\mf{a}}.
$$
The following result studies the behavior of a function $f\in L^{p}(\mu)$ with respect to this decomposition. 	
\begin{theorem}\label{T4.1}
	Let $1\leq p <\infty$ and $f\in L^{p}(\mu)$. Then the set
	\begin{equation*}
		\mathcal{O}_{f}:=\left\{\mathfrak{a}\in I : \int_{\mc{C}_{\mathfrak{a}}}|f|^{p} \  d\mu\neq 0 \right\}
	\end{equation*}
	\noindent is countable and 
	\begin{equation}\label{I1}
		\sum_{\mathfrak{a}\in\mathcal{O}_{f}}\int_{\mc{C}_{\mathfrak{a}}}|f|^{p} \ d\mu 
		=\int_{\mc{F}_{f}}|f|^{p} \ d\mu \leq \int_{\mathbb{R}^{\mathbb{N}}} |f|^{p} \ d\mu,
	\end{equation}
	where
	$$
	\mc{F}_{f}:= \bigsqcup_{\mf{a}\in \mc{O}_{f}}\mc{C}_{\mf{a}}.
	$$
\end{theorem}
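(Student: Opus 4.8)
The heart of the statement is the countability of $\mc O_f$; once that is known, the identity in \eqref{I1} is just countable additivity of the integral over the disjoint countable family $\{\mc C_{\mf a}\}_{\mf a\in\mc O_f}$ (which also shows $\mc F_f$ is measurable as a countable union of Borel cubes), and the final inequality is monotonicity, since $\mc F_f\subset\R^{\N}$ and $|f|^{p}\geq 0$. So the whole work is to prove that $\mc O_f$ is countable, and the plan is to reduce this to a single combinatorial fact: a fixed finite rectangle can meet only countably many unit cubes in positive measure.

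Precisely, for a finite rectangle $\mathscr C=\bigtimes_{i\in\N}D_{i}\in\mc F(\mc B,\l)$ I would set $p_{i}(k):=\l(D_{i}\cap[k,k+1))\in[0,1]$, so that $\mu(\mc C_{\mf a}\cap\mathscr C)=\prod_{i\in\N}p_{i}(a_{i})$ for $\mf a=(a_{i})\in I$, and prove the \emph{Claim}: the set $J(\mathscr C):=\{\mf a\in I:\mu(\mc C_{\mf a}\cap\mathscr C)>0\}$ is countable. We may assume $\mu(\mathscr C)=\prod_{i}\l(D_{i})>0$ (else $J(\mathscr C)=\varnothing$); then $\l(D_{i})\to 1$, so $\l(D_{i})<3/2$ for all $i\geq N_{0}$. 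For such $i$, since $\sum_{k\in\Z}p_{i}(k)=\l(D_{i})<3/2$, at most one integer $k_{i}^{\ast}$ satisfies $p_{i}(k)>3/4$ (two such would give sum $>3/2>\l(D_{i})$). Now if $\mf a\in J(\mathscr C)$, convergence of $\prod_{i}p_{i}(a_{i})$ to a positive limit with factors in $[0,1]$ forces $p_{i}(a_{i})\to 1$, hence $p_{i}(a_{i})>3/4$ for all $i\geq N_{1}$ (some $N_{1}\geq N_{0}$), so $a_{i}=k_{i}^{\ast}$ for $i\geq N_{1}$. Thus every admissible $\mf a$ agrees with the fixed sequence $(k_{i}^{\ast})_{i}$ off a finite initial segment, on which each coordinate lies in the countable set $\{k\in\Z:p_{i}(k)>0\}$; hence $J(\mathscr C)\subset\bigcup_{N\geq N_{0}}\{\mf a:a_{i}=k_{i}^{\ast}\ \forall i\geq N\}$ is a countable union of countable sets.

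To finish, I would recall from the proof of Proposition \ref{Le5.1} that $\{f\neq 0\}=\bigcup_{n}[|f|>1/n]$ with each $[|f|>1/n]$ of finite measure by Chebyshev, hence (by the definition of $\mu$ via the outer measure $\mu^{\ast}$) covered by countably many finite rectangles; collecting these yields a countable family $\{\mathscr C_{m}\}_{m\in\N}\subset\mc F(\mc B,\l)$ with $\{f\neq 0\}\subset\bigcup_{m}\mathscr C_{m}$. If $\mf a\in\mc O_{f}$ then $\mu(\mc C_{\mf a}\cap\{f\neq 0\})>0$, and since $\mc C_{\mf a}\cap\{f\neq 0\}\subset\bigcup_{m}(\mc C_{\mf a}\cap\mathscr C_{m})$, subadditivity gives $\mu(\mc C_{\mf a}\cap\mathscr C_{m})>0$ for some $m$, i.e. $\mf a\in J(\mathscr C_{m})$. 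Therefore $\mc O_{f}\subset\bigcup_{m}J(\mathscr C_{m})$ is countable, and \eqref{I1} follows as explained above.

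The main obstacle is the \emph{Claim}: since $I=\Z^{\N}$ has cardinality $\co$, a rectangle could a priori meet uncountably many cubes, and the decisive point is that finite positive volume forces $\l(D_{i})\to 1$ and hence a \emph{unique} dominant integer $k_{i}^{\ast}$ in each high coordinate, so admissible $\mf a$ deviate from $(k_{i}^{\ast})_{i}$ only finitely often. In carrying this out I would be careful that the factors genuinely lie in $[0,1]$ (so that a positive infinite product does force the factors to tend to $1$), and that the thresholds $3/2$ and $3/4$ indeed yield uniqueness of $k_{i}^{\ast}$ for all large $i$.
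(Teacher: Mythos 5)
Your proof is correct, but it takes a genuinely different and far longer route than the paper's. The paper's entire countability argument is the standard pigeonhole-plus-divergence one, exploiting at the outset the fact you only invoke at the very end: the cubes $\mc{C}_{\mf{a}}$ are pairwise disjoint. If $\mc{O}_{f}$ were uncountable, then for some $\varepsilon>0$ there would be an infinite countable subfamily $F\subset\mc{O}_{f}$ with $\int_{\mc{C}_{\mf{a}}}|f|^{p}\,d\mu\geq\varepsilon$ for all $\mf{a}\in F$; summing over this countable disjoint family and using countable additivity gives $\int_{\R^{\N}}|f|^{p}\,d\mu\geq\sum_{\mf{a}\in F}\varepsilon=\infty$, contradicting $f\in L^{p}(\mu)$. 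That is the whole proof. Your argument instead establishes a stronger structural fact: the set $J(\mathscr{C})$ of cubes that a fixed finite rectangle meets in positive measure is countable (positive finite volume forces $\l(D_{i})\to 1$, hence a unique dominant integer $k_{i}^{\ast}$ in each high coordinate, so admissible $\mf{a}$ have an eventually fixed tail), and you then cover $\{f\neq 0\}$ by countably many rectangles via Chebyshev and the outer-measure definition of $\mu$. Your Claim and its proof are sound — the thresholds $3/2$ and $3/4$ do yield uniqueness, a positive infinite product of factors in $[0,1]$ does force the factors to tend to $1$, and the reduction via $\mu(\mc{C}_{\mf{a}}\cap\{f\neq 0\})>0$ is fine — and it in fact shows countability of the larger set $\{\mf{a}\in I:\mu(\mc{C}_{\mf{a}}\cap\supp f)>0\}$, which the paper's proof does not give. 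But as a proof of the stated theorem it is overkill, and the paper's disjointness argument has the advantage of reusability: essentially the same two lines are applied later (Subsection \ref{S3}) to the family $\{C_{A}\}_{A\in\mc{A}}$, whose members are not disjoint but only have pairwise $\mu$-null intersections, a situation where your rectangle-versus-cube combinatorics would not apply directly.
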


\begin{proof}
	 If $\mathcal{O}_{f}$ were uncountable, then there would exist $\varepsilon>0$ and infinite countable $F\subset  \mathcal{O}_{f}$ such that 
	$ \int_{\mc{C}_{\mathfrak{a}}}|f|^{p} \  d\mu\geq \varepsilon$ for all $\mathfrak{a}\in F$. Then,
	\[
	\int_{\mathbb{R}^{\mathbb{N}}} |f|^{p} \ d\mu
	\geq \sum_{\mathfrak{a}\in F}\int_{\mc{C}_{\mathfrak{a}}} |f|^{p} \ d\mu
	\geq \sum_{\mathfrak{a}\in F}\varepsilon =\infty,
	\]
	This contradicts $f\in L^{p}(\mu)$. The proof is concluded as countability of $\mathcal{O}_{f}$ clearly implies inequality \eqref{I1}.
\end{proof}
It would be desirable that inequality \eqref{I1} were in fact an equality. However, as we shall see in Subsection \ref{S3}, this is not always true
(as we have uncountably many sets $\mc{C}_{\mathfrak{a}}$ and the integrals are only countably additive with respect to integrated sets).
This fact motivates the definition of the following subsets of $L^{p}(\mu)$, $1\leq p <\infty$:
$$
G^{p}:=\left\{f\in L^{p}(\mu) \; : \; \int_{\mathbb{R}^{\mathbb{N}}} |f|^{p} \ d\mu  = 
\sum_{\mathfrak{a}\in\mathcal{O}_{f}}\int_{\mc{C}_{\mathfrak{a}}}|f|^{p} \ d\mu =\int_{\mc{F}_{f}}|f|^{p} \ d\mu \right\}, 
$$
and
$$
B^{p}:=\left\{f\in L^{p}(\mu) \; : \; \int_{\mc{C}_{\mathfrak{a}}}|f|^{p} \ d\mu = 0 \; \; \text{for each } \mathfrak{a}\in I\right\}.
$$
The next two subsections are 
devoted to the study of these sets. 

\subsection{Study of $G^{p}$} \label{S2}

In this subsection, we proceed to study the subsets $G^{p}$, $1\leq p <\infty$. We are able to characterize completely its structure. We start by proving that $G^{p}$ is a subspace of $L^{p}(\mu)$. We need a preliminary lemma.

\begin{lemma}
	\label{L1}
	For every $f_{1}, f_{2}\in L^{p}(\mu)$, we have that $\mc{O}_{f_{1}+f_{2}} \subset \mc{O}_{f_{1}}\cup \mc{O}_{f_{2}}$, $\mc{F}_{f_{1}+f_{2}} \subset \mc{F}_{f_{1}}\cup \mc{F}_{f_{2}}$ and
	\begin{equation}
		\label{e3.2}
	\int_{  \mc{F}_{f_{1}+f_{2}}  }|f_{1}+f_{2}|^{p} \ d\mu=
	\int_{  \mc{F}_{f_{1}}\cup\mc{F}_{f_{2}}  }|f_{1}+f_{2}|^{p} \ d\mu.
	\end{equation}
\end{lemma}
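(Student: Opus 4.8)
The plan is to dispatch the three assertions in sequence, with the inclusion of $\mathcal{O}$-sets carrying essentially all the content. First I would prove $\mc{O}_{f_{1}+f_{2}}\subset\mc{O}_{f_{1}}\cup\mc{O}_{f_{2}}$ by contraposition. Fix $\mathfrak{a}\in I$ with $\mathfrak{a}\notin\mc{O}_{f_{1}}\cup\mc{O}_{f_{2}}$; by the definition of these sets this means $\int_{\mc{C}_{\mathfrak{a}}}|f_{1}|^{p}\,d\mu=0$ and $\int_{\mc{C}_{\mathfrak{a}}}|f_{2}|^{p}\,d\mu=0$. Since each integrand is nonnegative, the vanishing of the integral forces $f_{1}=0$ and $f_{2}=0$ $\mu$-almost everywhere on $\mc{C}_{\mathfrak{a}}$, hence $f_{1}+f_{2}=0$ $\mu$-a.e. on $\mc{C}_{\mathfrak{a}}$, and therefore $\int_{\mc{C}_{\mathfrak{a}}}|f_{1}+f_{2}|^{p}\,d\mu=0$, i.e. $\mathfrak{a}\notin\mc{O}_{f_{1}+f_{2}}$. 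This establishes the first inclusion.

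The second inclusion $\mc{F}_{f_{1}+f_{2}}\subset\mc{F}_{f_{1}}\cup\mc{F}_{f_{2}}$ then comes for free. Because distinct indices produce disjoint cubes, one has $\mc{F}_{f_{1}}\cup\mc{F}_{f_{2}}=\bigsqcup_{\mathfrak{a}\in\mc{O}_{f_{1}}\cup\mc{O}_{f_{2}}}\mc{C}_{\mathfrak{a}}$, and monotonicity of this cube-union applied to the first inclusion yields the claim directly.

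For the integral identity I would write $\mc{F}_{f_{1}}\cup\mc{F}_{f_{2}}=\bigsqcup_{\mathfrak{a}\in\mc{O}_{f_{1}}\cup\mc{O}_{f_{2}}}\mc{C}_{\mathfrak{a}}$ and invoke the countability of $\mc{O}_{f_{1}}$ and $\mc{O}_{f_{2}}$ from Theorem \ref{T4.1}, so that this is a countable disjoint union and countable additivity of the integral gives
$$
\int_{\mc{F}_{f_{1}}\cup\mc{F}_{f_{2}}}|f_{1}+f_{2}|^{p}\,d\mu=\sum_{\mathfrak{a}\in\mc{O}_{f_{1}}\cup\mc{O}_{f_{2}}}\int_{\mc{C}_{\mathfrak{a}}}|f_{1}+f_{2}|^{p}\,d\mu.
$$
Using the first inclusion I would split the index set as $\mc{O}_{f_{1}}\cup\mc{O}_{f_{2}}=\mc{O}_{f_{1}+f_{2}}\sqcup\big[(\mc{O}_{f_{1}}\cup\mc{O}_{f_{2}})\setminus\mc{O}_{f_{1}+f_{2}}\big]$. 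Every index $\mathfrak{a}$ in the second block satisfies $\mathfrak{a}\notin\mc{O}_{f_{1}+f_{2}}$, which is by definition exactly the statement $\int_{\mc{C}_{\mathfrak{a}}}|f_{1}+f_{2}|^{p}\,d\mu=0$, so those terms drop out. A final application of countable additivity over $\mc{O}_{f_{1}+f_{2}}$ rewrites the surviving sum as $\int_{\mc{F}_{f_{1}+f_{2}}}|f_{1}+f_{2}|^{p}\,d\mu$, which is \eqref{e3.2}.

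I do not expect a substantive obstacle: the argument is bookkeeping built on two elementary facts. The only points requiring care are the passage from a vanishing integral to pointwise a.e.\ vanishing on each cube (used in the first inclusion) and the appeal to Theorem \ref{T4.1} to guarantee countability before decomposing the integral over $\mc{F}_{f_{1}}\cup\mc{F}_{f_{2}}$ into a sum of cube-integrals. The latter is genuinely necessary, since the full family $\{\mc{C}_{\mathfrak{a}}\}_{\mathfrak{a}\in I}$ is uncountable and countable additivity would otherwise not apply.
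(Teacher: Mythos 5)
Your proposal is correct and takes essentially the same approach as the paper: the only cosmetic difference is that you prove the inclusion $\mc{O}_{f_{1}+f_{2}}\subset\mc{O}_{f_{1}}\cup\mc{O}_{f_{2}}$ by contraposition (vanishing integrals force a.e.\ vanishing on the cube), whereas the paper argues it directly via the triangle inequality for the $L^{p}$-norm on $\mc{C}_{\mf{a}}$. The integral identity \eqref{e3.2} is established identically in both: countability of $\mc{O}_{f_{1}}\cup\mc{O}_{f_{2}}$ (from Theorem \ref{T4.1}) plus the fact that $\int_{\mc{C}_{\mf{a}}}|f_{1}+f_{2}|^{p}\,d\mu=0$ for every $\mf{a}\in(\mc{O}_{f_{1}}\cup\mc{O}_{f_{2}})\setminus\mc{O}_{f_{1}+f_{2}}$.
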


\begin{proof}
	Let 
	$\mf{a}\in \mc{O}_{f_{1}+f_{2}}$. Then, by the triangle inequality,
	$$
	0<\left(\int_{\mc{C}_{\mf{a}}}|f_{1}+f_{2}|^{p} \ d\mu \right)^{\frac{1}{p}} \leq \left(\int_{\mc{C}_{\mf{a}}}|f_{1}|^{p} \ d\mu\right)^{\frac{1}{p}} + \left(\int_{\mc{C}_{\mf{a}}}|f_{2}|^{p} \ d\mu\right)^{\frac{1}{p}}.
	$$
	Therefore, 
	$$
	 \int_{\mc{C}_{\mf{a}}}|f_{1}|^{p} \ d\mu\neq 0 \quad \hbox{or} \;\;  \int_{\mc{C}_{\mf{a}}}|f_{2}|^{p} \ d\mu\neq 0,
	$$
	and this implies that $\mf{a}\in \mc{O}_{f_{1}}\cup \mc{O}_{f_{2}}$, completing the proof of inclusions. 
	Equation \eqref{e3.2} follows from the countability of $\mc{F}_{f_{1}}\cup\mc{F}_{f_{2}}$ and the fact that 
	$ \int_{\mc{C}_{\mf{a}}}|f_{1}+f_{2}|^{p}\ d\mu = 0$ for every $\mf{a}\in (\mc{O}_{f_{1}}\cup \mc{O}_{f_{2}})\setminus\mc{O}_{f_{1}+f_{2}}$.
%
%
%
\end{proof}

The following result proves that $G^{p}$ is a subspace of $L^{p}(\mu)$.

\begin{lemma}	\label{L2}
	$G^{p}$ is a subspace of $L^{p}(\mu)$.
\end{lemma}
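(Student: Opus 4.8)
The plan is to reduce membership in $G^{p}$ to a single transparent condition and then check the two linear-space axioms against it. First I would note that, by the equality
$$
\sum_{\mf{a}\in\mc{O}_{f}}\int_{\mc{C}_{\mf{a}}}|f|^{p}\ d\mu=\int_{\mc{F}_{f}}|f|^{p}\ d\mu
$$
already recorded in Theorem \ref{T4.1}, the first of the two defining equalities of $G^{p}$ holds for \emph{every} $f\in L^{p}(\mu)$. Thus $f\in G^{p}$ if and only if $\int_{\R^{\N}}|f|^{p}\,d\mu=\int_{\mc{F}_{f}}|f|^{p}\,d\mu$. Since $\mc{F}_{f}\subset\R^{\N}$ and both integrals are finite, additivity over the disjoint decomposition $\R^{\N}=\mc{F}_{f}\sqcup(\R^{\N}\setminus\mc{F}_{f})$ shows this is equivalent to $\int_{\R^{\N}\setminus\mc{F}_{f}}|f|^{p}\,d\mu=0$, i.e. to $f=0$ $\mu$-a.e. on $\R^{\N}\setminus\mc{F}_{f}$. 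This a.e.-vanishing reformulation is the working characterization I would use throughout, and the fact that $\mc{F}_{f}$ is Borel follows from the countability of $\mc{O}_{f}$ in Theorem \ref{T4.1}.

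Closure under scalar multiplication is then immediate: for $\lambda=0$ we have $\lambda f=0\in G^{p}$, while for $\lambda\neq 0$ the scaling $\int_{\mc{C}_{\mf{a}}}|\lambda f|^{p}\,d\mu=|\lambda|^{p}\int_{\mc{C}_{\mf{a}}}|f|^{p}\,d\mu$ gives $\mc{O}_{\lambda f}=\mc{O}_{f}$, hence $\mc{F}_{\lambda f}=\mc{F}_{f}$, and the a.e.-vanishing condition for $\lambda f$ coincides with that for $f$. The substance of the argument will be closure under addition, where Lemma \ref{L1} does the heavy lifting. Given $f_{1},f_{2}\in G^{p}$, the characterization above gives $f_{i}=0$ $\mu$-a.e. on $\R^{\N}\setminus\mc{F}_{f_{i}}$, so $f_{1}+f_{2}=0$ $\mu$-a.e. on $(\R^{\N}\setminus\mc{F}_{f_{1}})\cap(\R^{\N}\setminus\mc{F}_{f_{2}})=\R^{\N}\setminus(\mc{F}_{f_{1}}\cup\mc{F}_{f_{2}})$. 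Consequently
$$
\int_{\R^{\N}}|f_{1}+f_{2}|^{p}\,d\mu=\int_{\mc{F}_{f_{1}}\cup\mc{F}_{f_{2}}}|f_{1}+f_{2}|^{p}\,d\mu,
$$
and combining this with \eqref{e3.2} of Lemma \ref{L1} yields $\int_{\R^{\N}}|f_{1}+f_{2}|^{p}\,d\mu=\int_{\mc{F}_{f_{1}+f_{2}}}|f_{1}+f_{2}|^{p}\,d\mu$, which is precisely the condition that $f_{1}+f_{2}\in G^{p}$.

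The one point to watch, and the reason Lemma \ref{L1} is indispensable rather than a mere convenience, is that cancellation can make $\mc{F}_{f_{1}+f_{2}}$ a \emph{proper} subset of $\mc{F}_{f_{1}}\cup\mc{F}_{f_{2}}$: on the surplus cubes one has $\int_{\mc{C}_{\mf{a}}}|f_{1}+f_{2}|^{p}\,d\mu=0$, so replacing the union by $\mc{F}_{f_{1}+f_{2}}$ changes nothing. This is exactly what \eqref{e3.2} records, so no further estimate is needed and the subspace verification closes cleanly. I would remark that only the linear (not topological) subspace structure is being claimed here, so questions of closedness in $L^{p}(\mu)$ lie outside this lemma.
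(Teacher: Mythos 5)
Your proof is correct and follows essentially the same route as the paper's: both arguments reduce closure under addition to showing $\int_{(\mc{F}_{f_{1}}\cup\mc{F}_{f_{2}})^{c}}|f_{1}+f_{2}|^{p}\,d\mu=0$ and then invoke equation \eqref{e3.2} of Lemma \ref{L1}. The only cosmetic difference is that the paper obtains this vanishing via the triangle (Minkowski) inequality over $(\mc{F}_{f_{1}}\cup\mc{F}_{f_{2}})^{c}$, whereas you use the equivalent a.e.-vanishing reformulation of membership in $G^{p}$.
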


\begin{proof}
	Let $f_{1}, f_{2}\in G^{p}$ and notice that 
	\begin{eqnarray*}
	\left(\int_{  (\mc{F}_{f_{1}}\cup\mc{F}_{f_{2}})^c  }|f_{1}+f_{2}|^{p} \ d\mu\right)^{1/p}
	&\leq &
	\left(\int_{  (\mc{F}_{f_{1}}\cup\mc{F}_{f_{2}})^c  }|f_{1}|^{p} \ d\mu\right)^{1/p}
	+
	\left(\int_{  (\mc{F}_{f_{1}}\cup\mc{F}_{f_{2}})^c  }|f_{2}|^{p} \ d\mu\right)^{1/p}\\
	&\leq &
	\left(\int_{  \mc{F}_{f_{1}}^c  }|f_{1}|^{p} \ d\mu\right)^{1/p}
	+
	\left(\int_{  \mc{F}_{f_{2}}^c  }|f_{2}|^{p} \ d\mu\right)^{1/p}=0,
        \end{eqnarray*}
        where the equality follows from the fact that $f_{1}, f_{2}\in G^{p}$. 
        So,
        $$
        \int_{  (\mc{F}_{f_{1}}\cup\mc{F}_{f_{2}})^c  }|f_{1}+f_{2}|^{p} \ d\mu = 0.
        $$
        By this and Lemma \ref{L1}, it follows that
	\begin{align*}
	\int_{\R^\N}|f_{1}+f_{2}|^{p} \ d\mu & =
	\int_{  \mc{F}_{f_{1}}\cup\mc{F}_{f_{2}}  }|f_{1}+f_{2}|^{p} \ d\mu+
	\int_{  (\mc{F}_{f_{1}}\cup\mc{F}_{f_{2}})^c  }|f_{1}+f_{2}|^{p} \ d\mu \\
	& =\int_{  \mc{F}_{f_{1}+f_{2}}  }|f_{1}+f_{2}|^{p} \ d\mu.
	\end{align*}
This proves that $f_{1}+f_{2}\in G^{p}$. If $f\in G^{p}$ and $k\in\R$, it is obvious that $k f\in G^{p}$. Therefore, $G^{p}$ is a subspace of $L^{p}(\mu)$.
\end{proof}

\noindent The next result characterizes the spaces $G^{p}$, $1\leq p < \infty$ up to isometric isomorphism. Firstly, we need some preliminary definitions. For each $\mathfrak{a}=(a_{n})_{n\in\mathbb{N}}\in I$, we define the translation $T_{\mathfrak{a}}: \R^{\N}  \to \R^{\N}$ as
\begin{equation*}
	T_{\mf{a}}(x_{n})_{n\in\mathbb{N}}:=(x_{n}-a_{n})_{n\in\mathbb{N}}.
\end{equation*}
It is clear that $T_{\mathfrak{a}}(\mc{C}_{\mathfrak{a}})=\mc{H}$, where $\mc{H}:=[0,1)^{\N}$ is the Hilbert cube. Moreover, since $\mu$ is translation invariant, we have that
\begin{equation}\label{II1}
	\mu(A)=\mu(T_{\mathfrak{a}}(A))=\mu(T_{\mathfrak{a}}^{-1}(A)) \text{ for each } A\in \mathcal{B}_{\infty}.
\end{equation}
The main result of this subsection is the following.
\begin{theorem}\label{T4.2}
	The following spaces are isometrically isomorphic for $1\leq p <\infty$,
	\begin{equation*}
		G^{p}\cong \ell^{p}(\co, L^{p}[0,1]).
	\end{equation*}
\end{theorem}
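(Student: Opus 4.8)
The plan is to show that the decomposition operator
\[
T\colon G^{p}\longrightarrow \left(\bigoplus_{\mf{a}\in I}L^{p}(\mc{C}_{\mf{a}})\right)_{p},\qquad T(f):=(f\upharpoonleft\mc{C}_{\mf{a}})_{\mf{a}\in I},
\]
is an isometric isomorphism, and then to identify the target space with $\ell^{p}(\co,L^{p}[0,1])$. This parallels the proof of Theorem \ref{TeoPri}, the crucial difference being that here the decomposition identity $\int_{\R^{\N}}|f|^{p}\,d\mu=\sum_{\mf{a}\in I}\int_{\mc{C}_{\mf{a}}}|f|^{p}\,d\mu$ is no longer automatic for all $f\in L^{p}(\mu)$; instead, the subspace $G^{p}$ is defined to be exactly the set of functions for which it holds. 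Thus $G^{p}$ plays here the role that the full space played under CH via Proposition \ref{Le5.1}.

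First I would check that $T$ is a well-defined isometry. For $f\in G^{p}$, Theorem \ref{T4.1} gives that $\mc{O}_{f}$ is countable, and using the defining property of $G^{p}$,
\[
\|T(f)\|_{\ell^{p}}^{p}=\sum_{\mf{a}\in I}\|f\upharpoonleft\mc{C}_{\mf{a}}\|_{L^{p}}^{p}=\sum_{\mf{a}\in\mc{O}_{f}}\int_{\mc{C}_{\mf{a}}}|f|^{p}\,d\mu=\int_{\R^{\N}}|f|^{p}\,d\mu=\|f\|_{L^{p}}^{p}<\infty,
\]
where the terms with $\mf{a}\notin\mc{O}_{f}$ vanish. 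This simultaneously shows that $T(f)$ lies in the $\ell^{p}$-sum and that $T$ is isometric, hence injective.

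For surjectivity, given $\bar{f}=(f_{\mf{a}})_{\mf{a}\in I}$ in the $\ell^{p}$-sum, the set $S:=\{\mf{a}\in I:\|f_{\mf{a}}\|_{L^{p}}\neq 0\}$ is countable because $\sum_{\mf{a}\in I}\|f_{\mf{a}}\|_{L^{p}}^{p}<\infty$. Define $f:=\sum_{\mf{a}\in S}f_{\mf{a}}\,\chi_{\mc{C}_{\mf{a}}}$ (extending each $f_{\mf{a}}$ by zero off $\mc{C}_{\mf{a}}$), which is a genuine Borel function since $S$ is countable and the cubes are pairwise disjoint. Because $f$ vanishes off the countable disjoint union $\bigsqcup_{\mf{a}\in S}\mc{C}_{\mf{a}}$, countable additivity yields $\|f\|_{L^{p}}^{p}=\sum_{\mf{a}\in S}\|f_{\mf{a}}\|_{L^{p}}^{p}<\infty$, so $f\in L^{p}(\mu)$; moreover $\mc{O}_{f}=S$ and $\int_{\R^{\N}}|f|^{p}\,d\mu=\sum_{\mf{a}\in\mc{O}_{f}}\int_{\mc{C}_{\mf{a}}}|f|^{p}\,d\mu$, that is, $f\in G^{p}$, and clearly $T(f)=\bar{f}$. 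The step requiring care here is verifying that the preimage $f$ lands in $G^{p}$ and not merely in $L^{p}(\mu)$; this is precisely where the countability of $S$, and hence of the carrier of $f$, is used.

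It remains to identify the target with $\ell^{p}(\co,L^{p}[0,1])$. Each $(\mc{C}_{\mf{a}},\mu)$ has total mass $1$, so $L^{p}(\mc{C}_{\mf{a}})$ is separable by Lemma \ref{Le1}; by the translation invariance \eqref{II1} the map $T_{\mf{a}}$ carries $(\mc{C}_{\mf{a}},\mu)$ measure-preservingly onto the Hilbert cube $(\mc{H},\mu)$, and $(\mc{C}_{\mf{a}},\mu)$ is purely nonatomic by Proposition \ref{prop:purely-nonatomic-finite}. Hence the classification of separable finite purely nonatomic $L^{p}$-spaces \cite[Cor.~p.~128]{La} gives $L^{p}(\mc{C}_{\mf{a}})\cong L^{p}[0,1]$ for every $\mf{a}\in I$. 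Assembling these isometries coordinatewise identifies $\left(\bigoplus_{\mf{a}\in I}L^{p}(\mc{C}_{\mf{a}})\right)_{p}\cong\ell^{p}(I,L^{p}[0,1])$, and since $|I|=|\Z^{\N}|=\aleph_{0}^{\aleph_{0}}=\co$ in ZFC, the right-hand side equals $\ell^{p}(\co,L^{p}[0,1])$. Combining this with the isomorphism $G^{p}\cong\left(\bigoplus_{\mf{a}\in I}L^{p}(\mc{C}_{\mf{a}})\right)_{p}$ completes the argument. I do not expect a serious obstacle: the reasoning is essentially that of Theorem \ref{TeoPri} restricted to the subspace $G^{p}$, and the only genuinely new points are the closure of $G^{p}$ under the preimage construction in the surjectivity step and the ZFC cardinal computation $|\Z^{\N}|=\co$, which is exactly what makes this result independent of CH.
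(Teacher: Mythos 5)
Your proposal is correct and takes essentially the same approach as the paper: the paper's operator is $\Psi(f)=(f\circ T_{\mf{a}}^{-1})_{\mf{a}\in I}$ mapping into $\ell^{p}(I,L^{p}(\mc{H}))$, i.e., it folds the translation onto the Hilbert cube into the operator itself, whereas you restrict first and perform the translation/identification coordinatewise at the end --- a purely cosmetic reordering. All substantive steps coincide: the isometry via the defining identity of $G^{p}$, surjectivity via countable supports together with the check that the preimage lies in $G^{p}$, the identification $L^{p}(\mc{C}_{\mf{a}})\cong L^{p}[0,1]$ via Lemma \ref{Le1}, Proposition \ref{prop:purely-nonatomic-finite} and Lacey's corollary, and the ZFC computation $|I|=|\Z^{\N}|=\co$.
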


\begin{proof}
	Fix $1\leq p <\infty$ and define the operator
	\begin{equation*}
		\Psi:  G^{p}  \longrightarrow  \ell^{p}(I, L^{p}(\mc{H})), \quad \Psi(f):=\left(f\circ T^{-1}_{\mathfrak{a}}\right)_{\mathfrak{a}\in I}.
	\end{equation*}
	Given $f\in G^{p}$, we have
	\begin{align*}
		\|f\|^{p}_{L^{p}}=\int_{\mathbb{R}^{\mathbb{N}}}|f|^{p} \ d\mu= \sum_{\mathfrak{a}\in\mathcal{O}_{f}}\int_{\mc{C}_{\mf{a}}}|f|^{p} \ d\mu.
	\end{align*}
	For each $\mathfrak{a}\in I$, by the change of variable formula and identity \eqref{II1}, we deduce
	\begin{equation*}
		\int_{\mc{C}_{\mf{a}}}|f|^{p} \ d\mu =\int_{T_{\mathfrak{a}}(\mc{C}_{\mf{a}})}|(f\circ T^{-1}_{\mathfrak{a}})(x)|^{p} \ d\mu(T^{-1}_{\mathfrak{a}}(x))=\int_{\mc{H}}|f\circ T^{-1}_{\mathfrak{a}}|^{p} \ d\mu.
	\end{equation*}
	Therefore, 
	\begin{equation*}
		\|f\|^{p}_{L^{p}}= \sum_{\mathfrak{a}\in\mathcal{O}_{f}}\int_{\mc{C}_{\mf{a}}}|f|^{p} \ d\mu=\sum_{\mathfrak{a}\in\mathcal{O}_{f}}\int_{\mc{H}}|f\circ T^{-1}_{\mathfrak{a}}|^{p} \ d\mu= \sum_{\mf{a}\in I}\|f\circ T^{-1}_{\mf{a}}\|_{L^{p}(\mc{H})}^p=\|\Psi(f)\|_{\ell^{p}}^{p}.
	\end{equation*}
	Consequently $\Psi$ is well defined and it is an isometry. Finally, we see that $\Psi$ is onto. Let $\bar{f}=(f_{\mathfrak{a}})_{\mathfrak{a}\in I}\in \ell^{p}(I, L^{p}(\mc{H}))$ and denote by $\text{supp}(\bar{f})$ the support of $\bar{f}$, that is,
	$$
	\text{supp}(\bar{f}):=\{\mf{a}\in I \; : \; \|f_{\mf{a}}\|_{L^{p}(\mc{H})}\neq 0\}.
	$$
	Clearly, by the definition of $\ell^{p}(I, L^{p}(\mc{H}))$, the set $\text{supp}(\bar{f})$ is countable. Define
	\begin{equation*}
		f:=\sum_{\mathfrak{a}\in\text{supp}(\bar{f})}(f_{\mathfrak{a}}\circ T_{\mathfrak{a}})\cdot\chi_{\mc{C}_{\mathfrak{a}}}.
	\end{equation*}
	By the monotone convergence theorem and the change of variable formula, it follows that
	\begin{align*}
		\int_{\mathbb{R}^{\mathbb{N}}}|f|^{p} \ d\mu &= \int_{\mathbb{R}^{\mathbb{N}}} \sum_{\mathfrak{a}\in \text{supp}(\bar{f})}|f_{\mathfrak{a}}\circ T_{\mathfrak{a}}|^{p}\cdot\chi_{\mc{C}_{\mf{a}}} \ d\mu=\sum_{\mathfrak{a}\in\text{supp}(\bar{f})}\int_{\mc{C}_{\mf{a}}}|f_{\mathfrak{a}}\circ T_{\mathfrak{a}}|^{p} \ d\mu \\
		&=\sum_{\mathfrak{a}\in\text{supp}(\bar{f})}\int_{\mc{H}}|f_{\mathfrak{a}}|^{p} \ d\mu=\|\bar{f}\|_{\ell^{p}}^{p}<\infty.
	\end{align*}
	Therefore, $f\in L^{p}(\mu)$. Moreover, $f\in G^{p}$ since
		\begin{align*}
				\int_{\mathbb{R}^{\mathbb{N}}}|f|^{p} \ d\mu = \sum_{\mathfrak{a}\in\text{supp}(\bar{f})}\int_{\mc{C}_{\mf{a}}}|f_{\mathfrak{a}}\circ T_{\mathfrak{a}}|^{p} \ d\mu = \sum_{\mathfrak{a}\in\text{supp}(\bar{f})}\int_{\mc{C}_{\mf{a}}}|f|^{p} \ d\mu = \sum_{\mathfrak{a}\in\mc{O}_{f}}\int_{\mc{C}_{\mf{a}}}|f|^{p} \ d\mu,
		\end{align*}
	where in the last equality we have used that $\supp(\bar{f})=\mc{O}_{f}$. Indeed, the latter follows from the definition of $\supp(\bar{f})$ and $\mc{O}_{f}$ and the following identities
   \begin{equation*}
   	\int_{\mc{C}_{\mf{a}}}|f|^{p}\; d\mu = \int_{\mc{C}_{\mf{a}}}|f_{\mathfrak{a}}\circ T_{\mathfrak{a}}|^{p} \ d\mu = \int_{\mc{H}}|f_{\mf{a}}|^{p}\ d\mu = \|f_{\mf{a}}\|_{L^{p}(\mc{H})}^{p}.
   \end{equation*}
 It is straightforward to verify that $\Psi(f)=\bar{f}$. Hence, $\Psi$ is an isometric isomorphism. 
	\par Finally, as $\mu(\mc{H})=1$, by Lemma \ref{Le1}, it follows that $L^{p}(\mc{H})$ is separable. Therefore, by \cite[Cor.~p.~128]{La}, $L^{p}(\mc{H})$ is isometrically isomorphic to $L^{p}[0,1]$ for each $1\leq p <\infty$ and we conclude that $G^{p}$ is isometrically isomorphic to $\ell^{p}(\co, L^{p}[0,1])$. 
\end{proof}

\subsection{Study of $B^{p}$} \label{S3}

This subsection is devoted to the study of the sets $B^{p}$ for $1\leq p<\infty$. These sets contain the \textit{pathological} behavior of functions in $L^{p}(\mu)$. We describe some structure result of these sets. We start by proving that $B^{p}$ are Banach spaces.

\begin{lemma}
	For every $1\leq p <\infty$, $B^{p}$ is a Banach space.
\end{lemma}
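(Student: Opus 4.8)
The plan is to show that $B^{p}$ is a closed linear subspace of $L^{p}(\mu)$; since $L^{p}(\mu)$ is complete, every closed subspace is itself a Banach space in the inherited norm, and the result follows. I would first record the trivial but essential observation that for any $f\in L^{p}(\mu)$ and any $\mf{a}\in I$ the quantity $\int_{\mc{C}_{\mf{a}}}|f|^{p}\,d\mu$ is finite, being bounded above by $\|f\|_{L^{p}}^{p}$, so all the expressions appearing in the definition of $B^{p}$ are well defined.

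For linearity, homogeneity is immediate: if $f\in B^{p}$ and $k\in\R$, then $\int_{\mc{C}_{\mf{a}}}|kf|^{p}\,d\mu=|k|^{p}\int_{\mc{C}_{\mf{a}}}|f|^{p}\,d\mu=0$ for every $\mf{a}\in I$, whence $kf\in B^{p}$. For closure under addition, given $f_{1},f_{2}\in B^{p}$ I would apply Minkowski's inequality on each individual cube:
$$
\left(\int_{\mc{C}_{\mf{a}}}|f_{1}+f_{2}|^{p}\,d\mu\right)^{1/p}\leq \left(\int_{\mc{C}_{\mf{a}}}|f_{1}|^{p}\,d\mu\right)^{1/p}+\left(\int_{\mc{C}_{\mf{a}}}|f_{2}|^{p}\,d\mu\right)^{1/p}=0,
$$
so that $\int_{\mc{C}_{\mf{a}}}|f_{1}+f_{2}|^{p}\,d\mu=0$ for every $\mf{a}$, i.e.\ $f_{1}+f_{2}\in B^{p}$.

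The key step is closedness. For each fixed $\mf{a}\in I$ I would introduce the seminorm $\rho_{\mf{a}}(f):=\|f\,\chi_{\mc{C}_{\mf{a}}}\|_{L^{p}}=\left(\int_{\mc{C}_{\mf{a}}}|f|^{p}\,d\mu\right)^{1/p}$ on $L^{p}(\mu)$. Since $\rho_{\mf{a}}(f)\leq\|f\|_{L^{p}}$, the reverse triangle inequality yields $|\rho_{\mf{a}}(f)-\rho_{\mf{a}}(g)|\leq\rho_{\mf{a}}(f-g)\leq\|f-g\|_{L^{p}}$, so each $\rho_{\mf{a}}$ is continuous on $L^{p}(\mu)$. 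Consequently $B^{p}=\bigcap_{\mf{a}\in I}\rho_{\mf{a}}^{-1}(\{0\})$ is an intersection of closed sets (preimages of the closed singleton $\{0\}$ under continuous maps) and is therefore closed. Equivalently, in sequential terms: if $f_{n}\in B^{p}$ with $f_{n}\to f$ in $L^{p}(\mu)$, then $\rho_{\mf{a}}(f)=\lim_{n\to\infty}\rho_{\mf{a}}(f_{n})=0$ for every $\mf{a}\in I$, so $f\in B^{p}$.

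I do not anticipate a genuine obstacle: the whole argument rests on the continuity of the restriction-to-a-cube seminorms $\rho_{\mf{a}}$ together with the completeness of $L^{p}(\mu)$. The single conceptual point worth emphasizing is that the completeness of $B^{p}$ is \emph{not} automatic from its being a linear subspace; it genuinely requires the closedness verified above. Notably, the uncountability of the index set $I$ causes no difficulty here, since an arbitrary intersection of closed sets remains closed.
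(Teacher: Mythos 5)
Your proposal is correct, and its overall strategy coincides with the paper's: both show that $B^{p}$ is a linear subspace of $L^{p}(\mu)$ (homogeneity trivially, additivity via Minkowski on each cube) and then that it is closed, so completeness is inherited from $L^{p}(\mu)$. The one genuine difference is in how closedness is executed, and here your version is actually the more careful one. The paper argues sequentially from the displayed inequality
\[
\left|\int_{\mc{C}_{\mf{a}}}|f_{n}|^{p}\,d\mu-\int_{\mc{C}_{\mf{a}}}|f|^{p}\,d\mu\right|\leq\int_{\mc{C}_{\mf{a}}}|f_{n}-f|^{p}\,d\mu,
\]
which, as literally written, is false for $p>1$: taking $f_{n}\equiv 2$ and $f\equiv 1$ on a cube of measure $1$ with $p=2$ gives $3\leq 1$. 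Your route through the seminorms $\rho_{\mf{a}}(f)=\bigl(\int_{\mc{C}_{\mf{a}}}|f|^{p}\,d\mu\bigr)^{1/p}$ avoids exactly this pitfall: the reverse triangle inequality $|\rho_{\mf{a}}(f)-\rho_{\mf{a}}(g)|\leq\rho_{\mf{a}}(f-g)\leq\|f-g\|_{L^{p}}$ is valid for all $1\leq p<\infty$ because it is applied to the unpowered norms, and then $B^{p}=\bigcap_{\mf{a}\in I}\rho_{\mf{a}}^{-1}(\{0\})$ is closed as an intersection of zero sets of continuous maps (the uncountability of $I$ being harmless, as you note). The paper's conclusion is of course correct — its step is repaired precisely by passing to the $1/p$-th powers as you do — so the two proofs are morally the same, but yours is the one that compiles without correction for every $p$.
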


\begin{proof}
	It is easy to verify that $B^{p}$ is a vector subspace of $L^{p}(\mu)$. It remains to prove that it is closed. Let $(f_{n})_{n\in\N}\subset B^{p}$ such that $f_{n}\to f$ as $n\to\infty$ in $L^{p}(\mu)$ for some $f\in L^{p}(\mu)$. Then, we obtain
	\begin{align*}
		\left|\int_{\mc{C}_{\mf{a}}}|f_{n}|^{p}\; d\mu - \int_{\mc{C}_{\mf{a}}}|f|^{p}\; d\mu\right|\leq \int_{\mc{C}_{\mf{a}}}|f_{n}-f|^{p} \; d\mu \leq \int_{\R^{\N}}|f_{n}-f|^{p}\; d\mu \xrightarrow[n\to\infty]{} 0.
	\end{align*}
	Therefore, we deduce
	$$
	0 = \lim_{n\to\infty}\int_{\mc{C}_{\mf{a}}}|f_{n}|^{p}\ d\mu = \int_{\mc{C}_{\mf{a}}} |f|^{p} \ d\mu,
	$$
	for each $\mf{a}\in I$. This proves that $f\in B^{p}$ and the proof is concluded.
\end{proof}

The main result of this section is the following.
\begin{theorem}
	For each $1\leq p < \infty$, $B^{p}$ contains an isometric and complemented copy of $\ell^{p}(\co,L^{p}[0,1])$.
\end{theorem}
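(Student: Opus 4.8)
The plan is to exhibit $\co$ many pairwise disjoint finite rectangles, each lying in the ``bad region'' (so that $L^{p}$-functions supported on them belong to $B^{p}$) and each carrying a full copy of $L^{p}[0,1]$, and then to assemble them into an isometric copy of $\ell^{p}(\co,L^{p}[0,1])$ together with an explicit norm-one projection.

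First I would split the coordinates $\N=A\sqcup B$ into two infinite blocks, say the evens and the odds. For each $s=(s_{i})_{i\in A}\in\{0,1\}^{A}$ I define the rectangle $S_{s}:=\bigtimes_{i\in\N}J^{s}_{i}$ by setting $J^{s}_{i}:=[-\tfrac12,\tfrac12)$ for $i\in B$ and $J^{s}_{i}:=[2s_{i},2s_{i}+1)$ for $i\in A$ (that is, $[0,1)$ if $s_{i}=0$ and $[2,3)$ if $s_{i}=1$). Since every factor has length $1$, each $S_{s}\in\mc{F}(\mc{B},\l)$ with $\mu(S_{s})=1$. Two observations are key. \emph{Disjointness:} if $s\neq s'$ they differ at some $i\in A$, where $[0,1)\cap[2,3)=\varnothing$, so $S_{s}\cap S_{s'}=\varnothing$; as $|\{0,1\}^{A}|=\co$ this yields $\co$ pairwise disjoint pieces. \emph{Badness:} for every $\mf{a}\in I$ and every $i\in B$ one has $\l\big([-\tfrac12,\tfrac12)\cap[a_{i},a_{i}+1)\big)\leq\tfrac12$, and since $B$ is infinite the product $\mu(S_{s}\cap\mc{C}_{\mf{a}})=\prod_{i}\l(J^{s}_{i}\cap[a_{i},a_{i}+1))$ vanishes. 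Hence $\mu(S_{s}\cap\mc{C}_{\mf{a}})=0$ for every $\mf{a}\in I$, so any $f\in L^{p}(\mu)$ supported on $S_{s}$ satisfies $\int_{\mc{C}_{\mf{a}}}|f|^{p}\,d\mu=0$, i.e. belongs to $B^{p}$. By Proposition~\ref{prop:purely-nonatomic-finite} each $(S_{s},\mu)$ is purely nonatomic, and by Lemma~\ref{Le1} the space $L^{p}(S_{s})$ is separable; thus $L^{p}(S_{s})\cong L^{p}[0,1]$ by \cite[Cor.~p.~128]{La}.

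Next I would build the embedding $\Phi\colon\ell^{p}(\{0,1\}^{A},L^{p}[0,1])\to B^{p}$, given by $\Phi\big((g_{s})_{s}\big):=\sum_{s}\tilde{g}_{s}$, where $\tilde{g}_{s}$ is $g_{s}$ transported to $S_{s}$ and extended by $0$ outside $S_{s}$. For an element of $\ell^{p}$ only countably many $g_{s}$ are nonzero, so the support of $\Phi((g_{s}))$ is a countable, hence $\sigma$-finite, union of the $S_{s}$; on such a union $\mu$ is countably additive, which gives $\|\Phi((g_{s}))\|^{p}_{L^{p}}=\sum_{s}\|g_{s}\|^{p}$ and shows $\Phi$ is an isometry. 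Its range $X:=\Phi\big(\ell^{p}(\{0,1\}^{A},L^{p}[0,1])\big)\subset B^{p}$ is therefore an isometric copy of $\ell^{p}(\co,L^{p}[0,1])$.

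Finally, for complementation I would define $P\colon L^{p}(\mu)\to X$ by $Pf:=\sum_{s}f\cdot\chi_{S_{s}}$. The crucial point, argued exactly as in Theorem~\ref{T4.1}, is that $\{s:\int_{S_{s}}|f|^{p}\,d\mu\neq 0\}$ is countable: an uncountable such set would contain an infinite subfamily on which the integrals are bounded below by some $\varepsilon>0$, forcing $\int_{\R^{\N}}|f|^{p}\,d\mu=\infty$. Thus the sum defining $Pf$ runs over countably many disjoint pieces, converges in $L^{p}$, lands in $X$, and satisfies $\|Pf\|_{L^{p}}^{p}=\sum_{s}\int_{S_{s}}|f|^{p}\,d\mu\leq\int_{\R^{\N}}|f|^{p}\,d\mu$, so $\|P\|\leq 1$; since $Pf=f$ for $f\in X$ and $P^{2}=P$, the map $P$ is a norm-one projection of $L^{p}(\mu)$ onto $X$, and its restriction projects $B^{p}$ onto $X$. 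The main obstacle is precisely the interaction between the uncountable index set and the failure of uncountable additivity of $\mu$: the two-block design of the $S_{s}$ (an infinite block encoding the continuum, and an infinite block of half-integer-straddling intervals forcing every single-cube overlap to be an infinite product of factors $\leq\tfrac12$) is what makes the pieces simultaneously $\co$-many, pairwise disjoint, and ``invisible'' to every unit cube, while the countability argument for the index set $\{s:\int_{S_{s}}|f|^{p}\,d\mu\neq0\}$ is what keeps the projection $P$ well defined.
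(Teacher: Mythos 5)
Your proof is correct, and it follows the same overall architecture as the paper's: produce continuum many pairwise disjoint finite rectangles of measure one, each having $\mu$-null intersection with every unit cube $\mc{C}_{\mf{a}}$ (so that functions supported on them lie in $B^{p}$), identify each $L^{p}$ of a rectangle with $L^{p}[0,1]$ via Lemma \ref{Le1}, Proposition \ref{prop:purely-nonatomic-finite} and Lacey's corollary, embed the $\ell^{p}$-sum by summing over the countable support, and project back using the countability of $\{s : \int_{S_{s}}|f|^{p}\,d\mu\neq 0\}$. Where you genuinely diverge is in the construction of the family itself. The paper takes an almost disjoint family $\mc{A}\subset\mathcal{P}(\N)$ of cardinality $\co$ and builds, for each $A\in\mc{A}$, a set $\mc{I}_{A}\subset\R$ of Lebesgue measure $1$ made of dyadically shrinking intervals, setting $C_{A}:=\bigtimes_{n\in\N}\mc{I}_{A}$; distinctness of $A,B$ then only yields $\mu(C_{A}\cap C_{B})=0$, an a.e.-disjointness that must be proved (equation \eqref{e} in the paper) and invoked repeatedly (well-definedness of the embedding, the computation $\mc{A}_{g_{B}}=\{B\}$ in the projection argument). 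Your two-block design --- the continuum encoded in $\{0,1\}^{A}$ through the choice $[0,1)$ versus $[2,3)$ on even coordinates, and the fixed half-integer-straddling factor $[-\tfrac12,\tfrac12)$ on odd coordinates --- gives \emph{genuinely} pairwise disjoint rectangles, which makes the disjointness and the projection computations automatic, while the infinite block of factors of length $\le\tfrac12$ plays exactly the role of the paper's factors $2^{-k_{n}}$ in killing every intersection $\mu(S_{s}\cap\mc{C}_{\mf{a}})$. In short, your construction is simpler and eliminates one lemma; the paper's almost-disjoint-family construction achieves the same end with sets that all share the identical factor $\mc{I}_{A}$ in every coordinate, at the cost of working modulo null sets throughout.
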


\begin{proof}
	Recall that two infinite subsets $A, B\subset \N$ are called \textit{almost disjoint} if $|A\cap B|<\infty$. It is well known that there exists a family of almost disjoint subsets $\mc{A}$ of $\N$ with $|\mc{A}|=\mf{c}$.\footnote{For every $x\in \mathbb R$, fix a sequence of rational numbers $(q_{x,n})_{n\in \N}$ that converges to $x$.
	Then, by considering a fixed bijection $f:\Q \to \N$, we have that the sets $A_x:=\{ f(q_{x,n}) : n\in \N \}$ form an almost disjoint family of subsets of $\N$ having cardinality $\mathfrak{c}$.}  For every $A\in \mc{A}$, let us denote 
	$$
	A:=\{n^{A}_{1}<n^{A}_{2}<\cdots\}.
	$$
	Then, for each $A\in\mc{A}$ and each $k\in\N$, let us consider two subintervals $I^{A}_{k}\subset (n^{A}_{k}-1,n^{A}_{k})$, $J^{A}_{k}\subset (-n^{A}_{k},1-n^{A}_{k})$ of length $2^{-k-1}$. Consider the Borel set
	$$
	\mc{I}_{A}:= \bigsqcup_{k\in\N}(I^{A}_{k} \sqcup J^{A}_{k})\in\mc{B}.
	$$
	Then, by the $\s$-additivity of the Lebesgue measure $\l$, we obtain that
	$$
	\l(\mc{I}_{A})=\sum_{k\in\N}(\l(I^{A}_{k})+\l(J^{A}_{k}))=\sum_{k\in\N}2^{-k}=1.
	$$
	For each $A\in\mc{A}$, consider the associated rectangle 
	$$
	C_{A}:=\bigtimes_{n\in\N}\mc{I}_{A}.
	$$
	Then, $C_{A}\in \mc{F}(\mc{B},\l)$ and
	$$
	\mu(C_{A})=\prod_{n\in\N}\l(\mc{I}_{A})=1.
	$$
	Note that given two distinct sets $A,B\in \mc{A}$, necessarily $|A\cap B|<\infty$ and, therefore, $\mc{I}_{A}\cap\mc{I}_{B}$ is a finite union of intervals. Consequently $\l(\mc{I}_{A}\cap\mc{I}_{B})<1$. This implies that
	\begin{equation}
		\label{e}
		\mu(C_{A}\cap C_{B})  = \mu\left(\left(\bigtimes_{n\in\N}\mc{I}_{A}\right)\cap \left(\bigtimes_{n\in\N}\mc{I}_{B}\right)\right)  = \mu \left( \bigtimes_{n\in\N}\mc{I}_{A}\cap\mc{I}_{B} \right) = \prod_{n\in\N}\l(\mc{I}_{A}\cap\mc{I}_{B})=0.
	\end{equation}
    Moreover, for each $\mf{a}\in I$ and each $A\in \mc{A}$, we have $\mu(\mc{C}_{\mf{a}}\cap C_{A})=0$. Indeed, by the definition of $\mc{I}_{A}$, for each $n\in \N$, there exists $k_{n}\in \N$ such that 
    	$$
    	\l([a_{n},a_{n}+1)\cap \mc{I}_{A})\leq 2^{-k_{n}},
    	$$
    where $\mc{C}_{\mf{a}}=\bigtimes_{n\in\N}[a_{n},a_{n}+1)$. Therefore,
    \begin{align*}
    	\mu(\mc{C}_{\mf{a}}\cap C_{A}) = \mu\left(\bigtimes_{n\in\N}[a_{n},a_{n}+1)\cap \mc{I}_{A}\right) = \prod_{n\in\N}\l([a_{n},a_{n}+1)\cap \mc{I}_{A})\leq \prod_{n\in\N}2^{-k_{n}}=0.
    \end{align*}
	On the one hand, note that, by definition, every $$\bar{f}=(f_{A})_{A\in\mc{A}}\in \left(\bigoplus_{A\in\mc{A}}L^{p}(C_{A})\right)_{p}$$ has countable support  $\text{supp}(\bar{f})$, where
	$$
	\text{supp}(\bar{f}):=\left\{A\in\mc{A} \; : \; \|f_{A}\|_{L^p(C_{A})} \neq 0\right\}.
	$$
	We introduce the linear operator
	$$
	T:\left(\bigoplus_{A\in\mc{A}}L^{p}(C_{A})\right)_{p} \longrightarrow B^{p}, \quad T(f_{A})_{A\in\mc{A}} := \sum_{A\in\text{supp}(\bar{f})}f_{A}\chi_{C_{A}}.
	$$
    Since for each $\mf{a}\in I$ and each $A\in\mc{A}$, the identity $\mu(\mc{C}_{\mf{a}}\cap C_{A})=0$ holds, we deduce that
    \begin{align*}
    \int_{\mc{C}_{\mf{a}}}|T(f_{A})_{A\in \mc{A}}|^{p} \; d\mu = \sum_{A\in\text{supp}(\bar{f})} \int_{\mc{C}_{\mf{a}}\cap C_{A}}|f_{A}|^{p} \; d\mu =0,
    \end{align*}
    which implies that $T(f_{A})_{A\in\mc{A}}\in B^{p}$. Moreover, 
	\begin{align*}
	\|T(f_{A})_{A\in \mc{A}}\|_{L^{p}}^{p}=\int_{\R^{\N}}\left| \sum_{A\in\text{supp}(\bar{f})}f_{A}\;\chi_{C_{A}}\right|^{p}\; d\mu = \sum_{A\in\text{supp}(\bar{f})} \|f_{A}\|^{p}_{L^{p}(C_{A})}=\|(f_{A})_{A}\|_{\ell^{p}}^{p}.
	\end{align*}
	This proves that $T$ is well defined and that it is an isometry. On the other hand, as $\mu(C_{A})=1$ for each $A\in\mc{A}$, by Lemma \ref{Le1}, $L^{p}(C_{A})$ is isometrically isomorphic to $L^{p}[0,1]$. Therefore,
	$$
	\left(\bigoplus_{A\in\mc{A}}L^{p}(C_{A})\right)_{p}\cong \ell^{p}(\mathfrak{c},L^{p}[0,1]).
	$$
	Consequently, $\ell^{p}(\mathfrak{c},L^{p}[0,1])$ embeds isometrically into $B^{p}$. 
	\par 	Finally, let us prove that 
	\begin{align*}
		V:=\text{Im}(T) =\left\{\sum_{A\in \supp(\bar{f})}f_{A}\chi_{C_{A}} \; : \; \bar{f}=(f_{A})_{A\in\mc{A}}\in \left(\bigoplus_{A\in\mc{A}}L^{p}(C_{A})\right)_{p}\right\} \cong\ell^{p}(\mathfrak{c},L^{p}[0,1])
	\end{align*}
	is a complemented subspace of $B^{p}$.
	Let us start by showing that for each $f\in B^{p}$, the set
	$$\mc{A}_{f}:=\left\{A\in \mc{A} \; : \; \|f \restriction C_{A}\|_{L^{p}(C_{A})}\neq 0\right\}$$ 
	is countable.
	If $\mc{A}_{f}$ were uncountable, then there would be $\varepsilon>0$ and infinite countable $F\subset  \mc{A}_{f}$ such that 
	$ \int_{C_{A}}|f|^{p} \  d\mu\geq \varepsilon$ for all $A\in F$. Then
	\[
	\int_{\mathbb{R}^{\mathbb{N}}} |f|^{p} \ d\mu
	\geq \sum_{A\in F}\int_{C_{A}} |f|^{p} \ d\mu
	\geq \sum_{A\in F}\varepsilon =\infty,
	\]
	contradicting $f\in L^{p}(\mu)$. Given $f\in B^{p}$, we define the sequence 
	$$
	\bar{f}:=(f_{A})_{A\in \mc{A}}, \quad f_{A}:=\left\{\begin{array}{ll}
		f \restriction C_{A} & \text{if} \; A\in \mc{A}_{f} \\
		0 & \text{if} \; A \notin \mc{A}_{f},
	\end{array}
\right.
	$$
	and the subset
	$$
	\mf{A}_{f}:=\bigcup_{A\in \mc{A}_{f}}C_{A}.
	$$
	Then $\bar{f}\in \left(\bigoplus_{A\in\mc{A}}L^{p}(C_{A})\right)_{p}$ since
	\begin{align*}
	\sum_{A\in\mc{A}}\|f_{A}\|^{p}_{L^{p}(C_{A})} & =\sum_{A\in\mc{A}_{f}}\|f_{A}\|^{p}_{L^{p}(C_{A})}= \sum_{A\in\mc{A}_{f}} \int_{C_{A}}|f|^{p} \; d\mu \\
		&= \int_{\R^{\N}} |f\chi_{\mf{A}_{f}}|^{p} \; d\mu \leq \int_{\R^{\N}} |f|^{p} \; d\mu = \|f\|_{L^{p}}^{p}<\infty,
	\end{align*}
    where we have used the countability of $\mc{A}_{f}$ and \eqref{e}. Moreover, $\supp(\bar{f})=\mc{A}_{f}$. Consequently, the linear operator $P: B^{p} \to V$ given by
	$$
	Pf:=f \restriction \mf{A}_{f}= \sum_{A\in \mc{A}_{f}}f _{A} \chi_{C_{A}},
	$$
	is well defined. For each $f\in B^{p}$, we have
	$$
	\|Pf\|_{L^{p}}^{p}=\int_{\R^{\N}} |f\chi_{\mf{A}_{f}}|^{p} \; d\mu \leq \int_{\R^{\N}} |f|^{p} \; d\mu = \|f\|_{L^{p}}^{p}.
	$$
	Then, $P$ is bounded. Finally, we prove that it is a projection. Let $f\in B^{p}$ and $B\in \mc{A}_{f}$. Then, denoting $g_{B}:=f_{B}\chi_{C_{B}}$, we have
	\begin{align*}
		\mc{A}_{g_{B}} & =\{A\in \mc{A} : \|f_{B}\chi_{C_{B}}\restriction C_{A}\|_{L^{p}(C_{A})}\neq 0\} \\
		&  = \{A\in \mc{A} : \|f\restriction (C_{A}\cap C_{B})\|_{L^{p}(C_{A})}\neq 0\} = \{B\},
	\end{align*}
	where the last equality follows from \eqref{e}. Therefore, 
	$$
	P(g_{B})=\sum_{A\in \mc{A}_{g_{B}}}(g_{B}\restriction C_{A})\chi_{C_{A}}=(g_{B}\restriction C_{B})\chi_{C_{B}}=g_{B},
	$$
	and this implies that
	\begin{align*}
		P^{2}f=P\left(\sum_{A\in\mc{A}_{f}}g_{A}\right)=\sum_{A\in\mc{A}_{f}}P(g_{A})=\sum_{A\in\mc{A}_{f}}g_{A}=Pf.
	\end{align*}
	Hence, $P$ is a projection and this proves that $V$ is complemented.
\end{proof}

The last result of this section gives the desired decomposition of $L^{p}(\mu)$ in the complemented subspaces $G^{p}$ and $B^{p}$.

\begin{theorem}
	For each $1\leq p <\infty$, the following decomposition into complemented subspaces holds,
	$$
	L^{p}(\mu)=G^{p}\oplus B^{p}\cong \ell^{p}(\co,L^{p}[0,1])\oplus B^{p}.
	$$
\end{theorem}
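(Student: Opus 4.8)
The plan is to exhibit the decomposition explicitly via the \textquotedblleft good part/bad part\textquotedblright{} splitting already implicit in the definitions of $G^{p}$ and $B^{p}$. Given $f\in L^{p}(\mu)$, Theorem \ref{T4.1} guarantees that $\mc{O}_{f}$ is countable, so $\mc{F}_{f}=\bigsqcup_{\mf{a}\in\mc{O}_{f}}\mc{C}_{\mf{a}}$ is a countable union of Borel cubes and hence Borel. I would therefore set $g:=f\chi_{\mc{F}_{f}}$ and $b:=f-g=f\chi_{\mc{F}_{f}^{c}}$, both of which lie in $L^{p}(\mu)$ since $|g|,|b|\le|f|$.

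Next I would verify that this is the desired splitting. For $g$: if $\mf{a}\in\mc{O}_{f}$ then $\mc{C}_{\mf{a}}\subset\mc{F}_{f}$, so $\int_{\mc{C}_{\mf{a}}}|g|^{p}\,d\mu=\int_{\mc{C}_{\mf{a}}}|f|^{p}\,d\mu\neq0$, while if $\mf{a}\notin\mc{O}_{f}$ then $\int_{\mc{C}_{\mf{a}}}|g|^{p}\,d\mu\le\int_{\mc{C}_{\mf{a}}}|f|^{p}\,d\mu=0$; hence $\mc{O}_{g}=\mc{O}_{f}$ and $\mc{F}_{g}=\mc{F}_{f}$. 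Then, by countable additivity over the disjoint union $\mc{F}_{f}$, one gets $\int_{\R^{\N}}|g|^{p}\,d\mu=\int_{\mc{F}_{f}}|f|^{p}\,d\mu=\sum_{\mf{a}\in\mc{O}_{g}}\int_{\mc{C}_{\mf{a}}}|g|^{p}\,d\mu$, which is exactly the defining condition of $G^{p}$, so $g\in G^{p}$. For $b$: if $\mf{a}\in\mc{O}_{f}$ then $\mc{C}_{\mf{a}}\cap\mc{F}_{f}^{c}=\varnothing$ gives $\int_{\mc{C}_{\mf{a}}}|b|^{p}\,d\mu=0$, and if $\mf{a}\notin\mc{O}_{f}$ then $\int_{\mc{C}_{\mf{a}}}|b|^{p}\,d\mu\le\int_{\mc{C}_{\mf{a}}}|f|^{p}\,d\mu=0$; thus $b\in B^{p}$. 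This proves $L^{p}(\mu)=G^{p}+B^{p}$. The sum is direct because any $h\in G^{p}\cap B^{p}$ has $\mc{O}_{h}=\varnothing$ (from $h\in B^{p}$), whence $\int_{\R^{\N}}|h|^{p}\,d\mu=\sum_{\mf{a}\in\mc{O}_{h}}\int_{\mc{C}_{\mf{a}}}|h|^{p}\,d\mu=0$ (from $h\in G^{p}$), so $h=0$.

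Finally I would upgrade this to a topological direct sum. The map $\pi\colon f\mapsto g$ is well defined by the uniqueness of the decomposition just established, and it is linear: if $f=g+b$ and $f'=g'+b'$ are the decompositions, then $g+g'\in G^{p}$ and $b+b'\in B^{p}$ decompose $f+f'$, so uniqueness forces $\pi(f+f')=g+g'$, and similarly for scalar multiples. Moreover, since $\mc{F}_{f}$ and $\mc{F}_{f}^{c}$ partition $\R^{\N}$, we have $\|f\|_{L^{p}}^{p}=\int_{\mc{F}_{f}}|f|^{p}\,d\mu+\int_{\mc{F}_{f}^{c}}|f|^{p}\,d\mu=\|g\|_{L^{p}}^{p}+\|b\|_{L^{p}}^{p}$, so both $\pi$ and $I-\pi$ are norm-one contractions; in particular $G^{p}=\ker(I-\pi)$ and $B^{p}=\ker(\pi)$ are closed and complemented, and $L^{p}(\mu)=G^{p}\oplus B^{p}$ as a topological direct sum. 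Combining this with the isometric isomorphism $G^{p}\cong\ell^{p}(\co,L^{p}[0,1])$ of Theorem \ref{T4.2} yields $L^{p}(\mu)\cong\ell^{p}(\co,L^{p}[0,1])\oplus B^{p}$. The one genuinely subtle point, and the step I would be most careful about, is the linearity of $\pi$: the formula $f\mapsto f\chi_{\mc{F}_{f}}$ depends on $f$ nonlinearly through the set $\mc{F}_{f}$, so linearity cannot be read off the formula and must instead be deduced from the uniqueness of the algebraic direct-sum decomposition.
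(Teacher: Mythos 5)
Your proposal is correct, and its core is the same as the paper's: both use the canonical splitting $f = f\chi_{\mc{F}_{f}} + f\chi_{\mc{F}_{f}^{c}}$ and verify membership of the two pieces in $G^{p}$ and $B^{p}$ by exactly the same case analysis on $\mf{a}\in\mc{O}_{f}$ versus $\mf{a}\notin\mc{O}_{f}$. Where you genuinely diverge is the upgrade from algebraic to topological direct sum. The paper disposes of this in one line: since $B^{p}$ is closed (its earlier lemma) and $G^{p}$ is closed (being isometric to the Banach space $\ell^{p}(\co,L^{p}[0,1])$ by Theorem \ref{T4.2}), an algebraic decomposition into closed subspaces of a Banach space is automatically topological --- an implicit appeal to the open mapping theorem. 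You instead build the projection $\pi\colon f\mapsto f\chi_{\mc{F}_{f}}$ by hand, prove its linearity from uniqueness of the decomposition (correctly flagging that linearity cannot be read off the nonlinear-looking formula, and correctly leaning on Lemma \ref{L2} that $G^{p}$ is a subspace), and establish boundedness from the identity $\|f\|_{L^{p}}^{p}=\|\pi f\|_{L^{p}}^{p}+\|(I-\pi)f\|_{L^{p}}^{p}$. This buys several things the paper's argument does not make explicit: (i) you actually prove $G^{p}\cap B^{p}=\{0\}$, which the paper's proof needs for the symbol $\oplus$ but never states; (ii) you avoid the open mapping theorem and re-derive closedness of both subspaces as $\ker\pi$ and $\ker(I-\pi)$, so your argument does not depend on the prior closedness lemma for $B^{p}$ nor on Theorem \ref{T4.2} for the complementation itself; and (iii) the norm identity shows the projections are contractive, i.e., the decomposition is in fact an isometric $\ell^{p}$-sum $L^{p}(\mu)=G^{p}\oplus_{p}B^{p}$, a quantitatively stronger statement than mere complementation. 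The trade-off is length: given the paper's earlier results, its one-line reduction is shorter, whereas your route is more self-contained and more informative.
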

\begin{proof}
	Since $G^{p}$ and $B^{p}$ are closed subspaces and $L^{p}(\mu)$ is Banach, it is enough to prove the algebraic decomposition. Each $f\in L^{p}(\mu)$ can be decomposed as
	$$
	f=f \chi_{\mc{F}_{f}} + f (1-\chi_{\mc{F}_{f}}).
	$$
	Let us prove that $f \chi_{\mc{F}_{f}}\in G^{p}$ and $f (1-\chi_{\mc{F}_{f}})\in B^{p}$. On the one hand, since $\mc{F}_{f}= \bigsqcup_{\mf{a}\in \mc{O}_{f}}\mc{C}_{\mf{a}}$ and $\mc{O}_{f}$ is countable, we have
	\begin{align*}
		\int_{\R^{\N}}|f\chi_{\mc{F}_{f}}|^{p} \; d\mu  = \int_{\mc{F}_{f}}|f|^{p} \; d\mu = \sum_{\mf{a}\in\mc{O}_{f}}\int_{\mc{C}_{\mf{a}}}|f|^{p}\; d\mu = \sum_{\mf{a}\in\mc{O}_{f}}\int_{\mc{C}_{\mf{a}}}|f\chi_{\mc{F}_{f}}|^{p}\; d\mu.
	\end{align*}
This proves that $f \chi_{\mc{F}_{f}}\in G^{p}$. On the other hand, if $\mf{a}\in\mc{O}_{f}$, then $\mc{C}_{\mf{a}}\cap \mc{F}_{f}^{c}=\varnothing$ and therefore
$$
\int_{\mc{C}_{\mf{a}}}|f(1-\chi_{\mc{F}_{f}})|^{p} \; d\mu = \int_{\mc{C}_{\mf{a}}}|f\chi_{\mc{F}_{f}^{c}}|^{p} \; d\mu  = 0.
$$
If $\mf{a}\notin \mc{O}_{f}$, by the definition of $\mc{O}_{f}$, $\int_{\mc{C}_{\mf{a}}}|f|^{p} \; d\mu=0$ and, consequently, 
$$
\int_{\mc{C}_{\mf{a}}}|f(1-\chi_{\mc{F}_{f}})|^{p} \; d\mu = 0.
$$
Hence, $\int_{\mc{C}_{\mf{a}}}|f(1-\chi_{\mc{F}_{f}})|^{p} \; d\mu = 0$ for every $\mf{a}\in I$, proving that $f(1-\chi_{\mc{F}_{f}})\in B^{p}$. This concludes the proof.
\end{proof}

Therefore, we obtain the following.

\begin{corollary}
	For $1\leq p <\infty$, the space $L^{p}(\mu)$ contains an isometric and complemented copy of $\ell^{p}(\co,L^{p}[0,1])$.
\end{corollary}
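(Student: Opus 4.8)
The plan is to read off the corollary as an immediate consequence of the two structural results already established, namely Theorem~\ref{T4.2} and the final decomposition theorem of this section. Concretely, the decomposition theorem asserts that $L^{p}(\mu)=G^{p}\oplus B^{p}$ as \emph{complemented} closed subspaces; in particular $G^{p}$ is complemented in $L^{p}(\mu)$, with $B^{p}$ serving as a (topological) complement. The natural projection is $f\mapsto f\chi_{\mc{F}_{f}}$, which the decomposition theorem has already shown maps $L^{p}(\mu)$ onto $G^{p}$ with kernel $B^{p}$.

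Next I would invoke Theorem~\ref{T4.2}, which provides an explicit isometric isomorphism $\Psi\colon G^{p}\to\ell^{p}(\co,L^{p}[0,1])$ (built from the translations $T_{\mf{a}}$ on the cube decomposition $\R^{\N}=\bigsqcup_{\mf{a}\in I}\mc{C}_{\mf{a}}$ together with the separability and pure nonatomicity of each $L^{p}(\mc{C}_{\mf{a}})\cong L^{p}[0,1]$). Combining these two facts, $G^{p}$ is simultaneously a complemented subspace of $L^{p}(\mu)$ and isometrically isomorphic to $\ell^{p}(\co,L^{p}[0,1])$. Hence $G^{p}$ is exactly the desired isometric and complemented copy, and the corollary follows.

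There is essentially no obstacle here beyond correctly citing the two inputs: the only point worth stating explicitly is that being complemented is preserved under the isometric identification $\Psi$, so that $\ell^{p}(\co,L^{p}[0,1])$ (and not merely an abstract isomorphic image of it) embeds isometrically as a complemented subspace. For completeness one may remark that the same conclusion is obtained from the $B^{p}$ theorem, which exhibits an isometric complemented copy of $\ell^{p}(\co,L^{p}[0,1])$ inside the complemented summand $B^{p}$; a complemented subspace of a complemented subspace is complemented in the whole space, so this gives a second, independent route to the corollary. Either way, no new computation is required.
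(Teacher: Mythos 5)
Your proposal is correct and matches the paper's own route: the corollary is stated there as an immediate consequence of the decomposition theorem $L^{p}(\mu)=G^{p}\oplus B^{p}$ together with Theorem~\ref{T4.2}'s isometry $G^{p}\cong \ell^{p}(\co,L^{p}[0,1])$, exactly as you argue. Your closing remark about the alternative route through $B^{p}$ is a valid (if redundant) observation, but the main argument is precisely the paper's.
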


\section{Characterization of isometries and statement of the conjecture}\label{Se4}

In this final section, we determine when $L^{p}(\nu)$, for a relatively nonatomic measure $\nu$ (see Appendix \ref{AP}), is isometrically isomorphic to $\ell^{p}(\kappa, L^{p}[0,1])$ for some cardinal $\kappa$; symbolically,
\begin{equation*}
	L^{p}(\nu)\cong\ell^{p}(\kappa, L^{p}[0,1]).
\end{equation*}
In addition, in the case where such an isomorphism exists, we describe all isometric isomorphisms between these spaces. We conclude by applying these results to conjecture that the isometric isomorphism $L^{p}(\mu)\cong \ell^{p}(\co,L^{p}[0,1])$ is independent of ZFC.
\par We begin by introducing the notion of a $\kappa$-separable partition.

\begin{definition}[$\kappa$-\textbf{separable partition}]
	\label{De4.2}
	Let $(X,\Sigma,\nu)$ be a measure space and let $\kappa$ be a cardinal. A family $\{X_i\colon i\in J\}\subset \Sigma$ of $\s$-finite separable subsets with nonzero measure is a $\kappa$-\textit{separable partition} if $|J|=\kappa$,
	\begin{equation}
		\label{P11}
		\nu(X_i \cap X_j)=0 \; \;  \text{for distinct} \; \; i,j  \in J,
	\end{equation}
	and for every $E\in\Sigma$ with finite measure,
	\begin{equation}
		\label{P}
		\nu(E)=\sum_{i\in J} \nu(E\cap X_i).
	\end{equation}
\end{definition}

Under CH, it is straightforward to verify that the family $\{C_\xi\colon \xi<\o_{1}\}$ introduced in Proposition \ref{Pr2.1} is a $\mf{c}$-separable partition of $(\R^{\N},\mu)$.

The first result of this section shows that if a measure space admits a $\kappa$-separable partition, then $L^{p}(\nu)\cong \ell^{p}(\kappa,L^{p}[0,1])$. We begin with the following lemma.

\begin{lemma}
	\label{L}
	Let $(X,\Sigma,\nu)$ be purely nonatomic, $\s$-finite, and separable. Then, for every $1\leq p <\infty$, $L^{p}(\nu)$ is isometrically isomorphic to $L^{p}[0,1]$.
\end{lemma}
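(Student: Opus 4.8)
The plan is to reduce the statement to the classical isometric classification of $L^{p}$-spaces over finite purely nonatomic measure spaces, using the three hypotheses (purely nonatomic, $\s$-finite, separable) to transport the problem onto a finite measure space. The target result I will invoke is the one already cited in the excerpt (see \cite[Cor.~p.~128]{La}): for $1\leq p<\infty$, if $(Y,\mathcal T,\rho)$ is a \emph{finite}, purely nonatomic measure space with $L^{p}(\rho)$ separable, then $L^{p}(\rho)\cong L^{p}[0,1]$. The task is thus to convert the given $\s$-finite space into a finite one while preserving both the $L^{p}$-space up to isometry and the properties ``purely nonatomic'' and ``separable''.

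\medskip

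First I would use $\s$-finiteness to write $X=\bigsqcup_{n\in\N}X_{n}$ with each $\nu(X_{n})<\infty$ (disjointifying an exhausting sequence of finite-measure sets). Then I would replace $\nu$ by an equivalent finite measure $\tilde\nu$ defined by $\tilde\nu(E):=\sum_{n\in\N} 2^{-n}\,\nu(E\cap X_{n})/(1+\nu(X_{n}))$, or more simply by a density $w:=\sum_{n} c_{n}\chi_{X_{n}}$ with $c_{n}>0$ chosen so that $\int_{X} w\,d\nu<\infty$; set $d\tilde\nu=w\,d\nu$. Since $w>0$ $\nu$-a.e., the measures $\nu$ and $\tilde\nu$ have the same null sets, so $\tilde\nu$ is again purely nonatomic (an atom for $\tilde\nu$ would be an atom for $\nu$, because null sets coincide and the splitting condition in the definition of atom is measure-independent up to null sets). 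The key analytic step is the multiplication isometry
\[
U:L^{p}(\tilde\nu)\longrightarrow L^{p}(\nu),\qquad U(g):=w^{1/p}\,g,
\]
which satisfies $\int_{X}|U g|^{p}\,d\nu=\int_{X}|g|^{p}\,w\,d\nu=\int_{X}|g|^{p}\,d\tilde\nu$, hence is a surjective linear isometry with inverse $f\mapsto w^{-1/p}f$. Therefore $L^{p}(\nu)\cong L^{p}(\tilde\nu)$.

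\medskip

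It remains to check that $(X,\Sigma,\tilde\nu)$ meets the hypotheses of the cited classification theorem. It is finite by construction and purely nonatomic by the null-set argument above. For separability of $L^{p}(\tilde\nu)$: separability of $L^{p}(\nu)$ is exactly the hypothesis ``separable'' in the lemma (interpreting separability of the space as separability of the associated $L^{p}$, as used throughout the excerpt, e.g. in Lemma~\ref{Le1}); combined with the isometry $U$ this transfers to $L^{p}(\tilde\nu)$. Applying \cite[Cor.~p.~128]{La} to $\tilde\nu$ gives $L^{p}(\tilde\nu)\cong L^{p}[0,1]$, and composing with $U$ yields $L^{p}(\nu)\cong L^{p}[0,1]$, as claimed.

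\medskip

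I expect the only genuine subtlety — the ``hard part'' — to be the bookkeeping around the word \emph{separable}: one must fix the convention that a measure space is called separable when its $L^{p}$ (equivalently, its measure algebra under the metric $\rho(A,B)=\nu(A\triangle B)$ on finite-measure sets) is separable, and then confirm this notion is invariant under passing from $\nu$ to the equivalent finite measure $\tilde\nu$. Granting the cited classification result and this convention, everything else is a routine change-of-density computation, so no further obstacle is anticipated.
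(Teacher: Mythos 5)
Your proof is correct, but it takes a genuinely different route from the paper's. The paper keeps the measure $\nu$ fixed and works with the decomposition $X=\bigsqcup_{n\in\N}X_n$ directly: it applies \cite[Cor.~p.~128]{La} to each finite piece to obtain isometries $R_n\colon L^p(X_n)\to L^p[0,1]$, assembles them into an isometric isomorphism of $\left(\bigoplus_{n\in\N} L^p(X_n)\right)_p$ onto $\ell^p(\N,L^p[0,1])$, and then collapses the $\ell^p$-sum back into a single $L^p[0,1]$ via an explicit packing isometry $T_2$ supported on the dyadic intervals $(2^{-n},2^{-(n-1)})$ with weights $2^{n/p}$, together with the tautological isometry $T_1$ identifying $L^p(\nu)$ with the direct sum. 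You instead change the measure: you multiply by a summable strictly positive density $w=\sum_n c_n\chi_{X_n}$ to produce an equivalent finite measure $\tilde\nu$, observe that $g\mapsto w^{1/p}g$ is a surjective isometry $L^p(\tilde\nu)\to L^p(\nu)$, check that pure nonatomicity and $L^p$-separability survive (both do, since $w>0$ forces the null sets to coincide and the isometry transfers separability), and then invoke the cited classification theorem exactly once. Your route is shorter and avoids both the direct-sum bookkeeping and the construction of $T_2$; the paper's route is more explicit and reuses the $\ell^p$-sum machinery that recurs throughout the article (e.g.\ in Theorems \ref{TeoPri} and \ref{Th4.3}), and it never has to verify invariance of atoms and separability under a change of density. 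Your closing caveat about the word \emph{separable} is also exactly the right one: the paper implicitly reads it as separability of the (restricted) $L^p$-spaces, which is what legitimizes its piecewise application of \cite[Cor.~p.~128]{La}, whereas your single global transfer through the multiplication isometry sidesteps the hereditary question entirely.
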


\begin{proof}
	By hypothesis there is a partition $\{X_{n}: n\in \N\}\subset \Sigma$ of $X$ into purely nonatomic, separable sets with $\nu(X_{n})<\infty$ for each $n\in\N$. By \cite[Cor.~p.~128]{La}, for each $n\in\N$ there is an isometric isomorphism $R_{n}: L^{p}(X_{n})\to L^{p}[0,1]$. Therefore,
	$$
	R: \left(\bigoplus_{n\in\N}L^{p}(X_{n})\right)_{p}\longrightarrow \ell^{p}(\N,L^{p}[0,1]), \quad R[(f_{n})_{n\in\N}]:=(R_{n}(f_{n}))_{n\in\N},
	$$
	is an isometric isomorphism. Finally, the standard isometries
	$$
	T_{1}:\left(\bigoplus_{n\in\N}L^{p}(X_{n})\right)_{p}\longrightarrow L^{p}(\nu), \quad T_{1}[(f_{n})_{n\in\N}]:=\sum_{n\in\N}f_{n}\chi_{X_{n}}, 
	$$
	and $	T_{2}:\ell^{p}(\N,L^{p}[0,1])\to L^{p}[0,1]$ given by
	$$
	T_{2}[(f_{n})_{n\in\N}](x):=\left\{
	\begin{array}{ll}
		2^{\frac{n}{p}}f_{n}(2^{n}x-1), & \text{if} \;\; x\in (2^{-n},2^{-(n-1)}), \\
		0, & \text{if} \;\; x=0,1,
	\end{array}
	\right.
	$$
	complete the proof since $T_{2}\circ R\circ T_{1}^{-1}: L^{p}(\nu)\to L^{p}[0,1]$ is the required isometric isomophism.
\end{proof}

\begin{theorem}
	\label{Th4.3}
	Let $1\leq p < \infty$. If a relatively nonatomic measure space $(X,\Sigma,\nu)$ admits a $\kappa$-separable partition, then $L^{p}(\nu)$ is isometrically isomorphic to $\ell^{p}(\kappa,L^{p}[0,1])$.
\end{theorem}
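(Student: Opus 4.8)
The plan is to construct the isometric isomorphism directly from the $\kappa$-separable partition $\{X_i \colon i\in J\}$, mirroring the two-stage strategy already used in Theorem \ref{TeoPri} and Theorem \ref{T4.2}. First I would define the natural restriction operator
\begin{equation*}
	T: L^{p}(\nu) \longrightarrow \left(\bigoplus_{i\in J} L^{p}(X_i)\right)_{p}, \qquad T(f):=(f\!\restriction\! X_i)_{i\in J},
\end{equation*}
and show it is a well-defined isometry. The key computation is to verify that $\|f\|_{L^{p}(\nu)}^{p}=\sum_{i\in J}\|f\!\restriction\! X_i\|_{L^{p}(X_i)}^{p}$ for every $f\in L^{p}(\nu)$. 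The inequality $\sum_{i\in J}\int_{X_i}|f|^{p}\,d\nu \le \int_X |f|^{p}\,d\nu$ follows from \eqref{P11} (the $X_i$ overlap on a null set, so the sets $X_i\cap\supp(f)$ are essentially disjoint). For the reverse inequality I would invoke \eqref{P}: since $f\in L^{p}(\nu)$, its support $\supp(f)=\bigcup_n [|f|>1/n]$ is $\sigma$-finite, so I can write $E:=\supp(f)$ as an increasing union of finite-measure sets $E_m$, apply \eqref{P} to each $|f|^{p}\chi_{E_m}$ (more precisely to the finite-measure sets on which $|f|^p$ is integrated), and pass to the limit by monotone convergence to obtain $\int_X |f|^{p}\,d\nu=\sum_{i\in J}\int_{X_i}|f|^{p}\,d\nu$. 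This simultaneously establishes that $T$ is a well-defined isometry.

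Next I would show $T$ is surjective, exactly as in the proof of Theorem \ref{TeoPri}. Given $\bar f=(f_i)_{i\in J}$ in the $\ell^{p}$-sum, its support $\{i\in J : \|f_i\|_{L^p(X_i)}\neq 0\}$ is countable by definition of the $\ell^{p}$-direct sum, so I can set $f:=\sum_{i\in\supp(\bar f)} f_i\,\chi_{X_i}$. Because the $X_i$ are pairwise $\nu$-almost disjoint, this sum is well defined $\nu$-a.e.\ and the partition identity gives $\|f\|_{L^p}^{p}=\sum_i\|f_i\|_{L^p(X_i)}^{p}=\|\bar f\|_{\ell^p}^{p}<\infty$, whence $f\in L^{p}(\nu)$ with $T(f)=\bar f$. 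Thus $T$ is an isometric isomorphism onto $\left(\bigoplus_{i\in J} L^{p}(X_i)\right)_{p}$.

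Finally I would identify each factor. Each $X_i$ is $\sigma$-finite and separable by the definition of a $\kappa$-separable partition, and it is purely nonatomic: indeed $(X,\Sigma,\nu)$ is relatively nonatomic and $X_i$ is $\sigma$-finite, so by definition the restricted space $(X_i,\Sigma|_{X_i},\nu)$ is purely nonatomic. Hence Lemma \ref{L} applies and yields $L^{p}(X_i)\cong L^{p}[0,1]$ for every $i\in J$. Since $|J|=\kappa$, composing with these isometries on each coordinate gives
\begin{equation*}
	L^{p}(\nu)\cong \left(\bigoplus_{i\in J} L^{p}(X_i)\right)_{p}\cong \ell^{p}(\kappa, L^{p}[0,1]),
\end{equation*}
which is the desired conclusion.

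The main obstacle is the reverse-direction norm identity, i.e.\ upgrading the a priori inequality $\sum_i\int_{X_i}|f|^p\,d\nu \le \int_X|f|^p\,d\nu$ to an equality for arbitrary $f\in L^{p}(\nu)$. The partition hypothesis \eqref{P} is stated only for sets $E$ of \emph{finite} measure, whereas $\supp(f)$ is merely $\sigma$-finite; the care needed is to reduce to the finite-measure case via an exhaustion $E_m\uparrow\supp(f)$ and then interchange the sum over $i\in J$ with the limit over $m$ using monotone convergence (each $X_i\cap E_m$ has finite measure, so \eqref{P} is legitimately applicable). Everything else is a direct transcription of the arguments already carried out in Section \ref{Se2}.
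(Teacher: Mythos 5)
Your proposal is correct, and its skeleton coincides with the paper's proof: the same restriction operator $T f := (f\restriction X_i)_{i\in J}$, the same countable-support construction of a preimage for surjectivity, and the same identification $L^{p}(X_i)\cong L^{p}[0,1]$ via Lemma \ref{L}, using relative nonatomicity together with $\sigma$-finiteness and separability of each $X_i$. The one place where you genuinely diverge is the proof of the isometry identity $\int_X |f|^{p}\,d\nu=\sum_{i\in J}\int_{X_i}|f|^{p}\,d\nu$. The paper verifies it first for $\chi_{E}$ with $\nu(E)<\infty$ directly from \eqref{P}, extends it to the span $\mc{X}$ of such indicators, and then passes to arbitrary $f$ by density of $\mc{X}$ in $L^{p}(\nu)$; this requires a countable-support bookkeeping step (a countable $\hat{J}\subset J$ carrying all the approximants $f_n$ and the limit $f$, so that $L^{p}$-convergence can be exploited on $\mc{J}=\bigcup_{i\in \hat{J}}X_i$). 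You instead exhaust the $\sigma$-finite set $\supp(f)$ by finite-measure sets $E_m$, upgrade \eqref{P} from sets to nonnegative integrable functions supported on $E_m$ (via simple-function approximation, whose level sets have finite measure, so \eqref{P} applies to them), and then let $m\to\infty$, interchanging the possibly uncountable sum over $J$ with the increasing limit; this interchange is legitimate because the terms are nonnegative and nondecreasing in $m$ (monotone convergence for the counting measure on $J$). Both routes are sound: yours is more direct and avoids the density argument and its countable-support bookkeeping entirely, while the paper's density argument isolates exactly the kind of countable-localization reasoning that it reuses later in the proof of Theorem \ref{TP3}. The remaining steps of your proposal (well-definedness from \eqref{P11}, surjectivity, and the coordinate-wise isometries onto $\ell^{p}(\kappa,L^{p}[0,1])$) are identical to the paper's.
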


\begin{proof}
	Let $\{X_i\colon i\in J\}$ be a $\kappa$-separable partition and consider the operator
	\begin{equation}
		\label{I2}
		T: L^{p}(\nu) \longrightarrow \left(\bigoplus_{i\in J} L^{p}(X_i)\right)_{p}, \quad Tf := (f \restriction X_i)_{i\in J}.
	\end{equation}
	The operator \eqref{I2} is well defined.
	Indeed, for each finite $F\subset J$ and $i\in F$, by \eqref{P11} we have 
	$$
	\nu \left( X_i \cap \bigcup_{j\in F\setminus\{i\}} X_j \ \right) = \nu \left( \bigcup_{j\in F\setminus \{i\}} (X_i \cap X_j) \ \right) \leq \sum_{j\in F\setminus\{i\}} \nu (X_{i}\cap X_{j})= 0.
	$$
	Hence, for any finite $F\subset J$ and $f\in L^{p}(\nu)$,
	$$
	\sum_{i\in F}\int_{X_i}|f|^{p} \; d\nu \leq \int_{X} |f|^{p} \; d\nu.
	$$
	It follows that, for every $f\in L^{p}(\nu)$,
	\begin{align*}
		\|Tf\|^{p}_{\ell^{p}} & =\sum_{i\in J}\|f\restriction X_i\|^{p}_{L^{p}(X_i)} = \sum_{i\in J}\int_{X_i}|f|^{p} \; d\nu \\
		& = \sup \left\{ \sum_{i\in F}\int_{X_i}|f|^{p} \; d\nu :  F\subset J \text{ finite} \right\} \leq \int_{X} |f|^{p} \; d\nu <\infty.
	\end{align*}
	For every $E\in\Sigma$ with finite measure, consider the characteristic function $\chi_{E}$. Then, by \eqref{P}, we obtain
	\begin{align*}
		\|\chi_{E}\|^{p}_{L^{p}} & =\int_{X}|\chi_{E}|^{p} \; d\nu = \nu(E) = \sum_{i\in J}\nu(E\cap X_i)=\sum_{i\in J}\int_{X_i}|\chi_{E}|^{p} \; d\nu \\
		&  = \sum_{i\in J} \|\chi_{E} \restriction X_i\|^{p}_{L^{p}(X_i)} = \|T\chi_{E}\|^{p}_{\ell^{p}}.
	\end{align*}
	This extends to functions in $\mc{X}:=\spann\{\chi_{E} : E\in \Sigma, \; \nu(E)<\infty\}$: for each $f\in\mc{X}$, 
	\begin{equation}
		\label{P1}
		\|f\|_{L^{p}}=\|Tf\|^{p}_{\ell^{p}}.
	\end{equation}  
	Now take a sequence $(f_{n})_{n\in \N}\subset \mc{X}$ satisfying \eqref{P1} with $f_{n}\to f$ in $L^{p}$. Then, 
	\begin{align*}
		\|f\|^{p}_{L^{p}}=\lim_{n\to\infty}\int_{X}|f_{n}|^{p} \; d\nu = \lim_{n\to\infty} \sum_{i\in J}\int_{X_i}|f_{n}|^{p} \; d\nu.
	\end{align*}
	Since each sum is finite, there exist countable sets $J_{n}\subset J$ such that $\|f_{n}\|_{L^{p}(X_i)}=0$ for all $i\in J\setminus J_n$. 
	Let $\hat{J}:=\bigcup_{n\in\N}J_{n}$ (countable) and set 
	$$\mc{J}:=\bigcup_{i \in \hat{J}}X_i \in \Sigma.$$
	By convergence, $\|f\|_{L^{p}(X_i)}=0$ for each $i\in J\setminus \hat{J}$, hence
	$$
	\int_{\mc{J}} |f|^p \; d\nu = \sum_{i\in \hat{J}} \int_{X_i} |f|^p \; d\nu = \sum_{i\in J} \int_{X_i} |f|^p \; d\nu.
	$$
	Therefore,
	\begin{align*}
		\|f\|^{p}_{L^{p}} = \lim_{n\to\infty} \sum_{i \in J_{n}}\int_{X_i}|f_{n}|^{p} \; d\nu = \lim_{n\to\infty} \int_{\mc{J}} |f_{n}|^{p} \; d\nu = \int_{\mc{J}}|f|^{p} \; d\nu = \sum_{i \in J} \int_{X_i}|f|^{p} \; d\nu = \|Tf\|^{p}_{\ell^{p}}.
	\end{align*}
	By density of $\mc{X}$ in $L^{p}(\nu)$ we conclude that \eqref{P1} holds for every $f\in L^{p}(\nu)$, so $T$ is an isometry. To see that $T$ is onto, let 
	$$(f_i)_{i \in J}\in \left(\bigoplus_{i \in J} L^{p}(X_i)\right)_{p}.$$
	Then there exists a countable $C\subset J$ with $\|f_i\|_{L^{p}(X_i)}=0$ for all $i \in J\setminus C$ 
	(and in particular $f_i \restriction X_i = 0$ $\nu$-a.e. for $i\in J\setminus C$).
	For each $i\in C$,
	\begin{equation*}
		\nu \left( X_i \cap \bigcup_{j\in C\setminus \{ i \}} X_j \right) = \nu \left( \bigcup_{j\in C\setminus \{ i \}} (X_i \cap X_j) \right) \leq \sum_{j\in C\setminus \{ i \}} \nu( X_i \cap X_j ) = 0.
	\end{equation*}
	Define $f: X \to \R$ by
	$$
	f:= \sum_{i \in C}f_i \;  \chi_{X_i}.
	$$
	Then $f\restriction X_i = f_i \restriction X_i$ $\nu$-a.e. for any $i\in J$, and
	$f\in L^{p}(\nu)$ since
	$$
	\|f\|^{p}_{L^{p}}=\int_{X}|f|^{p} \; d\nu = \sum_{i \in C}\int_{X_i}|f_i|^{p}\; d\nu = \|(f_i)_{i \in J}\|^{p}_{\ell^{p}}<\infty.
	$$
	Hence $Tf=(f_i)_{i \in J}$, so $T$ is an isometric isomorphism. Finally, observe that for each \(i\in J\), the restricted measure space \((X_i,\nu)\) is purely nonatomic because it is \(\sigma\)-finite; this follows directly from the assumption that \((X,\Sigma,\nu)\) is relatively nonatomic. Therefore, $X_{i}$ is separable, $\s$-finite and purely nonatomic. By Lemma \ref{L}, we have $L^{p}(X_{i})\cong L^{p}[0,1]$, and therefore
	$$
	\left(\bigoplus_{i\in J} L^{p}(X_i)\right)_{p}\cong \ell^{p}(\kappa, L^{p}[0,1]),
	$$
	as $|J|=\kappa$. This completes the proof.
\end{proof}

The nonatomic assumption on $(X,\Sigma,\nu)$ is indeed necessary. For example, on the atomic measure space $(\N, \mc{P}(\N), \nu)$ with $\nu$ the counting measure, the family $\{\{n\} : n\in \N\}$ is an $\aleph_{0}$-separable partition, whereas $L^{p}(\N,\nu) = \ell^{p}$ is not isometrically isomorphic to $\ell^{p}(\N, L^{p}[0,1])\cong L^{p}[0,1]$ for $p\neq 2$.

\vspace{2pt}

The second part of this section provides a converse to Theorem \ref{Th4.3}. We begin with the notion of a $\kappa$-separable envelope.

\begin{definition}[$\kappa$-\textbf{separable envelope}]
	\label{De4.1}
	Let $(X,\Sigma,\nu)$ be a measure space and let $\kappa$ be a cardinal. A family $\{X_i\colon i\in J\}\subset \Sigma$ of $\s$-finite separable subsets with nonzero measure is a $\kappa$-\textit{virtual envelope} if $|J|=\kappa$,
	\begin{equation}
		\label{P12}
		\nu(X_i\cap X_j)=0 \; \;  \text{for distinct} \; \; i,j \in J,
	\end{equation}
	and for every $E\in\Sigma$ of finite measure with $E\subset \bigcup_{i \in J}X_i$,
	\begin{equation}
		\label{E}
		\nu(E)=\sum_{i\in J} \nu(E\cap X_i).
	\end{equation}
\end{definition}

The key difference from a \(\kappa\)-separable partition (Definition \ref{De4.2}) is that here the additivity identity \(\nu(E)=\sum_{i\in J}\nu(E\cap X_i)\) is required only for sets \(E\) contained in \(\bigcup_{i\in J}X_i\); the family need not exhaust, even up to null sets, all finite-measure subsets of \(X\). Consequently, every \(\kappa\)-virtual partition is a \(\kappa\)-virtual envelope.

We say that a measure space $(X,\Sigma,\nu)$ has the \textit{separability property} if every finite-measure set $B\in\Sigma$ is separable. Note that if $(X,\Sigma,\nu)$ has the separability property, then every $\s$-finite measurable subset $B\in \Sigma$ is also separable. Moreover, $L^{p}(B)\cong L^{p}[0,1]$, as shown in Lemma \ref{L}. Of course, $(\R^{\N},\mu)$ has the separability property by Lemma \ref{Le1}.

The following result is essentially the converse of Theorem \ref{Th4.3} for measure spaces with the separability property. In this case we also fully describe all isometric isomorphisms between \(L^{p}(\nu)\) and \(\ell^{p}(\kappa, L^{p}[0,1])\).

\begin{theorem}
	\label{TP3}
	Let $(X,\Sigma,\nu)$ be a relatively nonatomic measure space with the separability property, and $1\leq p < \infty$, $p\neq 2$. If $L^{p}(\nu)$ is isometrically isomorphic to $\ell^{p}(\kappa,L^{p}[0,1])$, then $(X,\Sigma,\nu)$ admits a $\kappa$-separable envelope $\{X_{i} : i\in J\}$ with $|J|=\kappa$. Moreover, every isometric isomorphism $T:L^{p}(\nu)\to\ell^{p}(\kappa,L^{p}[0,1])$ is of the form
	\begin{equation}
		\label{Fac}
		T(f)=(R^{-1}_{i}(\theta(f)\restriction X_{i}))_{i\in J},
	\end{equation}
	for some isometry $\theta: L^{p}(\nu)\to L^{p}(\nu)$ and a family $(R_{i})_{i\in J}$ of isometric isomorphisms $R_{i}:L^{p}[0,1]\to L^{p}(X_{i})$.
\end{theorem}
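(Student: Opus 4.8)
The plan is to pull the canonical band decomposition of $\ell^{p}(\kappa,L^{p}[0,1])$ back through $T$, exploiting the geometric rigidity of $L^{p}$ for $p\neq 2$. Write $\ell^{p}(\kappa,L^{p}[0,1])=\left(\bigoplus_{i\in J}W_{i}\right)_{p}$ with each $W_{i}=L^{p}[0,1]$, and let $P_{i}$ be the coordinate projection onto $W_{i}$. Each $P_{i}$ is an \emph{$L^{p}$-summand projection}, meaning $\|z\|^{p}=\|P_{i}z\|^{p}+\|z-P_{i}z\|^{p}$; moreover $P_{i}P_{j}=0$ for $i\neq j$ and $\sum_{i\in J}P_{i}=I$ in the unconditional strong sense. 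Since $T$ is an isometric isomorphism, the conjugates $Q_{i}:=T^{-1}P_{i}T$ inherit all of these properties, so each $Q_{i}$ is an $L^{p}$-summand projection on $L^{p}(\nu)$ with $Q_{i}Q_{j}=0$ and $\sum_{i}Q_{i}=I$.

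Here I invoke the structural input available precisely because $p\neq 2$: in any $L^{p}(\nu)$ the $L^{p}$-summands are exactly the bands $L^{p}(A)$, $A\in\Sigma$, and the summand together with its projection is unique, the projection being multiplication by $\chi_{A}$ (Lamperti's theorem and the $L^{p}$-structure theory; for $p=2$ uniqueness fails). Applying this to $Q_{i}$ produces a measurable set $X_{i}$ with $V_{i}:=T^{-1}(W_{i})=L^{p}(X_{i})$ and $Q_{i}=$ multiplication by $\chi_{X_{i}}$. Then $Q_{i}Q_{j}=0$ becomes multiplication by $\chi_{X_{i}\cap X_{j}}=0$, i.e.\ $\nu(X_{i}\cap X_{j})=0$, while $\sum_{i}Q_{i}=I$ yields, for every $f\in L^{p}(\nu)$, both $f=\sum_{i}f\chi_{X_{i}}$ in $L^{p}$ and $\|f\|_{L^{p}}^{p}=\sum_{i}\int_{X_{i}}|f|^{p}\,d\nu$; in particular $L^{p}(\nu)=\left(\bigoplus_{i}L^{p}(X_{i})\right)_{p}$. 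Taking $f=\chi_{E}$ for finite-measure $E\subset\bigcup_{i}X_{i}$ gives the additivity \eqref{E}, so $\{X_{i}:i\in J\}$, with $|J|=\kappa$, is the candidate envelope.

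It remains to check that the $X_{i}$ are admissible and then to read off \eqref{Fac}. For admissibility, the isometric isomorphism $T^{-1}\!\restriction W_{i}\colon L^{p}[0,1]\to L^{p}(X_{i})$ sends a fixed $g>0$ a.e.\ to a function $u_{i}$ of full support in $X_{i}$ (disjointness preservation for $p\neq 2$: any $\chi_{A}$ disjoint from $u_{i}$ would pull back to a function disjoint from $g$, forcing $\nu(A)=0$). Hence $X_{i}=\bigcup_{n}\{|u_{i}|>1/n\}$ is $\sigma$-finite by Chebyshev, separable by the separability property, and of positive measure since $L^{p}(X_{i})\cong L^{p}[0,1]\neq\{0\}$; relative nonatomicity makes the $\sigma$-finite $X_{i}$ purely nonatomic, so Lemma~\ref{L} confirms $L^{p}(X_{i})\cong L^{p}[0,1]$. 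For the factorization, fix the envelope $\{X_{i}\}$ together with isometric isomorphisms $R_{i}\colon L^{p}[0,1]\to L^{p}(X_{i})$ (Lemma~\ref{L}) and define $\Lambda\colon L^{p}(\nu)\to\ell^{p}(\kappa,L^{p}[0,1])$ by $\Lambda(g):=\bigl(R_{i}^{-1}(g\restriction X_{i})\bigr)_{i\in J}$. The decomposition above shows $\|\Lambda g\|^{p}=\sum_{i}\int_{X_{i}}|g|^{p}=\|g\|^{p}$ and that $\Lambda$ is onto (invert each coordinate and sum, using $\nu(X_{i}\cap X_{j})=0$), so $\Lambda$ is an isometric isomorphism. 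Given any isometric isomorphism $T$, set $\theta:=\Lambda^{-1}\circ T$, an isometric automorphism of $L^{p}(\nu)$; then $T=\Lambda\circ\theta$ reads exactly as $T(f)=\bigl(R_{i}^{-1}(\theta(f)\restriction X_{i})\bigr)_{i\in J}$, which is \eqref{Fac}.

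I expect the single genuine obstacle to be the \emph{generality of $\nu$}, which is neither $\sigma$-finite nor semifinite, whereas Lamperti's theorem and the $L^{p}$-summand$\,=\,$band characterization are classically stated for $\sigma$-finite spaces. The resolution is that every element of $L^{p}(\nu)$ has $\sigma$-finite support (as in the proof of Proposition~\ref{Le5.1}): all the arguments above localize to $\sigma$-finite bands, where the classical rigidity theorems apply without change, and the possibly non-measurable union $\bigcup_{i}X_{i}$ is never treated as one set, since only its countable, $E$-dependent subfamilies intervene. A secondary point to verify carefully is the uniqueness of the $L^{p}$-summand projection for $p\neq 2$, which is exactly what forces $Q_{i}$ to be a genuine multiplication operator and hence what converts the abstract band decomposition into the concrete sets $X_{i}$.
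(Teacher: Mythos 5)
Your proof is correct, and it takes a genuinely different route from the paper's. The paper argues ``by hand'': it pulls back the unit vectors, $g_{i}:=T^{-1}\bigl((\mf{e}_{ij})_{j\in J}\bigr)$, sets $X_{i}:=\supp(g_{i})$, derives $\nu(X_{i}\cap X_{j})=0$ from the equality case of Clarkson's inequality (Lamperti), and then assembles the factorization from explicit maps $\hat{T}$, $R$ and $\theta:=R\circ\hat{T}$, where $R$ (hence $\theta$) is only an \emph{into}-isometry; accordingly it can verify the additivity \eqref{E} only for $E\subset\bigcup_{i}X_{i}$, i.e.\ it obtains an envelope. You instead conjugate the coordinate $L^{p}$-projections and identify $Q_{i}:=T^{-1}P_{i}T$ as multiplication by $\chi_{X_{i}}$. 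The underlying rigidity is the same (your band identification also rests on the Clarkson/Lamperti equality case), but packaging it through the Boolean algebra of $L^{p}$-projections buys strictly more: from $\sum_{i}Q_{i}=I$ (strongly) you get $f=\sum_{i}f\chi_{X_{i}}$ for \emph{every} $f\in L^{p}(\nu)$, so taking $f=\chi_{E}$ for an arbitrary finite-measure $E$ --- not only $E\subset\bigcup_{i}X_{i}$ --- shows that $\{X_{i}\}$ is a $\kappa$-separable \emph{partition} in the sense of Definition \ref{De4.2}, not merely an envelope; moreover the restriction map $\Lambda$ is then surjective, so $\theta=\Lambda^{-1}\circ T$ is a surjective isometry (indeed $\theta=\mathrm{id}$ works if one takes $R_{i}:=(T\restriction L^{p}(X_{i}))^{-1}$, since then $R_{i}^{-1}(f\restriction X_{i})=P_{i}Tf$). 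Combined with Theorem \ref{Th4.3}, your argument closes the loop into an equivalence: under the hypotheses of Theorem \ref{TP3}, $L^{p}(\nu)\cong\ell^{p}(\kappa,L^{p}[0,1])$ if and only if a $\kappa$-separable partition exists. Note that this strengthening is not optional in your route: your $\Lambda$ is an isometry only because of the partition identity, so a mere envelope would not suffice for your factorization --- but your argument does deliver the partition.

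One claim, however, must be restated before the write-up is sound. The assertion that in \emph{any} $L^{p}(\nu)$ the $L^{p}$-summands are exactly the bands $L^{p}(A)$, $A\in\Sigma$, is a classical theorem only for $\sigma$-finite (more generally localizable) measures; for measures like $\mu$ it is unjustified, and it is essentially the delicate point of the entire paper: for instance, $G^{p}$ of Section \ref{S2} is the range of the $L^{p}$-projection $f\mapsto f\chi_{\mc{F}_{f}}$, and whether $G^{p}=L^{p}(A)$ for some measurable $A$ is precisely an envelope-type question of the kind Conjecture \ref{C4} suggests can fail. Your proof survives because you apply the claim only to the $Q_{i}$, whose ranges $T^{-1}(W_{i})\cong L^{p}[0,1]$ are \emph{separable}: a separable range is carried by a $\sigma$-finite set $X_{i}$ (the union of the supports of a dense sequence); since $Q_{i}f\perp(I-Q_{i})f$ forces $\supp(Q_{i}f)\subset\supp(f)$, the projection $Q_{i}$ maps each $\sigma$-finite band $L^{p}(S)$ with $S\supset X_{i}$ into itself; and the classical $\sigma$-finite rigidity theorem applied on such $S$ then pins $Q_{i}$ down as multiplication by $\chi_{X_{i}}$. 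So the separability of the coordinate ranges is load-bearing, not cosmetic. Formulate the key lemma as ``every $L^{p}$-projection on $L^{p}(\nu)$ with separable range, $p\neq 2$, is multiplication by the characteristic function of a $\sigma$-finite set'' and prove it by this localization --- which is exactly the sketch in your final paragraph --- and the argument is complete.
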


\begin{proof}
	Suppose $L^{p}(\nu)\cong \ell^{p}(\kappa,L^{p}[0,1])$. Then there exists an isometric isomorphism
	$$
	T: L^{p}(\nu) \longrightarrow \ell^{p}(\kappa,L^{p}[0,1]), \quad Tf:=(f_{j})_{j\in J},
	$$
	for some set $J$ with $|J|=\kappa$. For each $i\in J$ define $T_{i}:=\pi_{i}\circ T: L^{p}(\nu)\to L^{p}[0,1]$, where 
	$$
	\pi_{i}: \ell^{p}(\kappa,L^{p}[0,1])\longrightarrow L^{p}[0,1], \quad \pi_{i}[(f_{j})_{j\in J}]:=f_{i}.
	$$
	For $i \in J$, let $g_{i}:=T^{-1}(\mf{e}_{ij})_{j\in J}\in L^{p}(\nu)$, where $(\mf{e}_{ij})_{j\in J}\in \ell^{p}(\kappa,L^{p}[0,1])$ is given by
	$$
	\mf{e}_{ij}:=\left\{
	\begin{array}{ll}
		1 & \text{if } i=j, \\
		0 & \text{if } i\neq j.
	\end{array}
	\right.
	$$
	Set $X_{i}:=\supp(g_{i})\in \Sigma$ for $i\in J$. We show that $\nu(X_{i}\cap X_{j})=0$ for $i\neq j$. Since $T$ is an isometry,
	\begin{equation}
		\label{e1}
		\|g_{i}\|_{L^{p}}^{p}=\sum_{j\in J}\|T_{j}g_{i}\|^{p}_{L^{p}[0,1]}=\sum_{j\in J}\|\mf{e}_{ij}\|^{p}_{L^{p}[0,1]}=1,
	\end{equation}
	for any $i \in J$, and
	\begin{align*}
		\|g_{i}\pm g_{j}\|^{p}_{L^{p}}=\sum_{k\in J}\|T_{k}g_{i}\pm T_{k}g_{j}\|^{p}_{L^{p}[0,1]}=\sum_{k\in J}\|\mf{e}_{ik}\pm \mf{e}_{jk}\|^{p}_{L^{p}[0,1]}=2
	\end{align*}
	whenever $i\neq j$.
	Identity \eqref{e1} implies $\nu(X_{i})>0$ for each $i$ and that $X_{i}$ is $\s$-finite. Moreover, for $i\neq j$, $g_{i}, g_{j}\in L^{p}(X_{i}\cup X_{j})$ attain equality in Clarkson’s inequality,
	$$
	\|g_{i}+g_{j}\|^{p}_{L^{p}}+\|g_{i}-g_{j}\|^{p}_{L^{p}}=2\left(\|g_{i}\|^{p}_{L^{p}}+\|g_{j}\|^{p}_{L^{p}}\right).
	$$
	Since $X_{i}\cup X_{j}$ is $\s$-finite and $p\neq 2$, \cite[Cor. 2.1]{Lam} yields $g_{i}\cdot g_{j}=0$ $\nu$-a.e. on $X_{i}\cap X_{j}$, hence $\nu(X_{i}\cap X_{j})=0$. As $(X,\Sigma,\nu)$ is relatively nonatomic with the separability property, each $X_{i}$ is purely nonatomic, separable, and $\s$-finite. By Lemma \ref{L}, for each $i\in J$, there exists an isometric isomorphism $R_{i}:L^{p}[0,1]\to L^{p}(X_{i})$. Composing with $T$ we obtain
	$$
	\hat{T}:L^{p}(\nu) \longrightarrow \left(\bigoplus_{i\in J} L^{p}(X_{i})\right)_{p}, \quad \hat{T}(f):=\left((R_{i}\circ T_{i})(f)\right)_{i\in J},
	$$
	an isometric isomorphism. Consider
	$$
	R: \left(\bigoplus_{i\in J} L^{p}(X_{i})\right)_{p} \longrightarrow L^{p}(\nu)
	, \quad	R[(f_{i})_{i\in J}]:=\sum_{i\in J_{\mf{f}}}f_{i}\; \chi_{X_{i}},
	$$
	where $\mf{f}:=(f_{i})_{i\in J}$ and $J_{\mf{f}}:=\{i\in J : \|f_{i}\|_{L^{p}(X_{i})}\neq 0\}$ is countable. This map is well defined and isometric since
	$$
	\|R[(f_{i})_{i\in J}]\|_{L^{p}}^{p}=\int_{X}\sum_{i\in J_{\mf{f}}}|f_{i}|^{p}\chi_{X_{i}} \; d\nu = \sum_{i\in J_{\mf{f}}} \int_{X_{i}}|f_{i}|^{p} \; d\nu = \|(f_{i})_{i\in J}\|_{\ell^{p}}^{p}.
	$$
	Define $\theta:= R\circ \hat{T}:L^{p}(\nu)\to L^{p}(\nu)$.  For every $f\in L^{p}(\nu)$,
	\begin{equation}
		\label{E11}
		\theta(f)=(R\circ \hat{T})(f)=\sum_{i\in J_{\mf{f}}}f_{i} \; \chi_{X_{i}},
	\end{equation}
	where $\hat{T}(f)=(f_{i})_{i\in J}=\mf{f}$. Thus there exists $N\in\Sigma$ with $\nu(N)=0$ such that
	$$
	\theta(f)(x)=\sum_{i\in J_{\mf{f}}}f_{i} \; \chi_{X_{i}}(x), \quad \text{for every } x\in X\setminus N.
	$$
	Since 
	$$
	\nu\left(X_{i}\cap \left( \bigcup_{j\in J_{\mf{f}}\setminus \{i\}} X_{j}\cup N\right)\right)\leq\nu\left(\bigcup_{j\in J_{\mf{f}}\setminus \{i\}} (X_{i}\cap X_{j}) \right)+ \nu(X_{i}\cap N)=0,
	$$
	we have $f_{i}=\theta(f)\restriction X_{i}$ $\nu$-a.e., and hence
	\begin{equation}
		\label{JC}
		\hat{T}(f)=(\theta(f)\restriction X_{i})_{i\in J}.
	\end{equation}
	We now prove that $\{X_{i} : i\in J\}$ is a $\kappa$-separable envelope. Let $E\in \Sigma$ satisfy $\nu(E)<\infty$ and $E\subset \bigcup_{i\in J}X_{i}$. Then $(\chi_{E}\restriction X_{i})_{i\in J}\in \left(\bigoplus_{i\in J} L^{p}(X_{i})\right)_{p}$ since
	$$
	\|(\chi_{E}\restriction X_{i})_{i\in J}\|^{p}_{\ell^{p}}=\sum_{i\in J}\nu(E\cap X_{i}):=\sup\left\{\sum_{j\in F}\nu(E\cap X_{j}) : F\subset J \text{ finite}\right\}\leq \nu(E)<\infty.
	$$
	As $\hat{T}$ is onto, there exists $f_{E}\in L^{p}(\nu)$ such that $\hat{T}(f_{E})=(\chi_{E}\restriction X_{i})_{i\in J}$. Let
	$$
	J_{E}:=\{i\in J : \|\chi_{E}\restriction X_{i}\|_{L^{p}(X_{i})}\neq 0\}=\{i\in J : \nu(E\cap X_{i})\neq 0\}.
	$$
	Since $J_{E}$ is countable and $E\subset \bigcup_{i\in J}X_{i}$, equation \eqref{E11} gives
	$$
	\theta(f_{E})=\sum_{i\in J_{E}}\chi_{E\cap X_{i}}=\chi_{E},
	$$ 
	and because $\theta$ is an isometry,
	$$
	\|f_{E}\|^{p}_{L^{p}}=\|\theta(f_{E})\|^{p}_{L^{p}}=\|\chi_{E}\|^{p}_{L^{p}}=\nu(E).
	$$
	Finally, as $\hat{T}$ is also an isometry,
	$$
	\nu(E)=\|f_{E}\|^{p}_{L^{p}}=\|\hat{T}(f_{E})\|^{p}_{\ell^{p}}=\|(\chi_{E}\restriction X_{i})_{i\in J}\|^{p}_{\ell^{p}}=\sum_{i\in J}\nu(E\cap X_{i}).
	$$
	Thus $\{X_{i} : i\in J \}$ is indeed a $\kappa$-separable envelope. The factorization \eqref{Fac} follows directly from \eqref{JC}. This completes the proof.
\end{proof}

The case $p=2$, excluded in Theorem \ref{TP3}, is simpler. Here $L^{2}(\nu)\cong \ell^{2}(\kappa)$, where $\kappa$ is the cardinality of a maximal orthonormal set in $L^{2}(\nu)$. Since $\kappa$ is infinite,
$$
L^{2}(\nu)\cong \ell^{2}(\kappa) \cong \ell^{2}(\kappa,\ell^{2})\cong \ell^{2}(\kappa, L^{2}[0,1]).
$$
Hence, if $\kappa$ is the cardinality of a maximal orthonormal set in $L^{2}(\nu)$, the isometric isomorphism $L^{2}(\nu)\cong\ell^{2}(\kappa, L^{2}[0,1])$ always holds.
\par We conclude the article with the following conjecture.
\begin{conjecture}
	\label{C4}
	There is a model of ZFC in which $(\R^{\N},\mu)$ does not admit a $\mf{c}$-separable envelope.
\end{conjecture}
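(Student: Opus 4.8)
The plan is to prove Conjecture \ref{C4} as an independence result, forcing a model of ZFC in which no family of the required kind exists, with the CH construction of Section \ref{Se2} serving as the positive pole. The first step is a measure-theoretic reformulation of the envelope property that eliminates the additivity identity \eqref{E}. Given a family $\{X_i : i\in J\}$ of $\sigma$-finite separable sets with $\mu(X_i\cap X_j)=0$ for $i\neq j$, put $U:=\bigcup_{i\in J}X_i$. Then \eqref{E} holds for every finite-measure $E\subset U$ if and only if every finite-measure $E\subset U$ is, modulo a $\mu$-null set, contained in a countable subunion $\bigcup_{i\in c}X_i$ with $c\subset J$ countable. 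For the forward direction one uses that $J_E:=\{i: \mu(E\cap X_i)>0\}$ is countable, since pairwise $\mu$-disjointness gives $\sum_i\mu(E\cap X_i)\leq \mu(E)<\infty$, and then \eqref{E} forces $\mu(E\setminus\bigcup_{i\in J_E}X_i)=0$; the converse is immediate from countable additivity. Thus a $\mathfrak{c}$-separable envelope is exactly a family of $\mathfrak{c}$ many $\sigma$-finite, separable, pairwise $\mu$-null-intersecting sets whose union has the following \emph{internal $\sigma$-covering property}: no finite-measure subset of $U$ leaks transversally across uncountably many pieces.

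The CH construction shows this property is attainable when $\mathfrak{c}=\aleph_1$: the transfinite difference sets $B_\xi$ of \eqref{Eq2}, refined into the finite rectangles $C_\xi$ of Proposition \ref{Pr2.1}, form a family of size $\aleph_1=\mathfrak{c}$ against which every finite-measure set meets only countably many pieces (this is precisely the proof of Proposition \ref{Le5.1}). The scheme is genuinely tied to $\mathfrak{c}=\aleph_1$: it uses that each $\sigma$-finite set is covered by countably many rectangles, that there are exactly $\aleph_1$ rectangles, and that initial segments of an $\omega_1$-enumeration are countable, so the differences $B_\xi$ remain Borel and $\sigma$-finite. When $\mathfrak{c}>\aleph_1$, initial segments of a $\mathfrak{c}$-enumeration become uncountable and the difference sets need not be $\sigma$-finite, so this particular construction collapses. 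The mechanism to be exploited against it is the transversal phenomenon of Subsection \ref{S3}: from an almost disjoint family on $\N$ one builds finite rectangles $C_A$ of measure $1$ with $\mu(C_A\cap C_B)=0$ and $\mu(\mc{C}_{\mf{a}}\cap C_A)=0$ for every unit cube, i.e. positive-measure sets that are $\mu$-null against every coordinate-localized piece. A product-measure computation indicates that any fixed $\sigma$-finite separable set, being covered by countably many rectangles, is $\mu$-null against $C_A$ for all but few $A$; the heuristic is then that absorbing \emph{all} transversal directions into $\mathfrak{c}$ pieces is a diagonalization permitted by the regular cardinal $\omega_1=\mathfrak{c}$ but which should fail in a model where $\mathfrak{c}$ is large while the relevant almost-disjointness/refinement invariant stays small.

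Accordingly, I would begin from a ground model of GCH and force $\mathfrak{c}=\aleph_2$, e.g. by adding $\aleph_2$ Cohen reals, and attempt to show that in the extension no family of size $\mathfrak{c}$ has the internal $\sigma$-covering property. The argument would be a genericity/reflection analysis: given any candidate $\{X_i : i<\omega_2\}$ in the extension, each $X_i$ is coded by countably many rectangles, hence by countably many reals; a $\Delta$-system and nice-name analysis captures the family on a suitable intermediate submodel, and mutual genericity of the added reals is used to manufacture an almost disjoint $A$ whose transversal rectangle satisfies $C_A\subset U$, is $\mu$-null against every $X_i$, yet has $\mu(C_A\cap U)>0$ — a finite-measure subset of $U$ violating the reformulation. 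This requires that the defining data of $\mu$ — the finite rectangles $\mc{F}(\mc{B},\l)$, the volume map, the outer measure \eqref{JCS}, and especially the $\mu$-null ideal on products — be absolute enough between the ground model and the extension, which for Borel rectangles and countably generated restrictions (Lemma \ref{Le1}) should follow from Shoenfield-type absoluteness applied coordinatewise.

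The hard part will be twofold. First, $\mu$ is neither $\sigma$-finite, semifinite, nor localizable (Appendix \ref{AP}), so its measure algebra is not Dedekind complete and Maharam's theorem offers no control; the entire argument must proceed through the internal $\sigma$-covering reformulation and the explicit transversal rectangles, and must accommodate envelopes whose union $U$ is an arbitrary, possibly intricate, proper subset of $\R^{\N}$ rather than all of the space. Second, and most seriously, one must rule out \emph{every} family of $\mathfrak{c}$ separable pieces, not only cube- or rectangle-based ones; this is where a precise identification of the cardinal invariant governing the internal $\sigma$-covering property is indispensable, together with a proof that the chosen forcing keeps it below $\mathfrak{c}$. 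Isolating that invariant — and verifying that the positive direction, Theorem \ref{Th4.3} via Proposition \ref{Pr2.1}, marks exactly its $\aleph_1$ boundary — is the crux on which the statement turns, and is the reason it is offered here as a conjecture rather than a theorem.
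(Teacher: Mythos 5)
The paper does not prove this statement: it is stated, and deliberately left, as an open conjecture (this is why both the abstract and Section \ref{Se4} only \emph{conjecture} the independence of $L^{p}(\mu)\cong\ell^{p}(\co,L^{p}[0,1])$ from ZFC). So there is no proof in the paper to compare yours against, and anything settling Conjecture \ref{C4} would be new mathematics; your submission, as you yourself say in the closing sentence, is a research program rather than a proof. That said, your opening reformulation is correct and worth keeping: for a family $\{X_i : i\in J\}$ of $\sigma$-finite separable sets with pairwise $\mu$-null intersections, the additivity identity \eqref{E} for all finite-measure $E\subset\bigcup_{i\in J}X_i$ is equivalent to every such $E$ being $\mu$-almost contained in a countable subunion; the argument via countability of $J_E$ and countable additivity is exactly the mechanism that drives Proposition \ref{Le5.1} and Theorem \ref{Th4.3}.

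The genuine gaps are in the forcing half, and they are not mere bookkeeping. First, your intended contradiction requires manufacturing a transversal rectangle $C_A$ with $C_A\subset U:=\bigcup_{i}X_i$ (or at least $\mu(C_A\cap U)>0$) while $\mu(C_A\cap X_i)=0$ for every $i$; but the candidate family $\{X_i : i<\omega_2\}$ is arbitrary, its pieces are arbitrary separable $\sigma$-finite sets rather than cubes or rectangles, and its union $U$ --- an uncountable union of Borel sets --- need not be measurable, need not contain any rectangle, and can perfectly well have null or nonmeasurable intersection with every $C_A$, in which case no violating set $E$ is produced. The almost disjoint construction of Subsection \ref{S3} defeats the \emph{unit cube} decomposition, not an arbitrary envelope, and you acknowledge but do not bridge this. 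Second, the set-theoretic core is absent: you do not isolate the cardinal invariant that is supposed to govern your ``internal $\sigma$-covering property,'' you do not prove that adding $\aleph_{2}$ Cohen reals keeps it below $\co$, and the appeal to Shoenfield absoluteness for the $\mu$-null ideal is unsupported --- $\mu$ is not $\sigma$-finite, its outer measure \eqref{JCS} quantifies over arbitrary countable families of infinite-dimensional rectangles, and neither the paper nor your sketch shows that $\mu$-nullity of the relevant sets is $\Sigma^1_2$ coordinatewise. Until those two gaps are closed, the statement remains exactly what the paper says it is: a conjecture.
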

Assuming this conjecture, since \((\mathbb{R}^{\mathbb{N}},\mu)\) is relatively nonatomic and has the separability property, Theorem~\ref{TP3} yields that the statement
\[
L^{p}(\mu)\cong \ell^{p}(\mathfrak{c},L^{p}[0,1]) \quad (p\neq 2)
\]
is independent of ZFC (i.e., neither provable nor refutable within ZFC).

\vspace{10pt}
\noindent \textbf{Acknowledgments:} The authors would like to thank Prof. Krzysztof C. Ciesielski for his insightful and meaningful comments, which contributed to the main result of this paper, Theorem \ref{TeoPri}. His feedback also provided valuable input toward the proof of Theorem \ref{A1} in the Appendix section and helped improve the formulation of Theorem \ref{A2}. Additionally, we appreciate his suggestions, which enhanced the overall presentation of this paper.
\par We also express our gratitude to Prof. Mingu Jung for his helpful comments regarding the classification of isometric isomorphisms in Theorem \ref{TP3}, which contributed to clarifying our discussion on this topic.
\par We also thank two anonymous referees whose insightful comments helped improve considerably the presentation, scope and results of this paper.

\begin{bibdiv}
	\begin{biblist}

\bib{RB}{article}{
	author={Baker, R.},
	title={``Lebesgue measure'' on ${\bf R}^\infty$},
	journal={Proc. Amer. Math. Soc.},
	volume={113},
	date={1991},
	number={4},
	pages={1023--1029},
	issn={0002-9939},
}

\bib{RB2}{article}{
	author={Baker, R. L.},
	title={``Lebesgue measure'' on $\mathbb R^\infty$. II},
	journal={Proc. Amer. Math. Soc.},
	volume={132},
	date={2004},
	number={9},
	pages={2577--2591},
}

\bib{Ci}{book}{
	author={Ciesielski, K.},
	title={Set theory for the working mathematician},
	series={London Mathematical Society Student Texts},
	volume={39},
	publisher={Cambridge University Press, Cambridge},
	date={1997},
}

\bib{C}{book}{
	author={Cohn, D. L.},
	title={Measure theory},
	series={Birkh\"auser Advanced Texts: Basler Lehrb\"ucher. [Birkh\"auser
		Advanced Texts: Basel Textbooks]},
	edition={2},
	publisher={Birkh\"auser/Springer, New York},
	date={2013},
	pages={xxi+457},
}

\bib{EnSt}{article}{
	author={Enflo, P.},
	author={Starbird, T. W.},
	title={Subspaces of $L\sp{1}$\ containing $L\sp{1}$},
	journal={Studia Math.},
	volume={65},
	date={1979},
	number={2},
	pages={203--225},
}

\bib{EnRo}{article}{
	author={Enflo, P.},
	author={Rosenthal, H. P.},
	title={Some results concerning $L\sp{p}(\mu )$-spaces},
	journal={J. Functional Analysis},
	volume={14},
	date={1973},
	pages={325--348},
}

\bib{Je}{article}{
	author={Jessen, B.},
	title={The theory of integration in a space of an infinite number of
		dimensions},
	journal={Acta Math.},
	volume={63},
	date={1934},
	number={1},
	pages={249--323},
}

\bib{La}{book}{
	author={Lacey, H. Elton},
	title={The isometric theory of classical Banach spaces},
	series={Die Grundlehren der mathematischen Wissenschaften},
	volume={Band 208},
	publisher={Springer-Verlag, New York-Heidelberg},
	date={1974},
	pages={x+270},
}

\bib{Lam}{article}{
	author={Lamperti, J.},
	title={On the isometries of certain function-spaces},
	journal={Pacific J. Math.},
	volume={8},
	date={1958},
	number={3},
	pages={459--466},
}

\bib{F}{book}{
	author={Folland, Gerald B.},
	title={Real analysis},
	series={Pure and Applied Mathematics (New York)},
	edition={2},
	note={Modern techniques and their applications;
		A Wiley-Interscience Publication},
	publisher={John Wiley \& Sons, Inc., New York},
	date={1999},
	pages={xvi+386},
}

\bib{Fremlin}{book}{
	author={Fremlin, D. H.},
	title={Measure theory. Vol. 1},
	note={The irreducible minimum;
		Corrected third printing of the 2000 original},
	publisher={Torres Fremlin, Colchester},
	date={2004},
	pages={108+5 pp. (errata)},
}

\bib{Fremlin2}{book}{
	author={Fremlin, D. H.},
	title={Measure theory. Vol. 2},
	note={Broad foundations;
		Corrected second printing of the 2001 original},
	publisher={Torres Fremlin, Colchester},
	date={2003},
	pages={563+12 pp. (errata)},
}

\bib{Fremlin3}{book}{
	author={Fremlin, D. H.},
	title={Measure theory. Vol. 3},
	note={Measure algebras;
		Corrected second printing of the 2002 original},
	publisher={Torres Fremlin, Colchester},
	date={2004},
	pages={693+13},
}

\bib{Fremlin4}{book}{
	author={Fremlin, D. H.},
	title={Measure theory. Vol. 4},
	note={Topological measure spaces. Part I, II;
		Corrected second printing of the 2003 original},
	publisher={Torres Fremlin, Colchester},
	date={2006},
	pages={Part I: 528 pp.; Part II: 439+19 pp. (errata)},
}

\bib{Fremlin51}{book}{
	author={Fremlin, D. H.},
	title={Measure theory. Vol. 5. Set-theoretic measure theory. Part I},
	note={Corrected reprint of the 2008 original},
	publisher={Torres Fremlin, Colchester},
	date={2015},
	pages={329},
}

\bib{Fremlin52}{book}{
	author={Fremlin, D. H.},
	title={Measure theory. Vol. 5. Set-theoretic measure theory. Part II},
	note={Corrected reprint of the 2008 original},
	publisher={Torres Fremlin, Colchester},
	date={2015},
	pages={411},
}

\bib{HaRoSu}{article}{
	author={Haagerup, U.},
	author={Rosenthal, H. P.},
	author={Sukochev, F. A.},
	title={On the Banach-isomorphic classification of $L_p$ spaces of
		hyperfinite von Neumann algebras},
	journal={C. R. Acad. Sci. Paris S\'er. I Math.},
	volume={331},
	date={2000},
	number={9},
	pages={691--695},
}

\bib{Je}{book}{
	author={Jech, T.},
	title={Set theory},
	publisher={Springer-Verlag, New York},
	date={2002},
}

\bib{LiPe}{article}{
	author={Lindenstrauss, J.},
	author={Pelczy\'nski, A.},
	title={Absolutely summing operators in $L{p}$-spaces and their
		applications},
	journal={Studia Math.},
	volume={29},
	date={1968},
	pages={275--326},
}

\bib{M}{article}{
	author={Maharam, Dorothy},
	title={On homogeneous measure algebras},
	journal={Proc. Nat. Acad. Sci. U.S.A.},
	volume={28},
	date={1942},
	pages={108--111},
}

\bib{Pa}{article}{
	author={Paley, R. E. A. C.},
	title={Some theorems on abstract spaces},
	journal={Bull. Amer. Math. Soc.},
	volume={42},
	date={1936},
	number={4},
	pages={235--240},
}

\bib{Pan}{article}{
	author={Pantsulaia, G. R.},
	title={On generators of shy sets on Polish topological vector spaces},
	journal={New York J. Math.},
	volume={14},
	date={2008},
}

\bib{Ro}{article}{
	author={Rosenthal, Haskell P.},
	title={Embeddings of $L^{1}$\ in $L^{1}$},
	conference={
		title={Conference in modern analysis and probability},
		address={New Haven, Conn.},
		date={1982},
	},
	book={
		series={Contemp. Math.},
		volume={26},
		publisher={Amer. Math. Soc., Providence, RI},
	},
	date={1984},
}

\bib{Ro2}{article}{
	author={Rosenthal, H. P.},
	title={Subspaces of $L^{p}$\ which do not contain $L^{p}$\
		isomorphically},
	conference={
		title={S\'eminaire d'Analyse Fonctionnelle (1978--1979)},
	},
	book={
		publisher={\'Ecole Polytech., Palaiseau},
	},
	date={1979},
	pages={Exp. No. 28, 9},
}

\bib{Ro3}{article}{
	author={Rosenthal, Haskell P.},
	title={On subspaces of $L^{p}$},
	journal={Ann. of Math. (2)},
	volume={97},
	date={1973},
	pages={344--373},
}

\bib{SP3}{article}{
	author={Sampedro, J. C.},
	title={Existence of infinite product measures},
	journal={Real Anal. Exchange},
	volume={48},
	date={2023},
	number={2},
	pages={271--284},
}

\bib{SP4}{article}{
	author={Sampedro, Juan Carlos},
	title={On the $L^p$-spaces of projective limits of probability measures},
	journal={J. Theoret. Probab.},
	volume={37},
	date={2024},
	number={3},
	pages={2665--2703},
	issn={0894-9840},
}

\bib{St}{book}{
	author={Stromberg, Karl R.},
	title={Probability for analysts},
	series={Chapman \& Hall Probability Series},
	publisher={Chapman \& Hall, New York},
	date={1994},
}

\bib{Y}{book}{
	author={Yamasaki, Y.},
	title={Measures on infinite-dimensional spaces},
	series={Series in Pure Mathematics},
	volume={5},
	publisher={World Scientific Publishing Co., Singapore},
	date={1985},
	pages={x+256},
}

	\end{biblist}
\end{bibdiv}

\appendix

\section{Some properties of the measure $\mu$}\label{AP}

In this appendix, we prove some properties and curiosities of the measure $\mu$ not present in the literature.
\par We start by proving that the measure $\mu$ is not localizable. Recall that a measure space $(X,\Sigma,\nu)$ is localizable if $\nu$ is semi-finite and every $\mc{E}\subset \Sigma$ admits an essential supremum, that is, there is $H_{\mc{E}}\in\Sigma$ satisfying:
\begin{itemize}
	\item[{\rm (a)}] $\nu(E\setminus H_{\mathcal E})=0$ (i.e., $E\subset^* H_{\mathcal E}$) 
	for every $E\in\mathcal E$; 
	\item[{\rm (b)}] if $G\in \Sigma$ is such that  $\nu(E\setminus G)=0$ (i.e., $E\subset^* G$) 
	for every $E\in\mathcal E$, then 
	$\nu(H_{\mathcal E}\setminus G)=0$ (i.e., $H_{\mathcal E}\subset^*  G$).
\end{itemize}

\begin{theorem}
	\label{A1}
	The measure space $(\R^{\N},\mu)$ is not localizable.
\end{theorem}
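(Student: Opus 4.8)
The plan is to refute localizability by violating the essential-supremum clause of its definition: I will exhibit a family $\mc E\subset\mc B_{\infty}$ that admits no essential supremum, which already suffices since localizability requires \emph{every} $\mc E\subset\Sigma$ to have an essential supremum. The natural candidate is the tiling by unit cubes, $\mc E:=\{\mc C_{\mf a}:\mf a\in I\}$ with $I=\Z^{\N}$, whose essential supremum, if it existed, would have to be an essentially minimal version of $\R^{\N}=\bigsqcup_{\mf a\in I}\mc C_{\mf a}$. The leverage comes from the pathological rectangles $C_{A}=\bigtimes_{n}\mc I_{A}$ built in Section \ref{S3}: there are $\co$ many of them, they satisfy $\mu(C_{A}\cap C_{B})=0$ for $A\neq B$ by \eqref{e}, each has $\mu(C_{A})=1$, and crucially $\mu(\mc C_{\mf a}\cap C_{A})=0$ for every $\mf a\in I$ and every $A\in\mc A$. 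Thus the $C_{A}$ are $\co$ many pairwise-almost-disjoint positive-measure sets that are invisible to the cube decomposition.

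First I would record that $\R^{\N}$ is \emph{not} the essential supremum of $\mc E$: for each $A\in\mc A$ the set $G_{A}:=\R^{\N}\setminus C_{A}$ is an upper bound of $\mc E$ in the measure algebra, since $\mu(\mc C_{\mf a}\setminus G_{A})=\mu(\mc C_{\mf a}\cap C_{A})=0$ for all $\mf a$, while $\mu(\R^{\N}\setminus G_{A})=\mu(C_{A})=1>0$, so $G_{A}$ is a strictly smaller upper bound. Next I would reduce the nonexistence of \emph{any} essential supremum to a single statement about the $\sigma$-ideal $\mc Z:=\{Z\in\mc B_{\infty}:\mu(Z\cap\mc C_{\mf a})=0\ \text{for all }\mf a\in I\}$ of \emph{cube-null} sets. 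Indeed, suppose $H$ were an essential supremum of $\mc E$. Clause (a) forces $\mu(\mc C_{\mf a}\setminus H)=0$ for all $\mf a$, so $N:=\R^{\N}\setminus H\in\mc Z$; clause (b), applied to each upper bound $\R^{\N}\setminus Z$ with $Z\in\mc Z$, forces $\mu(H\cap Z)=0$, i.e. $Z\subset^{*}N$ for every $Z\in\mc Z$. Hence an essential supremum exists if and only if the ideal $\mc Z$ has a greatest element modulo null sets, and such an $N$ would in particular satisfy $C_{A}\subset^{*}N$ for all $A\in\mc A$.

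The main obstacle — and the heart of the matter — is therefore to prove that $\mc Z$ has no greatest element, i.e. that no measurable cube-null $N$ can absorb all the $C_{A}$ simultaneously. The difficulty is genuine: since $\mc Z$ contains uncountably many pairwise almost disjoint positive-measure sets, cardinality is no obstruction to packing them into one set of infinite measure, and countable additivity gives no grip on the (possibly non-measurable) uncountable union $\bigcup_{A}C_{A}$. I would attempt to close the argument by selecting the almost-disjoint family $\mc A$ and the interval data $I^{A}_{k},J^{A}_{k}$ so rigidly that any measurable $N$ with $C_{A}\subset^{*}N$ for all $A$ is forced to pick up positive mass in some cube $\mc C_{\mf a}$, contradicting $N\in\mc Z$; the countability phenomenon behind Theorem \ref{T4.1} and the argument in Section \ref{S3} (each $f\in L^{p}(\mu)$, hence each finite-measure set, meets only countably many $C_{A}$) should be the decisive input, as it prevents $N$ from being assembled from countably many harmless pieces. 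Finally, I note a shortcut that is fully consistent with the definition adopted here: because localizability presupposes semifiniteness, it would alternatively suffice to prove that $\mu$ is not semifinite, which yields non-localizability at once; the essential-supremum route above is the more informative one, since it pins the failure on the Dedekind incompleteness of the measure algebra of $\mu$.
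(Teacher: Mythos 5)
Your two preliminary steps are correct: $G_A:=\R^{\N}\setminus C_A$ is indeed an upper bound for the cube family (so $\R^{\N}$ is not its essential supremum), and your equivalence ``the tiling $\{\mc C_{\mf a}\}$ has an essential supremum if and only if the $\sigma$-ideal $\mc Z$ of cube-null sets has a greatest element modulo null sets'' checks out in both directions. The genuine gap is the step you yourself call the heart of the matter: you never prove that $\mc Z$ has no greatest element, i.e.\ that no measurable cube-null $N$ can satisfy $C_A\subset^* N$ for every $A\in\mc A$. What you offer instead (``selecting the almost-disjoint family and the interval data so rigidly that\dots'') is a hope, not an argument, and the countability phenomenon of Theorem \ref{T4.1} does not obviously bear on it: a greatest element of $\mc Z$ need not be ``assembled from countably many pieces''; it is just some Borel set. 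To see that absorbing all the $C_A$ at once is not by itself contradictory, note that the single Borel set $N_0:=\{x\in\R^{\N} : \sup_n|x_n|=\infty\}$ satisfies $C_A\subset^* N_0$ for \emph{every} $A\in\mc A$, since $C_A\setminus N_0=\bigcup_{M\in\N}\bigtimes_{n\in\N}(\mc I_A\cap[-M,M])$ and each of these rectangles has measure $\prod_{n\in\N}\l(\mc I_A\cap[-M,M])=0$ because $\l(\mc I_A\cap[-M,M])<1$. The only thing that disqualifies $N_0$ is that it fails cube-nullity (it contains every unbounded cube $\mc C_{\mf a}$), and against cube-nullity you supply no tool at all. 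Whether the explicit family $\{\mc C_{\mf a}:\mf a\in\Z^{\N}\}$ admits an essential supremum is a delicate structural question about the measure algebra of $\mu$, which your proposal leaves open.

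The paper sidesteps exactly this difficulty with a pigeonhole argument that is non-constructive precisely where your plan gets stuck. It takes the family $\EEE$ of $\co$ many \emph{pairwise disjoint} unit cubes $\bigtimes_{i\in\N}[s_i,s_i+1]$, $s\in\{0,2\}^{\N}$, assumes every subfamily $\EE\subset\EEE$ has an essential supremum $H_{\EE}$, and notes that $|{\mathcal P}(\EEE)|=2^{\co}>\co=|\mc B_{\infty}|$ forces two distinct subfamilies $\EE\neq\EE'$ to share $H_{\EE}=H_{\EE'}$; picking $E\in\EE\setminus\EE'$, the set $H_{\EE}\setminus E$ is an upper bound for $\EE'$ (by disjointness), so clause (b) gives $H_{\EE'}\subset^* H_{\EE}\setminus E$, whence $E\subset^* H_{\EE}\setminus E$ and $\mu(E)=0$, a contradiction. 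Thus the paper only shows that \emph{some} unspecified subfamily lacks an essential supremum -- which suffices, since localizability demands one for every subfamily. Your own ingredients can be salvaged in exactly this way: run the counting argument on subfamilies of $\{C_A : A\in\mc A\}$ (pairwise a.e.-disjoint, each of measure $1$), or of the cube tiling itself, instead of trying to exhibit one explicit family with no essential supremum. Finally, your closing shortcut is logically valid under the paper's definition (localizability presupposes semi-finiteness), but it is not free: non-semi-finiteness is precisely Proposition \ref{NF} of the appendix, proved there via the hyperplane atom of Proposition \ref{HY}, and your proposal asserts rather than proves it.
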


\begin{proof}
	To see this, first notice that $\mc{B}_{\infty}$ contains the following family $\EEE$ of continuum many pairwise disjoint sets of positive finite $\mu$-measure 
	(in fact, of measure 1):
	\[
	\EEE=\left\{\prod_{i\in\mathbb{N}}[s_i,s_i+1]\colon s\in\{0,2\}^\mathbb{N}\right\}.
	\]
	Now, by way of contradiction, assume that $\mu$ is localizable
	and for every $\EE\in{\mathcal P}(\EEE)$, let
	$H_\EE$ be its essential supremum. 
	Since $|{\mathcal P}(\EEE)|=2^{2^{\omega}}>2^\omega=|\mc{B}_{\infty}|$, 
	there are distinct $\EE,\EE'\in {\mathcal P}(\EEE)$ with $H_\EE=H_{\EE'}$. 
	Since $\EE\neq \EE'$, there is an $E\in \EEE$ that belongs to one of them, but not the other, say 
	$E\in \EE\setminus \EE'$. 
	Then we must have $E\subset^* H_\EE$.
	At the same time, $H_\EE=H_{\EE'}\subset^* H_\EE\setminus E$,
	a contradiction, since $H_\EE\not\subset^* H_\EE\setminus E$. 
\end{proof}

The proof of Theorem \ref{A1} shows that there is $\mc{E}\subset \Sigma$ with no essential supremum. Moreover, we will prove in Corollary \ref{NF} that $\mu$ is neither semi-finite.
	
The second part of this appendix studies the atomic properties of $(\R^{\N},\mu)$. The next result shows that $(\R^{\N},\mu)$ is not purely nonatomic.

\begin{proposition}
	\label{HY}
	There exist measurable subsets \(A\subset\mathbb{R}^{\mathbb{N}}\) that are atoms for \(\mu\).
\end{proposition}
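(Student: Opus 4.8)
The plan is to exhibit a concrete Borel set $E\in\mc{B}_{\infty}$ of infinite measure that meets every finite rectangle in a $\mu$-null set. Such a set is automatically an atom in the infinite-measure sense of the definition recalled above: since $\mu(E)=\infty>0$, every measurable $B\subset E$ with $\mu(B)<\infty$ will be forced to be null, so $\mu(B)\in\{0,\infty\}=\{0,\mu(E)\}$. Concretely, I would fix a summable sequence of nonempty intervals, e.g. $B_{i}:=[0,2^{-i}]$, and set
$$
E:=\left\{x\in\R^{\N} : x_{i}\in B_{i}\ \text{for infinitely many}\ i\right\}=\bigcap_{m\in\N}\bigcup_{i\geq m}\{x : x_{i}\in B_{i}\},
$$
which is manifestly Borel. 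The whole argument then rests on two complementary properties of $E$: it is null on every finite rectangle, and it is not contained in any countable union of finite rectangles.

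For the first property, let $R=\bigtimes_{i}C_{i}\in\mc{F}(\mc{B},\l)$; I may assume $\mu(R)>0$, since otherwise $E\cap R\subset R$ is trivially null. Because $\prod_{i}\l(C_{i})\in(0,\infty)$ forces $\l(C_{i})\to 1$, there is $m_{0}$ with $\l(C_{i})\geq\tfrac12$ for $i\geq m_{0}$. Recalling that the restriction of $\mu$ to $R$ is the product measure $\bigotimes_{i}(\l\!\restriction C_{i})$, I would estimate, for $m\geq m_{0}$,
$$
\mu(E\cap R)\leq\sum_{i\geq m}\mu\bigl(\{x:x_{i}\in B_{i}\}\cap R\bigr)=\mu(R)\sum_{i\geq m}\frac{\l(C_{i}\cap B_{i})}{\l(C_{i})}\leq 2\,\mu(R)\sum_{i\geq m}\l(B_{i}),
$$
and since $\sum_{i}\l(B_{i})=\sum_{i}2^{-i}<\infty$, letting $m\to\infty$ yields $\mu(E\cap R)=0$. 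This is nothing but the first Borel--Cantelli lemma: only countable subadditivity is used, no independence.

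For the second property I would argue by contradiction via a diagonal construction. Suppose $E\subset\bigcup_{n}R_{n}$ with $R_{n}=\bigtimes_{i}C^{(n)}_{i}\in\mc{F}(\mc{B},\l)$. Since $\prod_{i}\l(C^{(n)}_{i})<\infty$ forces each factor to be finite, we have $C^{(n)}_{i}\neq\R$ for all $i,n$. Split $\N=S\sqcup T$ into two infinite sets, fix an injection $n\mapsto j_{n}\in T$, and define $x$ by $x_{i}:=0\in B_{i}$ for $i\in S$, and $x_{j_{n}}\in\R\setminus C^{(n)}_{j_{n}}$ (nonempty) for each $n$, with the remaining coordinates set to $0$. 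Then $x_{i}\in B_{i}$ for all $i\in S$, so $x\in E$, while $x_{j_{n}}\notin C^{(n)}_{j_{n}}$ gives $x\notin R_{n}$ for every $n$, a contradiction. Hence no countable cover of $E$ by finite rectangles exists, and by the definition of $\mu^{\ast}$ (with $\inf\varnothing=\infty$) we conclude $\mu(E)=\infty$.

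It then remains to assemble the pieces: if $B\subset E$ is measurable with $\mu(B)<\infty$, then $B$ lies in a countable union $\bigcup_{n}\mathscr{C}_{n}$ of finite rectangles, whence $\mu(B)\leq\sum_{n}\mu(E\cap\mathscr{C}_{n})=0$; thus every measurable subset of $E$ has measure $0$ or $\mu(E)=\infty$, so $E$ is an atom and $(\R^{\N},\mu)$ is not purely nonatomic. I expect the genuine obstacle to be the interplay between the two properties in Steps~2 and~3: one must keep $E$ \emph{small} enough to be null on every finite rectangle (which is what forces $\sum_{i}\l(B_{i})<\infty$), yet \emph{large} enough to evade every countable cover, and the diagonal argument is precisely what reconciles this tension. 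As a byproduct the same $E$ is a set of infinite measure with no subset of finite positive measure, which is exactly the witness needed for the non-semi-finiteness asserted in Corollary~\ref{NF}.
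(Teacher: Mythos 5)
Your proof is correct, and it establishes exactly the right key property — a Borel set of infinite measure that meets every finite rectangle in a $\mu$-null set is an atom — but your witness is genuinely different from the paper's. The paper uses the hyperplane $H:=\{x\in\R^{\N} : x_{1}=0\}$ instead of your lim\,sup set $E$. For $H$, the ``null on every finite rectangle'' half is immediate and needs no Borel--Cantelli estimate: intersecting a cover $\bigcup_{n}C_{n}$ of a finite-measure subset $B\subset H$ with $H$ pinches the first factor to $\{0\}$, so each $C_{n}\cap H=\{0\}\times\bigtimes_{i\ge 2}C^{n}_{i}$ has volume $\l(\{0\})\cdot\prod_{i\ge 2}\l(C^{n}_{i})=0$, whence $\mu(B)=0$. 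The infinite-measure half is the same diagonal argument you give, except that yours needs the extra wrinkle of splitting $\N=S\sqcup T$ so that infinitely many coordinates remain free to keep the diagonal point inside $E$, whereas for $H$ only the first coordinate must be preserved. Both proofs rest on the same convention, invoked identically, that every factor of a finite rectangle has finite Lebesgue measure (so that $\R\setminus C^{(n)}_{i}\neq\varnothing$), and both yield, as you note, precisely the witness needed for non-semi-finiteness in Proposition \ref{NF}. What the paper's choice buys is brevity and a one-line family of further atoms ($\{x : x_{1}=a\}$ for every $a\in\R$); what your construction buys is flexibility — it works for any summable sequence of intervals, does not rely on degenerating a single coordinate to a point, and illustrates that the atoms of $\mu$ need not be contained in any ``thin'' affine subset.
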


\begin{proof}
	Consider the hyperplane
	\begin{equation}
		\label{H}
	H := \{(x_{i})_{i\in\N}\in\mathbb{R}^{\mathbb{N}}: x_1=0\}.
	\end{equation}
	It is Borel since it is the preimage of \(\{0\}\) under the continuous projection \(x\mapsto x_1\).
	We claim that \(\mu(H)=\infty\) and that \(H\) is an atom.
	Suppose by contradiction that \(H\) can be covered by a countable family \((R_n)_{n\in\N}\subset\mathcal{F}(\mc{B},\l)\).
	Write 
	$$R_n=\bigtimes_{i\in \N}R^{n}_{i}, \quad R^{n}_{i}\in \mc{B}.$$
	Since $R_{n}\in\mc{F}(\mc{B},\l)$, necessarily $\l(R^{n}_{i})<\infty$ for every $i\in\N$. Choose, for every $n\in\N$, points
	$x_{n+1}\in\mathbb{R}\setminus R_{n+1}^{n}$. Note that this is possible since \(\lambda(R_{n+1}^{n})<\infty\) while \(\lambda(\mathbb{R})=\infty\).
	Define $(y_{i})_{i\in\N}\in\R^{\N}$ by
	$$
	y_{1}=0, \;\; y_{i}:=x_{i}, \;\; i\geq 2.
	$$
	Then \((y_{i})_{i\in\N}\in H\) but \((y_{i})_{i\in\N}\notin R_n\) for every \(n\in\N\), contradicting the cover.
	Hence no such countable cover exists, and therefore by the definition of $\mu$, equation \eqref{JCS}, it follows that
	\[
	\mu(H)=\mu^*(H)=\inf\varnothing=\infty.
	\]
	Finally, we prove that \(H\) is an atom.
	Let \(B\subset H\) be measurable with \(\mu(B)<\infty\). Then, by definition of \(\mu\) there exists a cover \(B\subset\bigcup_{n\in\N} C_n\) with \(C_n\in\mathcal{F}(\mc{B},\l)\) and \(\sum_{n\in\N}\mu(C_n)<\infty\). Write
	$$C_n=\bigtimes_{i\in \N}C^{n}_{i}, \quad C^{n}_{i}\in \mc{B}.$$
	Intersecting with \(H\) we get
	\begin{align*}
	B\subset \bigcup_{n\in\N} (C_n\cap H) & = \bigcup_{n\in\N} \Big[\Big(\{0\}\times\bigtimes_{i=2}^{\infty} \R\Big)\cap \Big(\bigtimes_{i\in\N}C^{n}_{i}\Big)\Big] \\
	& =\bigcup_{n\in\N} \Big(\{0\}\times\bigtimes_{i=2}^{\infty} C_i^{n}\Big),
	\end{align*}
	and for each \(n\in\N\), 
	$$
	\mu(C_n\cap H)=\mu\left(\{0\}\times\bigtimes_{i=2}^{\infty} C_i^{n}\right)=\l(0)\cdot \prod_{i= 2}^{\infty}\l(C_{i}^{n})=0.
	$$
	Hence, 
	$$
	\mu(B)\le\sum_{n\in\N} \mu(C_n\cap H)=0,$$
	so \(\mu(B)=0\).
	Thus every measurable \(B\subset H\) has \(\mu(B)\in\{0,\infty\}\), i.e., \(H\) is an atom. The same argument shows that $\{(x_{i})_{i\in\N}\in\R^{\N} : x_{1}=a\}$ is also an atom for $\mu$ for every $a\in\R$.
\end{proof}

The proof of Proposition \ref{HY} yields the following result.

\begin{proposition}
	\label{NF}
	The measure space \((\mathbb{R}^{\mathbb{N}},\mu)\) is not semi-finite.
\end{proposition}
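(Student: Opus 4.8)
The plan is to unwind the definition of semi-finiteness and then notice that the required witness has already been produced, essentially verbatim, inside the proof of Proposition \ref{HY}. Recall that a measure space $(X,\Sigma,\nu)$ is \emph{semi-finite} if for every $A\in\Sigma$ with $\nu(A)=\infty$ there is a measurable $B\subset A$ with $0<\nu(B)<\infty$. Consequently, to show that $\mu$ fails to be semi-finite it suffices to exhibit a single measurable set of infinite $\mu$-measure all of whose measurable subsets have measure $0$ or $\infty$.

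First I would take $A:=H$, the hyperplane $\{(x_i)_{i\in\N}\in\R^{\N}:x_1=0\}$ introduced in \eqref{H}. The proof of Proposition \ref{HY} already establishes the two facts I need: that $\mu(H)=\infty$, and that $H$ is an atom, so that every measurable $B\subset H$ satisfies $\mu(B)\in\{0,\mu(H)\}=\{0,\infty\}$. Concretely, the covering argument there shows that any measurable $B\subset H$ admitting a cover $\{C_n\}_{n\in\N}\subset\mc{F}(\mc{B},\l)$ of finite total volume must be $\mu$-null, because each slice $C_n\cap H$ lies in the hyperplane $x_1=0$ and hence has $\mu(C_n\cap H)=\l(0)\cdot\prod_{i\geq 2}\l(C_i^n)=0$.

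Combining these, no measurable subset $B\subset H$ can satisfy $0<\mu(B)<\infty$: such a $B$ would have finite measure, and therefore, by the atom property, measure $0$, a contradiction. Thus $H$ is a set of infinite $\mu$-measure containing no subset of positive finite measure, which is precisely the negation of semi-finiteness, and the proposition follows. Since the entire content is already contained in Proposition \ref{HY}, I do not anticipate any genuine obstacle; the only point requiring care is to align the definition of semi-finiteness with the conclusion ``every finite-measure subset of $H$ is null'' extracted from the atom argument, rather than with the merely formal statement that $H$ is an atom.
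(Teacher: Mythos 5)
Your proposal is correct and is essentially the paper's own proof: the paper likewise takes the hyperplane $H$ from \eqref{H}, invokes the facts $\mu(H)=\infty$ and that every measurable $B\subset H$ has $\mu(B)\in\{0,\infty\}$ (established in the proof of Proposition \ref{HY}), and concludes that no subset of $H$ has positive finite measure. Your extra care in aligning the atom property with the negation of semi-finiteness is sound but adds nothing beyond what the paper's two-line argument already contains.
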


\begin{proof}
		The hyperplane \(H\) defined in \eqref{H} satisfies \(\mu(H)=\infty\) while there is no \(E\in\mathcal{B}_{\infty}\) with \(E\subset H\) and \(0<\mu(E)<\infty\). Hence \(\mu\) is not semi-finite.
\end{proof}

Although \((\mathbb{R}^{\mathbb{N}},\mu)\) admits atoms, its restriction to any \(\sigma\)-finite measurable subset is purely nonatomic; in particular, \((\mathbb{R}^{\mathbb{N}},\mu)\) is relatively nonatomic. We begin with the following lemma.

\begin{lemma}
	\label{lem:mass-in-rect}
	If \(E\in\mathcal{B}_\infty\) satisfies \(0<\mu(E)<\infty\), then there exists \(R\in\mathcal{F}(\mc{B},\l)\) such that
	\(0<\mu(E\cap R)\le \mu(R)\).
\end{lemma}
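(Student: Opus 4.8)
The plan is to read off a witness rectangle directly from the definition of $\mu$ as the restriction of the outer measure $\mu^{\ast}$ in \eqref{JCS}. The hypothesis $\mu(E)<\infty$ is exactly what guarantees that the infimum defining $\mu^{\ast}(E)$ is not over the empty family (so that it is not forced to equal $\inf\varnothing=\infty$). Hence I can fix a countable cover $\{\mathscr{C}_{n}\}_{n\in\N}\subset\mathcal{F}(\mc{B},\l)$ with $E\subset\bigcup_{n\in\N}\mathscr{C}_{n}$ and, say, $\sum_{n\in\N}\vol(\mathscr{C}_{n})<\mu^{\ast}(E)+1<\infty$.

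Next I would use the hypothesis $\mu(E)>0$ together with countable subadditivity. Since $E=\bigcup_{n\in\N}(E\cap\mathscr{C}_{n})$, monotonicity and $\sigma$-subadditivity of $\mu$ give
$$
0<\mu(E)=\mu\!\left(\bigcup_{n\in\N}(E\cap\mathscr{C}_{n})\right)\le\sum_{n\in\N}\mu(E\cap\mathscr{C}_{n}).
$$
Because the left-hand side is strictly positive, the series on the right cannot be identically zero, so there exists an index $n_{0}\in\N$ with $\mu(E\cap\mathscr{C}_{n_{0}})>0$. Setting $R:=\mathscr{C}_{n_{0}}\in\mathcal{F}(\mc{B},\l)$ then yields $\mu(E\cap R)>0$.

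Finally, the remaining inequality $\mu(E\cap R)\le\mu(R)$ is immediate from monotonicity of $\mu$, since $E\cap R\subset R$. This closes the argument. As for the main obstacle: there is essentially none, as the whole content is packaged into the definition of $\mu$ and the pigeonhole principle applied to a convergent series. The only point that genuinely requires the hypotheses is the existence of the cover itself, which is precisely where $\mu(E)<\infty$ is used; the positivity $\mu(E)>0$ is then used only to select the nonnull term $\mathscr{C}_{n_{0}}$.
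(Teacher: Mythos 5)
Your proof is correct and follows essentially the same route as the paper's: both extract a countable cover of $E$ by finite rectangles (which exists precisely because $\mu(E)<\infty$), apply $\sigma$-subadditivity to conclude that $\mu(E)\le\sum_{n\in\N}\mu(E\cap\mathscr{C}_{n})$, and use $\mu(E)>0$ to pick an index with $\mu(E\cap\mathscr{C}_{n_{0}})>0$. The only cosmetic difference is that you phrase the existence of the cover via the nonemptiness of the family in \eqref{JCS} and spell out the trivial monotonicity step $\mu(E\cap R)\le\mu(R)$, both of which the paper leaves implicit.
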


\begin{proof}
	By the definition of $\mu$, equation \eqref{JCS}, there is a cover
	\(E\subset\bigcup_{n\in\N} R_n\) with \(R_n\in\mathcal{F}(\mc{B},\l)\) such that
	$$\sum_{n\in\N} \mu(R_n)<\mu(E)+1.$$
	Then
	\[
	\mu(E) = \mu\Big(\bigcup_{n\in\N}(E\cap R_n)\Big)\le \sum_{n\in\N} \mu(E\cap R_n).
	\]
	If \(\mu(E\cap R_n)=0\) for all \(n\in\N\), we get \(\mu(E)=0\), a contradiction. Hence there exists \(n_{0}\in\N\) such that \(\mu(E\cap R_{n_{0}})>0\).
\end{proof}

\begin{proposition}
	\label{prop:purely-nonatomic-finite}
	Let \(E\in\mathcal{B}_\infty\) with \(\mu(E)<\infty\).
	Then the restricted space \((E,\mu)\) is purely nonatomic.
\end{proposition}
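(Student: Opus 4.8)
The plan is to show that no measurable subset of $E$ of positive measure is an atom, by first capturing positive mass inside a finite rectangle via Lemma \ref{lem:mass-in-rect} and then exploiting the nonatomicity of Lebesgue measure along the first coordinate. Since $\mu(E)<\infty$, every measurable $A\subset E$ automatically satisfies $\mu(A)<\infty$, so it suffices to fix an arbitrary measurable $A\subset E$ with $\mu(A)>0$ and produce a measurable $B\subset A$ with $0<\mu(B)<\mu(A)$.

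First I would apply Lemma \ref{lem:mass-in-rect} to $A$ (which has $0<\mu(A)<\infty$) to obtain a finite rectangle $R=\bigtimes_{i\in\N}C_{i}\in\mathcal{F}(\mc{B},\l)$ with $\mu(A\cap R)>0$. Since $\mu(A\cap R)>0$ forces $\vol(R)=\prod_{i}\l(C_{i})>0$, each factor satisfies $\l(C_{i})>0$; in particular $\l\!\restriction C_{1}$ is a nonatomic finite measure. Recall, as noted before the reduction theorem in the excerpt, that the restriction of $\mu$ to $R$ is the standard product measure $\bigotimes_{i\in\N}(\l\!\restriction C_{i})$.

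Next I would slice $A\cap R$ along the first coordinate. For $t\in\R$ set $S_{t}:=\{x\in\R^{\N}: x_{1}\le t\}\in\mc{B}_{\infty}$ (the preimage of $(-\infty,t]$ under the continuous projection $x\mapsto x_{1}$) and define $\psi(t):=\mu(A\cap R\cap S_{t})$. The function $\psi$ is nondecreasing, and its only possible jumps occur across the slices $\{x_{1}=t\}$; but $\mu(R\cap\{x_{1}=t\})=\l(C_{1}\cap\{t\})\cdot\prod_{i\ge 2}\l(C_{i})=0$ because $\l(\{t\})=0$, so $\psi$ has no jumps and, being monotone, is continuous. Moreover, using $\mu(A\cap R)<\infty$ together with continuity of $\mu$ from below and from above along the nested families $S_{t}$, one gets $\psi(t)\to 0$ as $t\to-\infty$ and $\psi(t)\to\mu(A\cap R)$ as $t\to+\infty$. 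By the intermediate value theorem there is $t^{\ast}$ with $\psi(t^{\ast})=\tfrac{1}{2}\mu(A\cap R)$.

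Finally I would set $B:=A\cap R\cap S_{t^{\ast}}\subset A$, so that $0<\mu(B)=\tfrac{1}{2}\mu(A\cap R)\le\tfrac{1}{2}\mu(A)<\mu(A)$; hence $A$ is not an atom, and $(E,\mu)$ is purely nonatomic. The one point requiring care—and the main (though mild) obstacle—is the continuity of $\psi$, i.e.\ the vanishing of coordinate-hyperplane slices inside $R$; this is precisely where the nonatomicity of $\l\!\restriction C_{1}$ enters and is what collapses the infinite-dimensional statement to a one-dimensional intermediate value argument.
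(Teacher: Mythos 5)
Your proof is correct, but it follows a genuinely different route from the paper's. Both proofs start identically: apply Lemma \ref{lem:mass-in-rect} to capture positive mass of $A$ in a finite rectangle $R=\bigtimes_{i\in\N}C_i$. From there the paper argues by bisection and contradiction: it splits off the easy case $\mu(A\cap R)<\mu(A)$, and otherwise halves $C_1$ (using nonatomicity of $\l$ to pick $D\subset C_1$ with $\l(D)=\l(C_1)/2$) to write $R=R^-\sqcup R^+$ with $\mu(R^\pm)=\mu(R)/2$; if neither piece captures a proper fraction of $\mu(A)$, it iterates, producing nested rectangles $R_n$ with $\mu(R_n)=2^{-n}\mu(R)$ and $\mu(A\cap R_n)=\mu(A)$, which is absurd once $2^{-n}\mu(R)<\mu(A)$. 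You instead make a single, direct argument: the distribution function $\psi(t)=\mu\left(A\cap R\cap\{x_1\le t\}\right)$ is monotone with jump at $t$ equal to $\mu(A\cap R\cap\{x_1=t\})\le\mu(R\cap\{x_1=t\})=\l(C_1\cap\{t\})\prod_{i\ge 2}\l(C_i)=0$ (here $\prod_{i\ge2}\l(C_i)<\infty$ since $0<\vol(R)<\infty$), hence continuous, and the intermediate value theorem yields a subset of exactly half the mass of $A\cap R$. The supporting details you flag are all sound: continuity from above/below is legitimate because everything sits inside $R$, which has finite $\mu$-measure, and $S_t\in\mc{B}_{\infty}$ since the projection is continuous. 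What your approach buys is the elimination of both the case distinction and the iteration/contradiction, and in fact it proves something slightly stronger (a Sierpi\'nski-type intermediate value property: subsets of $A\cap R$ of any prescribed measure in $[0,\mu(A\cap R)]$ exist); what the paper's approach buys is that it stays entirely within the combinatorics of halving rectangles and countable additivity, never invoking continuity of a real function or the IVT.
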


\begin{proof}
	Let \(A\subset E\) be measurable with \(\mu(A)>0\).
	By Lemma~\ref{lem:mass-in-rect}, pick \(R\in\mathcal{F}(\mc{B},\l)\) with \(\mu(A\cap R)>0\).
	If \(\mu(A\cap R)<\mu(A)\), then \(B:=A\cap R\) satisfies \(0<\mu(B)<\mu(A)\), so \(A\) is not an atom.
	
	Otherwise, \(\mu(A\cap R)=\mu(A)\), hence \(\mu(A\setminus R)=0\). Denote \(R=\bigtimes_{i\in\N} C_i\). Then, since $(\R,\l)$ is purely nonatomic, there exists a positive measure Borel set $D\in\mc{B}$ such that \(D\subset C_1\) and \(\lambda(D)=\lambda(C_1)/2\). The rectangles 
	\[
	R^-:=D\times\bigtimes_{i=2}^{\infty} C_i,\qquad
	R^+:=(C_1\setminus D)\times\bigtimes_{i=2}^{\infty} C_i,
	\]
	satisfy \(R=R^-\sqcup R^+\) and \(\mu(R^-)=\mu(R^+)=\mu(R)/2\).
	Consequently,
	\[
	\mu(A)=\mu(A\cap R)=\mu(A\cap R^-)+\mu(A\cap R^+).
	\]
	If one summand lies in \((0,\mu(A))\), we are done.
	If not, we can suppose without lost of generality that \(\mu(A\cap R^-)=\mu(A)\) and \(\mu(A\cap R^+)=0\).
	Replace \(R\) by \(R^-\) and iterate.
	After \(n\in\N\) steps we get nested rectangles 
	$$
	\cdots \subset R_{n} \subset R_{n-1}\subset \cdots \subset R_{2} \subset R_{1}=R^{-}\subset R,
	$$
	such that \(\mu(R_{n})=2^{-n}\mu(R)\) and \(\mu(A\cap R_{n})=\mu(A)\).
	Since $\mu(A)>0$, there exists \(n_{0}\in\N\) such that \(2^{-n_{0}}\mu(R)<\mu(A)\), contradicting \(\mu(A)=\mu(A\cap R_{n_{0}})\le \mu(R_{n_{0}})\).
	Hence at some step we obtain \(B\subset A\) with \(0<\mu(B)<\mu(A)\).
	Thus no \(A\subset E\) with \(\mu(A)>0\) is an atom, i.e. \((E,\mu)\) is purely nonatomic.
\end{proof}

\begin{corollary}
	\label{cor:purely-nonatomic-sigma-finite}
	Let \(E\in\mathcal{B}_\infty\) be \(\sigma\)-finite for \(\mu\).
	Then \((E,\mu)\) is purely nonatomic.
\end{corollary}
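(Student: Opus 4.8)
The plan is to reduce everything to the finite-measure case already established in Proposition \ref{prop:purely-nonatomic-finite}. Since $E$ is $\sigma$-finite, I would begin by fixing a countable cover $E=\bigcup_{n\in\N}E_n$ with each $E_n\in\mathcal{B}_\infty$ satisfying $\mu(E_n)<\infty$. To prove that $(E,\mu)$ is purely nonatomic, I take an arbitrary measurable $A\subset E$ with $\mu(A)>0$ and aim to produce a measurable $B\subset A$ with $0<\mu(B)<\mu(A)$, which is precisely what it means for $A$ to fail to be an atom.

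First I would locate a piece of $A$ of positive, finite measure. By countable subadditivity, $\mu(A)\le\sum_{n\in\N}\mu(A\cap E_n)$, so if every $\mu(A\cap E_n)$ were zero we would obtain $\mu(A)=0$, a contradiction. Hence there exists $n_0\in\N$ with $\mu(A\cap E_{n_0})>0$, and since $A\cap E_{n_0}\subset E_{n_0}$, this value also satisfies $\mu(A\cap E_{n_0})\le\mu(E_{n_0})<\infty$.

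Then I would split into two cases according to whether this piece is proper. If $\mu(A\cap E_{n_0})<\mu(A)$, then $B:=A\cap E_{n_0}$ already satisfies $0<\mu(B)<\mu(A)$ and we are finished; observe that this case automatically covers every $A$ with $\mu(A)=\infty$. Otherwise $\mu(A\cap E_{n_0})=\mu(A)$, which forces $\mu(A)<\infty$ because $\mu(A\cap E_{n_0})\le\mu(E_{n_0})<\infty$. In this finite-measure situation I would apply Proposition \ref{prop:purely-nonatomic-finite} directly to $A$: the restricted space $(A,\mu)$ is purely nonatomic, so $A$ cannot be an atom of itself, which again yields a subset of intermediate measure. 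In both cases $A$ is not an atom, and since $A$ was arbitrary, $(E,\mu)$ is purely nonatomic.

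The argument is essentially routine once Proposition \ref{prop:purely-nonatomic-finite} is available; the only point requiring a little care is the dichotomy on the value of $\mu(A)$. The genuinely new content lies in the infinite-measure case, where $\sigma$-finiteness is what lets us slice off a proper subset of positive finite measure, whereas the finite-measure case is inherited immediately from the proposition. I therefore expect no substantial obstacle beyond correctly organizing this case split.
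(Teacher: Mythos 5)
Your proof is correct and takes essentially the same approach as the paper: both fix a countable cover \(E=\bigcup_{n\in\N}E_n\) by finite-measure sets, locate an index with \(\mu(A\cap E_n)>0\), and reduce to Proposition \ref{prop:purely-nonatomic-finite}. The only difference is organizational: the paper avoids your case split by applying the proposition to \(E_n\) itself, so that \(A\cap E_n\) is not an atom and one directly extracts \(B\subset A\cap E_n\) with \(0<\mu(B)<\mu(A\cap E_n)\le\mu(A)\), handling finite and infinite \(\mu(A)\) uniformly.
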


\begin{proof}
	Write \(E=\bigcup_{n\in\N} E_n\) with \(E_n\in\mathcal{B}_\infty\) and \(\mu(E_n)<\infty\).
	If \(A\subset E\) is measurable with \(\mu(A)>0\), then \(\mu(A\cap E_n)>0\) for some \(n\in\N\). Consequently, $0<\mu(E_{n})<\infty$ and, by Proposition~\ref{prop:purely-nonatomic-finite}, \((E_n,\mu)\) is purely nonatomic.
	Then, there exists \(B\subset A\cap E_n\) with \(0<\mu(B)<\mu(A\cap E_n)\le \mu(A)\).
	Thus \(A\) is not an atom, and \((E,\mu)\) is purely nonatomic.
\end{proof}

The third result of this appendix shows that nontrivial continuous functions $f:\R^{\N}\to\R$ under the product topology of $\R^{\N}$ do not belong to $L^{p}(\mu)$ for every $1\leq p <\infty$. This is not the case for the usual Lebesgue measure on $\R^{n}$, $n\in\N$. We start by proving that every nonempty open subset of $\R^{\N}$ has infinite measure.
\begin{theorem}
	\label{A2}
	Every nonempty open subset $U\subset \R^{\N}$ satisfies $\mu(U)=\infty$.
\end{theorem}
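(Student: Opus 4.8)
The plan is to reduce to a basic open set and then run a diagonal argument in the spirit of Proposition \ref{HY}. Since $U$ is nonempty and open in the product topology, it contains a basic open set of the form
$$
V=\bigtimes_{i=1}^{m}U_{i}\times\bigtimes_{i=m+1}^{\infty}\R,
$$
where $m\in\N$ and each $U_{i}\subset\R$ is a nonempty open set (any point $x\in U$ lies in such a $V\subset U$, and then $x_{i}\in U_{i}$ guarantees nonemptiness). As $\mu$ is monotone, it suffices to prove that $\mu(V)=\infty$.

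Next I would show that $V$ admits no countable cover by finite rectangles, so that the infimum defining $\mu^{\ast}(V)$ in \eqref{JCS} is taken over the empty set and therefore equals $\infty$. Suppose, to the contrary, that $V\subset\bigcup_{n\in\N}R_{n}$ with $R_{n}=\bigtimes_{i\in\N}R_{i}^{n}\in\mc{F}(\mc{B},\l)$; as in the proof of Proposition \ref{HY}, membership in $\mc{F}(\mc{B},\l)$ forces $\l(R_{i}^{n})<\infty$ for all $i,n$. Fix $a_{i}\in U_{i}$ for $1\le i\le m$, and for each $n\in\N$ use $\l(R_{m+n}^{n})<\infty<\l(\R)$ to choose a point $y_{m+n}\in\R\setminus R_{m+n}^{n}$. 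This prescribes every coordinate beyond the $m$-th, and I set $y_{i}:=a_{i}$ for $1\le i\le m$, obtaining a point $y=(y_{i})_{i\in\N}\in\R^{\N}$.

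Then $y\in V$, since $y_{i}=a_{i}\in U_{i}$ for $i\le m$ while the remaining factors of $V$ are all of $\R$; but $y\notin R_{n}$ for every $n\in\N$, because its $(m+n)$-th coordinate satisfies $y_{m+n}\notin R_{m+n}^{n}$. This contradicts $V\subset\bigcup_{n}R_{n}$, so no such cover exists and hence $\mu(V)=\mu^{\ast}(V)=\inf\varnothing=\infty$. By monotonicity, $\mu(U)\ge\mu(V)=\infty$, which completes the argument.

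The main obstacle here is more a matter of careful bookkeeping than of depth: one must organize the diagonalization so that the constructed point simultaneously lies in $V$ and escapes every rectangle. The first $m$ coordinates are pinned inside the open factors $U_{i}$, while the infinitely many free coordinates $i>m$ are spent one per rectangle, using the $(m+n)$-th coordinate to dodge $R_{n}$. This is essentially the hyperplane argument of Proposition \ref{HY}, adapted so that finitely many coordinates are constrained rather than just the first.
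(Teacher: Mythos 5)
Your proof is correct, but it takes a genuinely different route from the paper's. You reduce to a basic cylinder $V=\bigtimes_{i=1}^{m}U_{i}\times\bigtimes_{i=m+1}^{\infty}\R$ and show that $V$ admits \emph{no} countable cover by finite rectangles, so that $\mu(V)=\mu^{\ast}(V)=\inf\varnothing=\infty$ directly from \eqref{JCS}; this is the diagonal ``dodging'' argument of Proposition \ref{HY}, adapted so that the first $m$ coordinates stay pinned inside the $U_{i}$ while coordinate $m+n$ is spent escaping the $n$-th rectangle, and the bookkeeping is carried out correctly. The paper argues instead measure-theoretically: after translating so that $0\in U$ (translation invariance), it finds a cylinder $\bigtimes_{n=1}^{n_{0}}(-\d,\d)\times\bigtimes_{n=n_{0}+1}^{\infty}\R\subset U$ and packs into it the pairwise disjoint finite rectangles $\bigtimes_{n=1}^{n_{0}}(-\d,\d)\times\bigtimes_{n=n_{0}+1}^{\infty}[k,k+1)$, $k>n_{0}$, each of measure $(2\d)^{n_{0}}>0$, so that $\sigma$-additivity forces $\mu(U)=\infty$. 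Your argument proves the formally stronger fact that nonempty open sets cannot be covered by countably many finite rectangles at all (infinite measure ``by vacuity,'' exactly as for the hyperplane $H$), and it needs nothing beyond the definition of $\mu^{\ast}$ together with the convention---already invoked by the paper in Proposition \ref{HY}---that membership in $\mc{F}(\mc{B},\l)$ forces every factor to have finite Lebesgue measure. The paper's argument buys something complementary: it avoids any appeal to the $\inf\varnothing=\infty$ clause and to the analysis of arbitrary covers, working only with $\sigma$-additivity and the value of $\mu$ on rectangles, and it exhibits explicit disjoint finite-measure pieces inside $U$, which incidentally shows that open sets, unlike the hyperplane of Proposition \ref{HY}, are far from being atoms.
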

\begin{proof}
	Let $U$ be a nonempty open subset of $\R^{\N}$ and take $x_{0}\in U$. As $\mu$ is translation invariant, we can suppose without lost of generality that $x_{0}=0$. Then, there exists $\varepsilon>0$ such that 
	$B_{\varepsilon}(0)\subset U$, where $B_{\varepsilon}(0)$ is the ball in $\R^{\N}$ with 
	center $0$ and radius $\varepsilon$. By the definition of the product topology, we deduce the existence of $n_{0}\in\N$ and $\d>0$ such that
	$$
	\mathcal{Y}_{n_{0}}:=\bigtimes_{n=1}^{n_{0}}(-\d,\d) \times \bigtimes_{n=n_{0}+1}^{\infty}\mathbb{R} \subset B_{\varepsilon}(0).
	$$
	This implies that
	$
	\mu(U)\geq  \mu(\mc{Y}_{n_{0}})$. Note that for each integer $k>n_0$,
	$$
	\mc{Y}_{k,n_{0}}:=\bigtimes_{n=1}^{n_{0}}(-\d,\d)\times \bigtimes_{n=n_{0}+1}^{\infty}[k,k+1)\subset \mc{Y}_{n_{0}}.
	$$
	Moreover, $\mc{Y}_{k,n_{0}}\in\mc{F}(\mc{B},\l)$ and $\mc{Y}_{k_{1},n_{0}}\cap \mc{Y}_{k_{2},n_{0}}=\varnothing$ for $k_{1}\neq k_{2}$. Consequently, by the $\s$-additivity of $\mu$, we deduce that	
	$$
	\mu(\mc{Y}_{n_{0}})\geq \mu\left( \bigsqcup_{k>n_{0}}\mc{Y}_{k,n_{0}}\right)=\sum_{k=n_{0}+1}^{\infty}\mu(\mc{Y}_{k,n_{0}})=\sum_{k=n_{0}+1}^{\infty}(2\d)^{n_{0}}=\infty.
	$$
	The proof is concluded.
\end{proof}

In the following, we denote by $\mc{C}(\R^{\N})$ the space of continuous functions $f:\R^{\N}\to\R$ under the product topology on $\R^{\N}$.

\begin{theorem}
	Let $f\in\mc{C}(\R^{\N})$ and $x_{0}\in\R^{\N}$ such that $f(x_{0})\neq 0$. Then, $f\notin L^{p}(\mu)$ for every $1\leq p < \infty$. In particular, it holds that $\mc{C}(\R^{\N})\cap L^{p}(\mu)=\{0\}$ for each $1\leq p < \infty$.
\end{theorem}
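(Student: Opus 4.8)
The plan is to deduce the result directly from Theorem~\ref{A2}, which asserts that every nonempty open subset of $\R^{\N}$ has infinite $\mu$-measure. The key observation is that a continuous function which is nonzero at a single point must stay bounded away from zero on an entire open set, and that integrating, over a set of infinite measure, a function bounded below by a positive constant necessarily yields an infinite integral.

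Concretely, I would argue as follows. Suppose $f\in\mc{C}(\R^{\N})$ with $f(x_{0})\neq 0$, and set $c:=|f(x_{0})|/2>0$. Since $|f|$ is continuous, the set $U:=\{x\in\R^{\N}:|f(x)|>c\}$ is open; moreover it is nonempty, as $|f(x_{0})|=2c>c$ gives $x_{0}\in U$. On $U$ we have $|f|^{p}\geq c^{p}$ pointwise, hence
\[
\int_{\R^{\N}}|f|^{p}\,d\mu\geq\int_{U}|f|^{p}\,d\mu\geq c^{p}\,\mu(U).
\]
By Theorem~\ref{A2}, $\mu(U)=\infty$, so $\int_{\R^{\N}}|f|^{p}\,d\mu=\infty$. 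This shows $f\notin L^{p}(\mu)$ for every $1\leq p<\infty$, which is the main assertion.

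The ``in particular'' statement then follows by contraposition: if $f\in\mc{C}(\R^{\N})\cap L^{p}(\mu)$, then $f$ cannot attain a nonzero value at any point (otherwise the previous paragraph would contradict $f\in L^{p}(\mu)$), so $f\equiv 0$; thus $\mc{C}(\R^{\N})\cap L^{p}(\mu)=\{0\}$.

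I do not anticipate a genuine obstacle here, since all the substantive content is already packaged in Theorem~\ref{A2}; the only points deserving a word of care are that continuity of $f$ guarantees both that $U$ is open (being the preimage of an open set under the continuous map $|f|$) and that $U$ is nonempty (containing $x_{0}$), and that $f$, being continuous on $\R^{\N}$ with its product topology, is Borel measurable, so that $\int_{\R^{\N}}|f|^{p}\,d\mu$ is well defined.
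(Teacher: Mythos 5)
Your proposal is correct and follows essentially the same route as the paper: both arguments reduce the statement to Theorem~\ref{A2} by exhibiting a nonempty open set on which $|f|$ is bounded below by a positive constant. The only cosmetic difference is that you use the open superlevel set $\{|f|>c\}$ directly, whereas the paper first translates $x_{0}$ to the origin and works with a ball $B_{\varepsilon}(0)\subset\{|f|>\rho\}$; your variant even spares the (unnecessary) appeal to translation invariance.
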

\begin{proof}
	Let $f\in \mc{C}(\R^{\N})$ and $x_{0}\in\R^{\N}$ be such that $f(x_{0})\neq 0$. As $\mu$ is translation invariant, we can suppose without lost of generality that $x_{0}=0$. Then, there exist $\varepsilon>0$ and $\rho>0$ such that 
	$B_{\varepsilon}(0)\subset\{|f|>\rho\}$.
	Since $f$ is continuous and we are dealing with the Borel $\s$-algebra of $\R^{\N}$, it is apparent that $f$ is measurable. Therefore, 
	$$
	\int_{\R^{\N}}|f|^{p} \; d\mu \geq \int_{B_{\varepsilon}(0)} |f|^{p} \; d\mu > \rho^{p} \mu(B_{\varepsilon}(0))=\infty,
	$$
	where the last equality follows from Theorem \ref{A2}. This concludes the proof.
	\end{proof}

\end{document}